\definecolor{marin}{rgb}   {0.,   0.3,   0.7} 
\definecolor{rouge}{rgb}   {0.8,   0.,   0.} 
\definecolor{sepia}{rgb}   {0.8,   0.5,   0.} 
\newtheorem{lemma}{Lemma}[section]
\newtheorem{theorem}[lemma]{Theorem}
\newtheorem{proposition}[lemma]{Proposition}
\newtheorem{remark}[lemma]{Remark}
\newtheorem{example}[lemma]{Example}
\newtheorem{notation}[lemma]{Notation}
\newtheorem{definition}[lemma]{Definition}
\newtheorem{conclusion}[lemma]{Conclusion}
\numberwithin{equation}{section}
\newcommand{\QED}{\mbox{}\hfill \raisebox{-0.2pt}{\rule{5.6pt}{6pt}\rule{0pt}{0pt}} 
          \medskip\par}
\newcommand{\eps}{\varepsilon}
\newcommand{\dd}{\mathrm{d}}
\newcommand{\gs}{\mathfrak{g}}
\newcommand{\Hc}{\mathcal{H}}
\newcommand{\R}{\mathbb{R}}
\newcommand{\T}{\mathbb{T}}
\newcommand{\Lc}{\mathcal{L}}
\newcommand{\Z}{\mathbb{Z}}
\newcommand{\ssf}{\mathsf{s}}
\newcommand{\zs}{\mathfrak{Z}}
\newcommand{\Norm}[2]{\|#1\|\left.\vphantom{T_{j_0}^0}\!\!\right._{#2}}
\author[E. Faou]{Erwan Faou}
\address{INRIA-Rennes Bretagne Atlantique and IRMAR (UMR 6625) Universit\'e de Rennes I} 
\email{Erwan.Faou@inria.fr}
\author[R. Horsin]{Romain Horsin}
\address{INRIA-Rennes Bretagne Atlantique and IRMAR (UMR 6625) Universit\'e de Rennes I} 
\email{Romain.Horsin@inria.fr}
\author[F. Rousset]{ Fr\'ed\'eric Rousset }
\address{ Laboratoire de Math\'ematiques d'Orsay (UMR 8628) Universit\'e Paris-Sud et Institut Universitaire de France}
  \email{frederic.rousset@math.u-psud.fr}
\thanks{This work is partially supported by the ERC starting grant GEOPARDI No. 279389}
\title[Numerical Landau damping]
{ On numerical Landau damping for splitting methods applied to the Vlasov-HMF model }
\begin{document}

\begin{abstract}
We consider time discretizations of the Vlasov-HMF (Hamiltonian Mean-Field) equation based on splitting methods between the linear and non-linear parts. 
We consider solutions starting in a small Sobolev neighborhood of a  spatially homogeneous state  satisfying
 a linearized stability criterion (Penrose criterion). 
We prove that   the numerical solutions exhibit  a scattering behavior to a modified state, which implies a nonlinear Landau damping effect with polynomial rate of damping. Moreover, we prove that the modified state is close to the continuous one and provide error estimates with respect to the time stepsize. 
\end{abstract}

\subjclass{ 35Q83, 35P25 }
\keywords{Geometric Numerical Integration, Splitting methods, Vlasov equations, Landau damping, HMF model}
\thanks{
}

\maketitle

\section{Introduction}

In this paper we consider time discretizations of the Vlasov-HMF model. This model has received much interest in the physics and mathematics
litterature for many reasons:  It  is a simple ideal toy model that keeps several features of the long range interactions,  it is 
a simplification of physical systems like charged or gravitational sheet models and it is rather easy to make numerical simulations on it. We refer for example  to \cite{Bouchet1}, \cite{Bouchet2}, \cite{Bouchet3}, \cite{Caglioti-Rousset1}, \cite{Caglioti-Rousset2}   for more details. 
This model also has strong analogy with the Kuramoto model of coupled oscillators in its  continuous limit \cite{Gerard-Varet}, \cite{Caglioti}, \cite{Dietert}. 
A  long time analysis of the Vlasov-HMF  model around homogenous stationary states has been recently  performed in \cite{RF} where a Landau damping result is proved in Sobolev regularity. The purpose of the present paper is in essence to show that this result persists through time discretization by splitting methods.  

 The Vlasov-HMF model reads
\begin{equation}
\label{eq:hmf1}
\partial_t f (t,x,v)+  v \partial_x f (t,x,v) =  \partial_{x}
 \Big(\int_{\R \times \mathbb{T}} P(x-y) f(t,y,u) \dd u \dd y \Big) \partial_v f(t,x,v), 
\end{equation}
where $(x,v) \in \T \times \R$ and  the kernel $P(x)$ is given by  $P(x) = \cos(x)$. We consider  initial data under the form $ f_{0}(x,v)= \eta(v) + \eps r_{0}(x,v)$
 where $\eps$ is a small parameter and $r_{0}$ is of size one (in a suitable functional space). This means that we study small perturbations of a stationary solution $\eta(v)$.
Writing the exact solution as 
$$
f(t,x,v) = \eta(v) + \varepsilon r(t,x,v), 
$$
and setting 
\begin{equation}
\label{cimarosa}
g(t,x,v) = r(t,x + tv,v),
\end{equation}
the main result given in \cite{RF} is that if $\varepsilon$ is small enough,  $g(t,x,v)$ converges towards some  $g^\infty(x,v)$ when $t$ goes to $ \infty$ in Sobolev regularity. This results implies  a Landau damping phenomenon for the solution. 

In this paper, we consider the time discretization of \eqref{eq:hmf1} by splitting methods based on the decomposition of the equation between the free part
\begin{equation}
\label{transport}
\partial_t f (t,x,v)+  v \partial_x f (t,x,v) = 0, \quad f(0,x,v) = f^0(x,v),
\end{equation}
whose solution is given explicitely by $\varphi_T^t(f^0)(x,v) := f^0(x - tv,v)$, and the potential part 
\begin{equation}
\label{boston}
\partial_t f (t,x,v) =  \partial_{x}
 \Big(\int_{\R \times \mathbb{T}} P(x-y) f(t,y,u) \dd u \dd y \Big) \partial_v f(t,x,v), \quad f(0,x,v) = f^0(x,v),
\end{equation}
whose solution is explicitely given by 
$$
\varphi_P^t(f^0) = f^0(x,v + t E(f^0,x)),
$$
where $E(f,x) = \partial_{x}
 \Big(\int_{\R \times \mathbb{T}} P(x-y) f(y,u) \dd u \dd y \Big)$ is indeed kept constant during the evolution of \eqref{boston}. 

 The Lie splittings we consider are given by the formulas
\begin{equation}
\label{lie1}
f^{n+1}=\varphi_{P}^{h} \circ \varphi_{T}^{h}(f^{n}),
\quad
\mbox{or}
\quad
f^{n+1}=\varphi_{T}^{h} \circ \varphi_{P}^{h}(f^{n}),
\end{equation}
where $h >0$ is the time step. 
The functions $f^n(x,v)$ defined above are  {\em a priori} order one approximations of $f(t,x,v)$ at time $t = nh$. 

We  also consider the Strang splitting 
 \begin{equation}
 \label{strang}
 f^{n+1} = \varphi_T^{h /2} \circ \varphi_P^h  \circ \varphi_T^{h /2} (f^n)
 \end{equation}
that should provide an order two approximation $f^{n}(x,v)$ of $f(t,x,v)$ at time $t=nh$ (the same being expected for the symmetric splitting where the roles of $T$ and $P$ are swapped).\\

We can then define the sequence of function $r^n(x,v)$ by the formula
\begin{equation}
\label{ravel}
f^n(x,v) = \eta(v) + \varepsilon r^n(x,v), 
\end{equation}
and the functions 
\begin{equation}
\label{cimarosan}
g^n(x,v) = r^n(x + n h  v,v)
\end{equation} 
which have to be thought as approximations of $g(t,x,v)$ at time $t = n h $. 

The main result of our paper is that if $\varepsilon$ and $h $ are small enough, $g^n(x,v)$ converges towards a limit function $g_h ^{\infty}(x,v)$ when the $n$ goes to $\infty$. Moreover, this solution is close to the exact limit function $g^\infty(x,v)$ with an error estimate that scales in $h $ for the Lie splitting, and in $h^{2}$ for the Strang splitting. 
Note that our results also imply  convergence results in time   which are  uniform for positive times for 
$g^n(x,v)$ and give explicit convergence bounds for $f^n(x,v)$  in $\Hc^s$ (Sobolev space, see \eqref{defsob} below) that depend on the final time $T$ in a polynomial way. 

The main idea of our proof can be compared  with the classical {\em backward error analysis} methods widely used in Geometric Numerical Integration, see for instance \cite{HLW}, \cite{Reic04}: we express the numerical solution as the exact solution of a continuous Vlasov type  equation with time dependent kernel (with a poor regularity in time). Usually for Hamiltonian systems, the analysis has to be refined to make this equation independent of the time, implying the existence of a modified energy that is preserved by the numerical scheme. This ``time averaging" introduces in general a remaining error term which is exponentially small (with respect to the time step) for finite dimensional systems (see \cite{BG94}, \cite{HLW}, \cite{Reic04}, \cite{Reich99}) or requires the use of a CFL (Courant-Friedrichs-Lewy) condition for semilinear Hamiltonian equations  to be controlled (see \cite{FG11},\cite{F11}). 

Here the situation is completely different. The long time behavior of the solution is essentially controlled by a time convergent integral, which is a consequence of the dispersive effect of the free flow and ensures the existence of the continuous limit function $g^{\infty}(x,v)$ (The {\em Landau damping} effect, see \cite{Mouhot-Villani}, \cite{Bedrossian-Masmoudi-Mouhot}, \cite{RF}). As we will observe in the next section, the effect of the splitting approximation is essentially to discretize this convergent integral. As the integrand converges algebraically when the time goes to infinity, the numerical solution also yields a convergent time integral, even if the time appears in a discontinuous way in the evolution equation. 

The proof of the uniform convergence estimates is based on a similar argument, but requires slightly more regularity  for the functions than for the continuous case. 

\section{Landau damping for the Vlasov-HMF model, main result}

Before stating our main result, we first recall the scattering result derived in \cite{RF} (see also \cite{Mouhot-Villani}, \cite{Bedrossian-Masmoudi-Mouhot} for similar result with analytic or Gevrey regularity that are valid for much more singular interaction potentials).

We work in the following weighted Sobolev spaces: for a given
$\nu > 1/2,$ we set 
\begin{equation}
\label{defsob}
\Norm{f}{\Hc^s_{\nu}}^2 = \sum_{|p| + |q| \leqslant s} \int_{\T \times \R} (1 + |v|^2)^{\nu} | \partial_x^p \partial_v^q f|^2 \dd x \dd v , 
\end{equation}
and we shall denote by $\Hc_{\nu}^s$ the corresponding function space. We  shall  denote  by $\hat \cdot$ or $\mathcal{F}$ the Fourier transform  on $\mathbb{T} \times \mathbb{R}$  given by  
\begin{equation}
\label{fourier}
\hat f_k(\xi) = \frac{1}{2 \pi} \int_{\T \times \R} f(x,v) e^{-ikx - i \xi v} \dd x \dd v.
\end{equation}
 We shall need a stability property of the reference state $\eta$ in order to control the linear part of the Vlasov equation \eqref{eq:vp3}.
 Let us denote by $\eta = \eta(v)$ the  spatially homogeneous stationary state and let us define the functions 
\begin{equation}
\label{kernel}
K(n,t)= - np_{n}\,  nt \,   \hat{\eta}_{0} (nt), \quad  K_{1}(n,t)= - np_{n}\,  nt \,   \hat{\eta}_{0} (nt) \mathds{1}_{t \geq 0}, \quad t \in \mathbb{R}, \quad n \in \Z,
\end{equation}
where $(p_{k})_{k \in \Z}$ are the Fourier coefficients of the kernel $P(x)$. We shall denote by ${\hat{K}_{1}(n,\tau )}= {\int_{\mathbb{R}} e^{-i \tau  t} K_{1}(n,t) \, \dd t}$ the Fourier
transform of $K_{1}(n, \cdot)$. 
We shall assume that  $\eta$ satisfies the following condition 
\begin{equation}
\nonumber
{\bf (H)}  \quad    \eta(v) \in \Hc^5_{3} \quad  \mbox{and } \quad  \exists\,  \kappa>0, \quad   \inf_{ \mathrm{Im}\, \tau  \leq 0 }  | 1-  \hat{K}_{1}(n,\tau ) | \geq \kappa, \quad n= \pm1.
\end{equation}
Note that  thanks to the localization property of $\eta$ in the first part of the assumption, the Fourier transform of $K$ can be indeed continued in the half plane
  $\mathrm{Im}\, \tau  \leq 0$.
 Here, the assumption is particularly simple due to the fact that for our kernel, there are only two non-zero Fourier
 modes.
This assumption is very similar to the one used in \cite{Mouhot-Villani}, \cite{Bedrossian-Masmoudi-Mouhot} and can be related to the standard  statement of the Penrose criterion.
 In particular it is verified  for the states $\eta(v)= \rho(|v|)$ with $\rho$ non-increasing which  are also known to be Lyapounov stable for the nonlinear equation
 (see \cite{Marchioro-Pulvirenti}).

We also use the notation $\langle x \rangle = (1 + |x|^2)^{1/2}$ for $x \in \R$. 
In \cite{RF}, the following result is proved: 
\begin{theorem}
\label{maintheo}
Let us fix $s \geq 7$, $\nu >1/2$ and assume that $\eta\in \Hc^{s+4}_{\nu}$ satisfies the assumption ${\bf (H)}$. Assume that $g(0,x,v)$ is in $\Hc^s_{\nu}$.  Then there exists $\eps_{0}>0$ and a constant $C>0$ such that for every $\eps \in (0, \eps_{0}]$  there exists $g^\infty(x,v) \in \Hc^{s-4}_{\nu}$ such that for all $r \leq s - 4$ and $r \geq 1$, 
\begin{equation}
\label{scat}
\forall\, t \geq 0, \quad \Norm{g(t,x,v) - g^{\infty}(x,v)}{\Hc^{r}_{\nu}}  \leq \frac{C}{\langle t \rangle^{s - r-3}}. 
\end{equation}
\end{theorem}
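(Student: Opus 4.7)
The plan is to work in the transport-adapted profile variables $g(t,x,v)=r(t,x+tv,v)$, where the dispersive decay of the free flow becomes explicit. A direct computation gives
$$\partial_t g(t,x,v) = E(t,x+tv)\big(\eta'(v) - \varepsilon t\,\partial_x g(t,x,v) + \varepsilon\,\partial_v g(t,x,v)\big),$$
where the self-consistent field is $E(t,y)=\sum_{n} inp_n\,\hat\rho_n(t)\,e^{iny}$ and the macroscopic density admits the key representation
$$\hat\rho_n(t)=\int r(t,x,u)e^{-inx}\,\dd x\,\dd u = \hat g_n(t,nt),$$
that is, a trace of $\hat g_n(t,\xi)$ along the ray $\xi=nt$. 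Because $P(x)=\cos x$ retains only the Fourier modes $n=\pm1$, the field collapses to these two modes and the non-local coupling between different $k$'s that plagues general Vlasov systems disappears.

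First I would derive a closed Volterra equation for $\hat\rho_n$, $n=\pm1$. Integrating the equation for $g$ in $t$ and evaluating along $\xi=nt$ yields, after sorting linear and nonlinear terms,
$$\hat\rho_n(t) = F_n(t) + \int_0^t K(n,t-s)\,\hat\rho_n(s)\,\dd s + \Nc_n[\hat g](t),$$
with $K(n,\cdot)$ exactly the kernel of \eqref{kernel}, $F_n$ a source built from $g(0,\cdot)$ which decays like $\langle t\rangle^{-(s+1)}$ by integration by parts using $g(0,\cdot)\in\Hc^s_\nu$, and $\Nc_n$ an $O(\varepsilon)$ quadratic remainder. The Penrose hypothesis {\bf (H)} then asserts that the Fourier-Laplace symbol $1-\hat{K}_1(n,\tau)$ is bounded below on $\{\mathrm{Im}\,\tau\le 0\}$, so that $I-K_1\ast$ is boundedly invertible on polynomially weighted spaces. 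This inversion gives $|\hat\rho_n(t)|\lesssim \langle t\rangle^{-(s+1)}$ at the linear level, and a contraction argument propagates the same bound into the nonlinear problem for small $\varepsilon$.

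Next I would close a bootstrap in $\Hc^s_\nu$ for $g$ itself. Using Duhamel on $\hat g_k(t,\xi)$ in the profile variables, every factor of $\partial_x$ on $g$ produces a factor $kt$ that must be paid for with one derivative of $\eta$ evaluated at $\xi-nt$; the hypothesis $\eta\in\Hc^{s+4}_\nu$ provides enough of these. The resulting source is dominated by $\varepsilon$ times $\sum_{n=\pm1}\langle\tau\rangle^{\text{poly}}|\hat\rho_n(\tau)|$, and thanks to the short-range nature of $P=\cos$ no echo cascade couples high modes to low ones. Coupling this a priori estimate with the Volterra inversion gives simultaneously $\Norm{g(t)}{\Hc^s_\nu}\lesssim 1$ and the decay $|\hat\rho_n(t)|\lesssim\langle t\rangle^{-(s+1)}$, first on a short interval and then globally by continuation.

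Scattering follows from a telescoping argument: for $t_2>t_1$ one has
$$\Norm{g(t_2)-g(t_1)}{\Hc^r_\nu} \le \int_{t_1}^{t_2}\Norm{\partial_t g(\tau)}{\Hc^r_\nu}\,\dd\tau \lesssim \int_{t_1}^{t_2}\langle\tau\rangle^{r}\sum_{n=\pm 1}|\hat\rho_n(\tau)|\,\dd\tau,$$
where the weight $\langle\tau\rangle^{r}$ records the $r$ losses incurred in commuting derivatives through the change of variables $g\mapsto r$. Combining with $|\hat\rho_n(\tau)|\lesssim \langle\tau\rangle^{-(s+1)}$ and the additional loss of four derivatives paid to pass twice between $g$ and $\rho$ produces a bound of order $\langle t_1\rangle^{-(s-r-3)}$; hence $g(t)$ is Cauchy in $\Hc^r_\nu$ for every $r\le s-4$, yielding $g^\infty\in\Hc^{s-4}_\nu$ with the stated rate. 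The main obstacle is the nonlinear bootstrap itself: the polynomially time-growing weights must be sharp enough that the Volterra inversion returns, through $\hat\rho_n$, the very same decay that the a priori estimate for $g$ required as input, and the advertised loss $s\to s-4$ is the bookkeeping footprint of precisely this circularity, which in Sobolev regularity is affordable only because the two-mode kernel $P=\cos$ forbids the echo cascade of the general Vlasov theory.
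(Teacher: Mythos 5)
Your skeleton is correct and mirrors the strategy of the reference proof in \cite{RF}: pass to profile variables, observe that $\hat\rho_n(t)=\hat g_n(t,nt)$ is the trace of the profile along the ray $\xi=nt$ (this is $\zeta_n(t)$ in the paper's notation), derive a Volterra equation with kernel $K(n,\cdot)$, invert it under the Penrose condition ${\bf (H)}$, close a bootstrap in weighted Sobolev norms, and finally telescope $\int_t^\infty \partial_\tau g\,\dd\tau$ to get scattering. However, there is a genuine structural gap in your bootstrap, plus quantitative errors whose arithmetic does not close.

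The gap: you assert that the bootstrap yields $\Norm{g(t)}{\Hc^s_\nu}\lesssim 1$ uniformly in time. This is not achievable and is not what the approach gives. In the energy estimate for the $\Hc^s_\nu$ norm the commutator $[D^{m,p,q},\Lc_t[g]]$ produces a source proportional to $m_{t,s+1}(\zeta)=\langle t\rangle^{s+1}|\zeta(t)|$ (see Lemma \ref{lemcom}, estimate \eqref{com1}), and with the correct decay $|\zeta(t)|\lesssim\langle t\rangle^{-(s-1)}$ one gets a source $\sim\langle t\rangle^2$ which, once integrated, forces $\Norm{g(t)}{\Hc^s_\nu}$ to grow like $\langle t\rangle^3$. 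The actual argument therefore does \emph{not} bound $\sup_t\Norm{g(t)}{\Hc^s_\nu}$; it controls the three-component functional $Q_{T,s,\nu}(g)=N_{T,s,\nu}(g)+M_{T,s-1}(\zeta)+\sup_{[0,T]}\Norm{g}{\Hc^{s-4}_\nu}$, where $N_{T,s,\nu}$ explicitly allows $\langle t\rangle^3$ growth of the top norm while the $\Hc^{s-4}_\nu$ norm is uniformly bounded. This split is not bookkeeping decoration; it is precisely what lets the Volterra input and the energy output match. Your version, which expects a uniform $\Hc^s_\nu$ bound, would not close.

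Two quantitative errors compound this. First, your claimed decay $|\hat\rho_n(t)|\lesssim\langle t\rangle^{-(s+1)}$ is off by two orders: the source $F_n(t)=\hat g_n(0,nt)$ satisfies $|\hat g_n(0,nt)|\lesssim\langle nt\rangle^{-(s-1)}$ because Lemma \ref{lememb} costs one power to localize in Fourier angular mode $n=\pm1$, so the Volterra inversion returns only $M_{T,s-1}(\zeta)<\infty$, i.e.\ $|\zeta(t)|\lesssim\langle t\rangle^{-(s-1)}$. Second, your telescoping computation is inconsistent with your own claimed rate: with $|\hat\rho_n|\lesssim\langle\tau\rangle^{-(s+1)}$ you would get $\int_{t}^\infty\langle\tau\rangle^{r-(s+1)}\dd\tau\lesssim\langle t\rangle^{r-s}$, which is \emph{stronger} than the target $\langle t\rangle^{-(s-r-3)}$, so "the additional loss of four derivatives" invoked to repair the exponent is not a computation but a gloss. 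The actual derivation requires applying \eqref{com2} at order $r$, using $m_{\sigma,r+1}(\zeta)\lesssim\langle\sigma\rangle^{r+1-(s-1)}$, bounding $\Norm{g(\sigma)}{\Hc^{r+1}_\nu}$ by interpolating between the uniformly bounded $\Hc^{s-4}_\nu$ norm and the $\langle\sigma\rangle^3$-growing $\Hc^s_\nu$ norm, and then integrating in $\sigma$; this is where $s\mapsto s-4$ and the exponent $s-r-3$ come from.
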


In this paper, we prove the following semi-discrete version of the previous result:  
\begin{theorem}
\label{debussy}
Let us fix $s \geq 7$, $\nu>1/2$ and assume that $\eta\in \Hc^{s+4}_{\nu}$ satisfies the assumption ${\bf (H)}.$ Assume that $g(0,x,v)$ is in $\Hc^s_{\nu}.$ For a time step $h $,  let $g^n(x,v)$, $n \geq 0,$ be the sequence of functions defined by the formula \eqref{cimarosan} from iterations of the splitting methods \eqref{lie1} (Lie), or \eqref{strang} (Strang),  with $g^0(x,v) = g(0,x,v)$.   Then there exists $\eps_{0}>0$, $h _0 > 0$ and a constant $C > 0$ such that for every $\eps \in (0, \eps_{0}]$ and every $h  \in (0,h _0]$, there exists $g_h ^\infty(x,v) \in \Hc^{s-4}_{\nu}$ such that for all $r \leq s - 4$ and $r \geq 1$, 
\begin{equation}
\label{scatnum}
\forall\, n \geq 0, \quad \Norm{g^n(x,v) - g_h ^{\infty}(x,v)}{\Hc^{r}_{\nu}}  \leq \frac{C}{\langle nh \rangle^{s - r-3}}. 
\end{equation}
If moreover $\nu >3/2$  and $s \geq 8$,  we have for the Lie splitting methods \eqref{lie1} the estimate
\begin{equation}
\label{convergence1}
\Norm{g^n(x,v) - g(nh ,x,v)}{\Hc^{s-6}_{\nu-1}}  \leq C  h  \hspace{3mm} \forall n\in \mathbbm{N},
\end{equation}
where $g(t,x,v)$ is the solution \eqref{cimarosa} associated with the continuous equation with the same initial value.\\
In the case of the Strang splitting method \eqref{strang}, we have if $\nu >5/2$ and $s \geq 9$ the  second order estimate
\begin{equation}
\label{convergence}
\Norm{g^n(x,v) - g(nh ,x,v)}{\Hc^{s-7}_{\nu-2}}  \leq C  h^{2}  \hspace{3mm} \forall n\in \mathbbm{N}.
\end{equation}
\end{theorem}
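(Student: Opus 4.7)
The plan is to transpose the continuous proof of Theorem \ref{maintheo} from \cite{RF} to the discrete scheme. First, I would substitute the ansatz $f^n = \eta + \varepsilon r^n$ into \eqref{lie1} (or \eqref{strang}), rewrite everything in the filtered variable $g^n(x,v) = r^n(x + nhv, v)$, and Fourier transform in $(x,v)$. After expanding the nonlinear part of each splitting step, one obtains a recursion for $\hat{g}^n_k(\xi)$ of discrete Volterra type:
$$
\hat{g}^{n+1}_k(\xi) \;=\; \hat{g}^n_k(\xi) \;+\; h\sum_{m=0}^{n} \Kc_h(k,n - m)\,\hat{g}^m_k(\xi + k(n-m)h) \;+\; h\,\Nc^n_k(\xi),
$$
where $\Kc_h$ is a Riemann-type (respectively midpoint-type for Strang) discretization of the continuous kernel $K$ of \eqref{kernel}, and $\Nc^n_k(\xi)$ gathers remainders that are formally quadratic in $g^n$. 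This mirrors the integral equation driving the continuous analysis of \cite{RF}.

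Next, I would establish uniform-in-$n$ Sobolev bounds. The analytical core is to transfer the Penrose hypothesis \textbf{(H)} to the discrete kernel: after a $Z$-transform in $n$, the associated symbol converges to $\hat{K}_1(n,\tau)$ as $h \to 0$, so for $h$ small enough one keeps a lower bound $\inf_{\mathrm{Im}\,\tau \leq 0}|1 - \widehat{\Kc_h}(k,\tau)| \geq \kappa/2$, yielding $\ell^2$-in-$n$ control of the linearized discrete Volterra equation. Combined with the polynomial decay of $K(k,t) = -kp_k \cdot kt\,\hat\eta_0(kt)$ (integrable thanks to $\eta \in \Hc^{s+4}_\nu$) and a bootstrap in $\varepsilon$, this yields $\Norm{g^n}{\Hc^s_\nu} \leq C\varepsilon$ uniformly in $n$. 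Once this is in hand, the scattering statement \eqref{scatnum} follows by rewriting $g^{n+1} - g^n$ as $h$ times an integrand inheriting the decay $\langle nh \rangle^{-(s-r-2)}$ from the kernel and summing the tail, exactly as in the continuous case; the limit defines $g_h^\infty$.

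The main obstacle is the order estimate versus the continuous solution, namely \eqref{convergence1}--\eqref{convergence}. I would set $e^n := g^n - g(nh,\cdot)$ and derive for $e^n$ a perturbed discrete Volterra equation whose source is the local truncation error of the splitting applied to the integral formulation. Uniformity in $n$ is the delicate point: the number of steps is unbounded, so naive telescoping would blow up. The decisive observation is that the time integrals involved are already convergent thanks to the kernel decay, so the consistency error can be summed globally and is controlled by a single factor $h$ (Lie) or $h^2$ (Strang) times a convergent series. The losses $\nu \to \nu-1$, $s\to s-6$ for Lie, and $\nu\to\nu-2$, $s\to s-7$ for Strang, arise from Taylor expanding the shifted Fourier arguments $\xi + kmh$ inside $\hat{g}^m_k$ and the shifted velocity $v + hE$ inside $\eta$, each order in $h$ costing one derivative and one power of $v$. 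The extra factor of $h$ for Strang will require exploiting the time-symmetry of the midpoint rule, which kills the odd-order terms in the Taylor expansion of the nonlinear correction, in the classical spirit of backward error analysis \cite{HLW}; verifying these cancellations uniformly in $n$, and compatibly with the discrete Penrose estimate, is what I expect to be the technical heart of the argument.
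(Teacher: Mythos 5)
Your plan and the paper's proof share the backward-error-analysis philosophy and the broad outline (Volterra structure for the density, Penrose bound for the linear part, weighted Sobolev bootstrap in $\varepsilon$, midpoint cancellation for Strang), but they diverge substantially at the key linear step. The paper never forms a discrete Volterra equation: Proposition~\ref{klanski} embeds the sequence $g^n$ into a genuinely continuous-in-time solution $\gs(t)$ of a modified kernel equation in which the time argument of $\phi$ is replaced by the piecewise-constant function $\ssf_h(t)$. The diagonal quantity $\zs_n(t)=\hat\gs_n(t,n\ssf(t))$ then satisfies the \emph{same} continuous-time Volterra equation with the \emph{same} kernel $K$ of \eqref{kernel}, up to an additive perturbation $G_n(t)$ which is shown to be $O(h)$ in the $M_{T,s-1}$ norm and absorbed for $h$ small (end of the proof of Proposition~\ref{propzeta}). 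In this way Lemma~\ref{lemvolterra} can be invoked with the unmodified hypothesis ${\bf (H)}$, and no discrete version of the Penrose criterion is ever needed. You instead propose a fully discrete recursion in $n$ and a transfer of ${\bf (H)}$ via a $Z$-transform of a discretized kernel $\Kc_h$. This is a legitimate alternative in spirit, but it is technically heavier: you would need to show that the $Z$-transform symbol stays uniformly bounded away from $1$ on the closed unit disc for all $h\le h_0$, including near the Nyquist scale where the Riemann-sum approximation of $\hat K_1$ is no longer obviously close; you would also need a discrete Paley--Wiener argument to justify causality. The paper's embedding trick sidesteps all of this, at the modest cost of comparing $K(n,\ssf(t)-\ssf(\sigma))$ to $K(n,t-\sigma)$, which your write-up does not anticipate.

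One substantive inaccuracy in the bootstrap step: you assert that one obtains $\|g^n\|_{\Hc^s_\nu}\le C\varepsilon$ uniformly in $n$. This is not what holds, and it is not what the continuous theory in \cite{RF} gives either. Only the lower-order norm $\|\gs(t)\|_{\Hc^{s-4}_\nu}$ is uniformly bounded; the full $\Hc^s_\nu$ norm is only controlled after dividing by $\langle t\rangle^3$, i.e.\ via $N_{T,s,\nu}$ in \eqref{eq:NetM}, and may grow polynomially. This graded structure is not cosmetic: the splitting of the source integrals into $\int_0^{t/2}$ and $\int_{t/2}^{t}$ in Proposition~\ref{propzeta} (and repeatedly afterwards) exists precisely because one must trade high regularity for time decay on one half of the integral and vice versa on the other. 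A bootstrap based on a claimed uniform $\Hc^s_\nu$ bound would not close. The remedy is exactly the definition of $Q_{T,s,\nu}$ in \eqref{Qdef}; you should build your discrete bootstrap around that quantity or its discrete analogue.

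The remaining steps of your sketch (definition of $g_h^\infty$ via a convergent tail, $\delta=g-\gs$ analysis, Taylor expansion in the shifted Fourier arguments causing the $\nu$- and $s$-losses, midpoint cancellation for the $h^2$ rate) are consistent with the paper's Sections~4--6, modulo the preceding remarks. The derivative/weight accounting you announce (one derivative and one $v$-weight per order in $h$) is a reasonable first guess but does not exactly match the final exponents ($s-6,\nu-1$ for Lie and $s-7,\nu-2$ for Strang relative to the $s-4$ baseline from the scattering limit); the extra losses come from the interpolation and energy manipulations in Propositions~\ref{propgsg-5}, \ref{propgsg}, \ref{propgsg-7}, \ref{propgsg2}, not solely from the Taylor expansion of the shifted argument.
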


Let us make the following comments: 

\medskip 
 {\bfÊa) }ÊThe estimates \eqref{convergence1} and \eqref{convergence} exhibit a convergence rates in time of order $1$ and $2$ respectively for the numerical solutions. These estimates hold  {\em uniformly in time}. Note however that these results do not imply convergence results uniform in time for the functions $f^n(x,v)$ to $f(nh ,x,v)$ given by the splitting methods \eqref{lie1} and \eqref{strang}. It is easy to check, using the formula $f^n(x,v) = \eta(v) + \varepsilon g^n(x - n h  v,v)$ that we have an estimate of the form 
\begin{equation}
\notag
\begin{split}
\forall\, n \geq 0, \quad \Norm{f^n(x,v) - f(nh ,x,v)}{\Hc^{s-6}_{\nu-1}}  \leq C \varepsilon \langle nh  \rangle^{s-6} h.  \\
\end{split}
\end{equation}
for the Lie splitting methods \eqref{lie1}. In the case of the Strang splitting \eqref{strang}, we have from the same arguments:
\begin{equation}
\notag
\begin{split}
\forall\, n \geq 0, \quad \Norm{f^n(x,v) - f(nh ,x,v)}{\Hc^{s-7}_{\nu-2}}  \leq C \varepsilon \langle nh  \rangle^{s-7} h ^2. \\
\end{split}
\end{equation}
Hence we obtain convergence results which are global in time only if we measure the error in $L^2$. If we measure the error in $H^\sigma$, $\sigma>0$,
then for a fixed time horizon $n h  \leq T$, the error grows like $T^\sigma$. This is however better than the rough $ e^T,$ estimate that is usually obtained through Gronwall type arguments (Note that convergence results can be found in \cite{Einkemmer1} for the case of compactly supported data, and in \cite{casas} for the Vlasov-Poisson case). 
\\
Let us also mention as an easy consequence of \eqref{scat}, \eqref{scatnum}, \eqref{convergence1} and \eqref{convergence}, that the following estimates hold for the limit state of the equation: For the Lie splitting
 \begin{equation}
 \notag
 \begin{split}
&\| g^{\infty}(x,v)-g^{\infty}_{h}(x,v) \|_{\Hc^{s-6}_{\nu-1}} \leq Ch, \\
\end{split}
\end{equation}
and for the Strang splitting
 \begin{equation}
 \notag
 \begin{split}
&\| g^{\infty}(x,v)-g^{\infty}_{h}(x,v) \|_{\Hc^{s-7}_{\nu-2}} \leq Ch^{2}.\\
\end{split}
\end{equation}

\medskip 
{\bf b)} The long time behavior of the exact solution \eqref{cimarosa} is essentially controlled by a time convergent integral (see \cite{RF}). We shall see (Proposition \ref{klanski} below) that the splitting method provides a discretization of this integral, but essentially without changing the decay in time of the integrand. Thus the numerical solution also yields a convergent time integral, even if the time appears in a discontinous way, giving us the long time behavior.\\
Moreover, this discretization is performed by rectangle methods in the case of Lie splittings, and by the midpoint rule in the case of Strang splitting. Estimates \eqref{convergence1} and \eqref{convergence} reflect the respective accuracies of these two methods. The second order estimate requires a more refined analysis than the first order, for it is obtained by tracking the cancellations provided by the midpoint rules. We mention that this result remains true for Strang splitting of the form \eqref{strang} where the role of $T$ and $P$ are exchanged but the complete proof is given for \eqref{strang} only (the time integration rule being the trapezoidal rule and the arguments identical for both cases).   \\
Finally, let us mention that the proofs of the convergence results (for Lie or Strang) widely use the long time behavior of both the exact and discrete solutions, in particular uniform bounds on their regularity. This can be understood as stability results for the numerical schemes. The convergence results are essentially the combination of these stability results, and the accuracy of the discretization of the integral.

\medskip 
{\bf c)} Our results hold only for time discretization of the equation. Fully discrete scheme including for example an interpolation procedure at each step (semi-Lagrangian methods) traditionally exhibit {\em recurrence} phenomena due the discretization in the $v$ variable. Indeed, the Landau damping effect  reflects essentially the fact that the solution of the free Vlasov equation is a superposition of travelling wave in the Fourier variable $\xi$. At the discrete level, the $\xi$ variable is only discretized by a finite number of points which causes numerical interactions of these travelling waves preventing the mixing effect to occur for very long time. Typically,  the previous result is hence valid {\em a priori} for a time of order $\mathcal{O}(1/\delta v)$ only, if $\delta v$ is the size of the mesh variable in $v$. Solutions exist to remedy these difficulties, for examples by putting absorbers in the Fourier spaces, see for instance \cite{Einkemmer2}. The analysis of these space discretization effects will be the subject of further studies.

\medskip 

Let us finally explain how the previous  scattering results imply Landau damping effects for the solution $f(t,x,v)$. Let us recall the following elementary Lemma: 

\begin{lemma}
\label{lememb}
For every $\alpha, \, \beta,\,  \gamma,  \,  s \in \mathbb{N}$ with  $\alpha + \beta = s$, and $\gamma< \nu - {1 \over 2}$. we have the following inequality: 
\begin{equation}
\label{emb1}
\forall\, k \in \Z, \quad \forall\, \xi \in \R, \quad 
|\partial_\xi^\gamma\hat{f}_k(\xi)| \leqslant 2^{s/2}C(\nu) \langle k\rangle^{- \alpha} \langle \xi\rangle^{- \beta} \Norm{f}{\Hc^{s}_{\nu}},
\end{equation}
where $C(\nu)$ depends only on $\nu>1/2$.  
\end{lemma}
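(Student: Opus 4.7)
The plan is to express $\partial_\xi^\gamma \hat f_k(\xi)$ as the Fourier transform of $(-iv)^\gamma f$, produce the polynomial factors $\langle k\rangle^\alpha$, $\langle \xi\rangle^\beta$ by integration by parts, and then apply Cauchy-Schwarz against the weight $(1+v^2)^\nu$ appearing in the $\Hc^s_\nu$ norm.

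Concretely, differentiating \eqref{fourier} under the integral sign gives
$$\partial_\xi^\gamma \hat f_k(\xi) = \frac{1}{2\pi} \int_{\T \times \R} (-iv)^\gamma f(x,v)\, e^{-ikx - i\xi v}\,\dd x\,\dd v.$$
For any $\alpha' \in \{0,\alpha\}$ and $\beta' \in \{0,\beta\}$, integrating by parts $\alpha'$ times in $x$ (periodicity kills the boundary terms) and $\beta'$ times in $v$ (boundary terms vanish thanks to the $(1+v^2)^\nu$-weighted regularity), I would obtain
$$\bigl| k^{\alpha'} \xi^{\beta'}\, \partial_\xi^\gamma \hat f_k(\xi) \bigr| \leq \frac{1}{2\pi} \int_{\T \times \R} |v|^\gamma\, \bigl|\partial_x^{\alpha'} \partial_v^{\beta'} f(x,v)\bigr|\,\dd x\,\dd v.$$
Writing $|v|^\gamma = \frac{|v|^\gamma}{(1+v^2)^{\nu/2}} \cdot (1+v^2)^{\nu/2}$ and applying Cauchy-Schwarz, the right-hand side is bounded by
$$\frac{1}{\sqrt{2\pi}} \left(\int_\R \frac{v^{2\gamma}}{(1+v^2)^\nu}\,\dd v\right)^{\!1/2} \Norm{\partial_x^{\alpha'}\partial_v^{\beta'} f}{L^2((1+v^2)^\nu\dd x\,\dd v)}.$$
The integral in $v$ is finite precisely under the hypothesis $\gamma < \nu - 1/2$ (equivalently $2\nu - 2\gamma > 1$), and because $\gamma$ ranges over finitely many integers it can be bounded by some $\tilde C(\nu)$ depending only on $\nu$. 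The second factor is $\leq \Norm{f}{\Hc^s_\nu}$ since $\alpha' + \beta' \leq s$.

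To conclude, I would use the elementary inequality $\langle k\rangle^\alpha \leq 2^{\alpha/2}(1+|k|^\alpha)$ (obtained from $(1+k^2)^\alpha \leq 2^\alpha \max(1,|k|^{2\alpha})$), and similarly for $\xi$, which combine to give
$$\langle k\rangle^\alpha \langle \xi\rangle^\beta \leq 2^{s/2} \!\!\sum_{\alpha' \in \{0,\alpha\},\ \beta' \in \{0,\beta\}} |k|^{\alpha'} |\xi|^{\beta'}.$$
Summing the four bounds above then yields \eqref{emb1} with $C(\nu) = 4\tilde C(\nu)/\sqrt{2\pi}$. There is no real obstacle in the argument; the one point requiring care is that the combinatorial constant from expanding $(1+|k|^\alpha)(1+|\xi|^\beta)$ is absorbed into $C(\nu)$ rather than inflating the $2^{s/2}$ prefactor.
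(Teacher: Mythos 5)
Your approach is essentially the paper's: reduce to a Fourier integral of a weighted derivative of $f$, apply Cauchy--Schwarz against $(1+v^2)^\nu$, and handle small $|k|$, $|\xi|$ by dropping the corresponding monomial. The four-case decomposition over $\alpha'\in\{0,\alpha\}$, $\beta'\in\{0,\beta\}$ at the end is a cleaner version of what the paper compresses into one sentence.

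However, there is a genuine (if local and repairable) error at the integration-by-parts step. After differentiating under the integral sign, the integrand is $(-iv)^\gamma f(x,v)$, so multiplying by $\xi^{\beta'}$ and integrating by parts $\beta'$ times in $v$ puts the derivative on the \emph{whole product}: you get $\partial_v^{\beta'}\bigl(v^\gamma f\bigr)$, not $v^\gamma\,\partial_v^{\beta'} f$. Your displayed bound
\begin{equation*}
\bigl| k^{\alpha'} \xi^{\beta'}\, \partial_\xi^\gamma \hat f_k(\xi) \bigr| \leq \frac{1}{2\pi} \int_{\T \times \R} |v|^\gamma\, \bigl|\partial_x^{\alpha'} \partial_v^{\beta'} f\bigr|\,\dd x\,\dd v
\end{equation*}
is therefore not what integration by parts produces when both $\beta'\ge 1$ and $\gamma\ge 1$; it drops the Leibniz cross-terms. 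The repair is routine: expand $\partial_v^{\beta'}(v^\gamma f)=\sum_{j}\binom{\beta'}{j}\frac{\gamma!}{(\gamma-j)!}\,v^{\gamma-j}\partial_v^{\beta'-j}f$, note that every term has $|v|$-power at most $\gamma$ and derivative order at most $\beta'$, so the same Cauchy--Schwarz against $(1+v^2)^\nu$ bounds each summand by $C(\nu)\Norm{f}{\Hc^s_\nu}$; the combinatorial sum is then absorbed into the constant (as the paper also implicitly does). Once that formula is corrected, the rest of your argument goes through and matches the paper's proof in substance.
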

\begin{proof}
We have by  using the  Cauchy-Schwarz inequality that 
\begin{eqnarray*}
\big| k^\alpha \xi^\beta  \partial_\xi^\gamma\hat{f}_k(\xi) \big| &=&  \frac{1}{2\pi} \left|\int_{\T \times \R} \partial_x^\alpha \partial_v^\beta(v^\gamma f(x,v)) e^{-ikx} e^{-iv\xi} \dd x  \dd v\right| \\
&\leqslant& C  \Norm{f}{\Hc^s_{\nu}} \Big( \int_\R (1 + |v|^2)^{\gamma -\nu} \dd v \Big)^{1/2}. 
\end{eqnarray*}
The previous inequality with $\alpha = \beta = 0$ yields the result when $k = 0$ or $|\xi | \leqslant 1$ and we conclude by using 
$\langle x \rangle \leqslant 2^{\alpha/2} |x|^2$ for $|x| > 1$ and the fact that $\nu - \gamma > 1/2$. 
\end{proof}

As a consequence of Theorem \ref{debussy}, we have the nonlinear Landau damping effect for the semi-discrete solution: The functions $g^n(x,v)$ being bounded in $\Hc^{s-4}_{\nu}$,  $f^n(x,v) = \eta(v) + \varepsilon r^n(x,v) = \eta(v) + \varepsilon g^n(x- n h  v,v)$ satisfy 
$$
\forall\, k \in \Z^*, \quad \forall \xi \in \R,\quad \forall\, \alpha + \beta  = s-4, \quad  |\hat f_k^n(\xi) | = \varepsilon|Ê\hat g^n_k(\xi + kn h   )  |\leq \frac{C \varepsilon}{\langle \xi + kn h   \rangle^{\alpha} \langle k \rangle^\beta}, 
$$
the last estimate being a consequence of  the  embedding  Lemma \ref{lememb}.
 This yields that  for every $k \neq 0$, $ \hat f_k^n(\xi)$ tends to zero when $nh \to \infty$ with a polynomial rate, but with a speed depending on $k$. 
 
Moreover,  by setting 
$$
\eta_h ^\infty(v) := \eta(v) + \frac{\varepsilon}{2\pi}\int_\T g_h ^{\infty}(x,v) \dd x, 
$$
we have by the previous  Lemma \ref{lememb} that  for $r \leq s - 4$, 
$$
\forall\, \xi \in \R, \quad 
|\hat f_0^n(\xi) - \hat \eta_h ^{\infty}(\xi) | \leq \frac{C}{\langle \xi \rangle^{r} \langle n h   \rangle^{s- r - 3}}. 
$$
In other words,  $f^n(x,v)$ converges weakly towards $\eta^\infty_h (v)$. Moreover, this weak limit $\eta^\infty_h (v)$ is $\mathcal{O}(h)$ for Lie splittings (or $\mathcal{O}(h^2)$ for Strang splitting) close to the exact limit 
$$
\eta^\infty(v) := \eta(v) + \frac{\varepsilon}{2\pi}\int_\T g^{\infty}(x,v) \dd x, 
$$
which exists by Theorem \ref{maintheo}.

\section{Backward error analysis} 
The unknown 
 $g(t,x,v )$ defined in \eqref{cimarosa} is solution of the equation
\begin{equation}
\label{eq:vp3}
\partial_t g = \{\phi(t,g(t)), \eta\}  + \varepsilon \{ \phi(t,g(t)),g \}. 
\end{equation}
where 
\begin{equation}
\label{eq:phi}
\phi(t,g) (x,v)= \int_{\R \times \mathbb{T}} ( \cos(x-y + t(v-u)) ) g(y,u) \dd u \dd y,
\end{equation}
 and  $\{f,g\} = \partial_x f \partial_v g - \partial_v f \partial_x g$ is the usual microcanonical Poisson bracket. 
 In Fourier space, we have the expression:
 $$
\phi(t,g) = \frac12  \sum_{k \in \{ \pm 1\}}e^{ikx}e^{iktv} \hat g_{k}(kt).
$$
In the evolution of the solution $g(t,x,v)$ of \eqref{eq:vp3}, 
an important role is played by the quantity
\begin{equation}
\label{eq:zetak}
\zeta_k(t) = \hat{g}_k(t,kt), \quad k  \in \{ \pm 1 \}, 
\end{equation}
such that 
$$
\phi(t,g(t)) = \frac12  \sum_{k \in \{ \pm 1\}}e^{ikx}e^{iktv} \zeta_k(t).
$$
Note that  for $k \neq 0$,   $ \zeta_{k}(t)$ is the Fourier coefficient in $x$ of the density $\rho(t,x)= \int_{\mathbb{R}} f(t,x,v) \, dv$.

The following result shows that semi-discrete solution $g^n(x,v)$ also satisfies an equation of the form \eqref{eq:vp3}, but with a discontinuous dependence with respect to the time: 

\begin{proposition}
\label{klanski}
For $h  > 0$, the solution $g^{n}(x,v)$ given by the splitting method  \eqref{lie1}  (Lie) or \eqref{strang} (Strang), and formula \eqref{cimarosan} coincides at times $t = nh $ with the solution $\gs(t,x,v)$ of the equation 
\begin{equation}
\label{julienclerc}
\partial_t \gs = \{Ê\Phi_h (t,\gs(t)), \eta\}  + \varepsilon \{ \Phi_h (t,\gs(t)),\gs(t) \}, 
\end{equation}
where $\Phi_h (t,\gs (t)) = \phi(\ssf_h (t),\gs(t))$ with the definition of $\phi$ given in \eqref{eq:phi}.\\
In the case of Lie splittings  \eqref{lie1} , $\ssf_{h}(t)$ are given by the formulas
\begin{equation}
\label{prince2}
\ssf_h (t)  = \left\lfloor \frac{t}{h }\right\rfloor h   + h,
\quad \mbox{and} \quad 
\ssf_h (t)  = \left\lfloor \frac{t}{h }\right\rfloor h, 
\end{equation}
respectively. 
In the case of the Strang splitting method \eqref{strang}, $\ssf_{h}(t)$ is given by the formula
\begin{equation}
\label{prince}
\ssf_h (t)  = \left\lfloor \frac{t}{h }\right\rfloor h   + \frac{h }{2}. 
\end{equation}

\end{proposition}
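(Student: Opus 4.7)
The plan is to recognize the modified equation \eqref{julienclerc} as the splitting flow pulled back into a moving frame whose velocity is locked to the piecewise-constant profile $\ssf_h(t)$. Fix an interval $[nh,(n+1)h)$ on which $\ssf_h\equiv t_*$ is constant, and introduce the auxiliary function
\[
\tilde f(t,x,v):=\eta(v)+\varepsilon\,\gs(t,\,x-t_*v,\,v),
\]
so that $\partial_t\tilde f = \varepsilon\,\partial_t\gs(t,x-t_*v,v)$. The key algebraic observation is that, from the Fourier representation $\phi(t_*,\gs)(x,v)=\tfrac12\sum_{k=\pm1}e^{ikx+ikt_*v}\hat\gs_k(t,kt_*)$, the substitution $x\mapsto x-t_*v$ cancels the $v$-dependent phases and produces a function of $x$ alone: $\phi(t_*,\gs)(x-t_*v,v)=\tfrac12\sum_{k=\pm1}e^{ikx}\hat\gs_k(t,kt_*)$. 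This immediately yields both the identity that $E(\tilde f,x)$ is proportional to $\partial_x\phi(t_*,\gs)(x-t_*v,v)$ and the relation $\partial_v\phi(t_*,\gs)=t_*\,\partial_x\phi(t_*,\gs)$. Combining these with $\partial_v\tilde f=\eta'(v)+\varepsilon[\partial_v\gs-t_*\partial_x\gs]$ evaluated in the moving frame, a direct computation collapses the two Poisson brackets on the right-hand side of \eqref{julienclerc} into $E(\tilde f,x)\partial_v\tilde f$. Thus, on $[nh,(n+1)h)$, equation \eqref{julienclerc} for $\gs$ is equivalent to the potential-only equation \eqref{boston} for $\tilde f$, and therefore $\tilde f(t)=\varphi_P^{t-nh}(\tilde f(nh))$.

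The conclusion then follows by induction on $n$ starting from $\gs(0)=g^0$. Assuming $\gs(nh)=g^n$, the relation $f^n(x,v)=\eta(v)+\varepsilon g^n(x-nhv,v)$ gives $\tilde f(nh,x,v)=\varphi_T^{t_*-nh}(f^n)(x,v)$, whence $\tilde f((n+1)h^-)=\varphi_P^h\circ\varphi_T^{t_*-nh}(f^n)$. Unfolding the definition of $\tilde f$ at $(n+1)h^-$, using continuity of $\gs$ in time, and translating the spatial variable yields
\[
\gs((n+1)h,y,v)=\varepsilon^{-1}\Bigl(\varphi_T^{(n+1)h-t_*}\circ\varphi_P^h\circ\varphi_T^{t_*-nh}(f^n)\bigl(y+(n+1)hv,v\bigr)-\eta(v)\Bigr).
\]
The composition $\varphi_T^{(n+1)h-t_*}\circ\varphi_P^h\circ\varphi_T^{t_*-nh}$ depends only on how $h$ is split between pre- and post-potential transport; the three choices $(t_*-nh,(n+1)h-t_*)=(h,0)$, $(0,h)$, $(h/2,h/2)$ produce respectively $\varphi_P^h\circ\varphi_T^h$, $\varphi_T^h\circ\varphi_P^h$, and $\varphi_T^{h/2}\circ\varphi_P^h\circ\varphi_T^{h/2}$, matching the three splitting schemes \eqref{lie1} and \eqref{strang} and forcing the values of $\ssf_h$ stated in \eqref{prince2} and \eqref{prince}. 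In each case the right-hand side of the displayed equality equals $\varepsilon^{-1}(f^{n+1}(y+(n+1)hv,v)-\eta(v))=g^{n+1}(y,v)$, closing the induction.

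The only non-routine ingredient is the cancellation underlying the first paragraph: it relies on the special structure of the HMF interaction, whereby $\phi$ involves only the two Fourier modes $k=\pm1$ sampled at the single point $\xi=\pm t_*$, which is precisely what allows the moving frame to absorb the $v$-dependence of the potential. The remainder is bookkeeping, matching the pre- and post-transport amounts induced by each $t_*$ to the transport steps of the corresponding scheme. Note that the symmetric case $t_*=nh+h/2$ that arises for the Strang splitting is exactly the one that will later produce the midpoint-rule cancellation responsible for the second-order estimate~\eqref{convergence}.
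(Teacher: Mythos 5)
Your argument is correct and is essentially the paper's proof read in reverse: the paper conjugates the potential flow $\varphi_P^t$ by $\varphi_T^{\pm \ssf_h}$ on each step and checks that the resulting path solves \eqref{julienclerc} (using $\varphi_T^t(\eta)=\eta$), while you conjugate the solution $\gs$ itself into the $\ssf_h$-moving frame and check that the conjugate solves the potential-only equation \eqref{boston}; both rest on the same conjugation identity and the same bookkeeping of how the step $h$ is split into pre- and post-potential transport. One minor overreach is your closing remark attributing the frame-absorption to the two-mode HMF structure: the facts you use, namely $\partial_v\phi(t_*,g)=t_*\,\partial_x\phi(t_*,g)$ and that $\phi(t_*,g)(x-t_*v,v)$ depends only on $x$, hold for any interaction kernel $P$, since they simply express that free transport conjugates the potential flow to a time-shifted potential.
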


\begin{proof} We prove the result in the case of Strang splitting \eqref{strang}-\eqref{cimarosan}, the proof being analogous for Lie splittings.\\
By definition, the function $f^n(x,v)$ satisfies the recurrence relation  \eqref{strang}. 
Hence, we have (using the linearity of $\varphi_T^t$ and the fact that $\varphi_T^t(\eta) = \eta$ for all $t \in \R$) 
\begin{eqnarray*}
\eta + \varepsilon g^n &=& \varphi_T^{-n h } (f^n)\\
&=&\varphi_T^{-n h } \circ \varphi_T^{h /2} \circ \varphi_P^h  \circ \varphi_T^{h /2} (f^{n-1})\\
&=& \varphi_T^{-n h  + h /2} \circ \varphi_P^h  \circ \varphi_T^{h /2 + (n-1)h } \varphi_T^{- (n-1)h } (f^{n-1})\\
&=& \varphi_T^{-n h  + h /2} \circ \varphi_P^h  \circ \varphi_T^{n h  - h /2} (\eta + \varepsilon g^{n-1}). 
\end{eqnarray*}
Now we verify that for $t \in [0 ,h ]$, the application $t \mapsto \varphi_T^{-n h  + h /2} \circ \varphi_P^t \circ \varphi_T^{n h  - h /2}(\eta + \varepsilon g^{n-1})$ is the solution of the equation 
$$
\partial_t \tilde \gs =\{ \phi(\ssf_h (t),\tilde \gs),\tilde \gs \}.
$$
 with inital data $\tilde \gs(0) = \eta + \varepsilon g^{n-1}$. Using the fact that $\eta$ is a stationary state of the equation, we easily get the result. 
\end{proof}

For notational convenience, we will often  write in the following $\ssf(t)$ instead of $\ssf_h (t)$. As in \eqref{eq:zetak}, we define 
\begin{equation}
\label{eq:psik}
\zs_k(t) = \hat{\gs}_k(t,k \ssf(t)), \quad k  \in \{ \pm 1 \}, 
\end{equation}
such that 
$$
\Phi_h (t,\gs(t)) = \phi(\ssf(t),\gs(t)) = \frac12  \sum_{k \in \{ \pm 1\}}e^{ikx}e^{ik\ssf(t)v} \zs_k(t).
$$
Of course, we expect  the $\zs_k(t)$ to be approximations of the terms $\zeta_k(t)$ defined in \eqref{eq:zetak}. 

\begin{lemma}
\label{approx}
Let $h _0> 0$ be given. There exist two constants $c$ and $C> 0$ such that for all  $h \in (0,  h _0]$ and all $t > 0$, 
$$
c \langle t \rangle \leq \langle \ssf_h (t) \rangle \leq C \langle t \rangle 
$$
and for all $t$ and $\sigma$, 
$$
 c\langle t \pm \sigma \rangle \leq \langle \ssf_h (t) \pm \ssf_h (\sigma) \rangle \leq C \langle t \pm \sigma \rangle. 
$$

\end{lemma}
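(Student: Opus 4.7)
The plan is to observe that in all three cases of Proposition \ref{klanski} the approximation $\ssf_h$ differs from the identity by at most one time step: $|\ssf_h(t)-t|\leq h\leq h_0$ for every $t\in\R$. Moreover, when $t>0$ the three formulas make $\ssf_h(t)\geq 0$. Once this elementary observation is in hand, both comparisons reduce to routine estimates on the Japanese bracket $\langle\cdot\rangle=\sqrt{1+|\cdot|^2}$.

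For the first comparison I would treat the upper bound by $|\ssf_h(t)|\leq t+h_0$, so that
\[
\langle \ssf_h(t)\rangle^2=1+\ssf_h(t)^2\leq 1+2t^2+2h_0^2\leq (2+2h_0^2)\langle t\rangle^2,
\]
which gives the right-hand inequality with $C=\sqrt{2+2h_0^2}$. For the lower bound I would split into two cases according to the size of $t>0$. When $t\geq 2h_0$, the bound $\ssf_h(t)\geq t-h_0\geq t/2$ yields $\langle\ssf_h(t)\rangle^2\geq 1+t^2/4\geq \tfrac{1}{4}\langle t\rangle^2$. When $0<t\leq 2h_0$, $\langle t\rangle$ is bounded by $\sqrt{1+4h_0^2}$ while $\langle\ssf_h(t)\rangle\geq 1$, so the inequality holds with the same constant up to choosing $c=\min(1/2,1/\sqrt{1+4h_0^2})$.

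For the second comparison I would apply the triangle inequality to write
\[
\bigl|\ssf_h(t)\pm\ssf_h(\sigma)-(t\pm\sigma)\bigr|\leq |\ssf_h(t)-t|+|\ssf_h(\sigma)-\sigma|\leq 2h_0,
\]
and then repeat the previous case analysis on the real number $\tau:=t\pm\sigma$. The upper bound follows directly from $|\ssf_h(t)\pm\ssf_h(\sigma)|\leq|\tau|+2h_0$ and the same expansion as above (with $h_0$ replaced by $2h_0$). For the lower bound, when $|\tau|\geq 4h_0$ one has $|\ssf_h(t)\pm\ssf_h(\sigma)|\geq|\tau|-2h_0\geq|\tau|/2$, and when $|\tau|\leq 4h_0$ the quantity $\langle\tau\rangle$ is bounded while $\langle\ssf_h(t)\pm\ssf_h(\sigma)\rangle\geq 1$; both cases are absorbed by a single constant $c>0$ depending only on $h_0$.

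The proof has no real obstacle: the only point worth stating carefully is the uniform bound $|\ssf_h(t)-t|\leq h_0$, which is immediate from the explicit formulas \eqref{prince2} and \eqref{prince}. All remaining steps are elementary algebraic manipulations on $\langle\cdot\rangle$ and the constants $c,C$ depend only on $h_0$, as required.
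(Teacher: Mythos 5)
Your proof is correct and follows essentially the same strategy as the paper: establish the uniform bound $|\ssf_h(t)-t|\leq h\leq h_0$ directly from the three explicit formulas, and then deduce both comparisons by elementary estimates on $\langle\cdot\rangle$ (the paper states the conclusion more tersely, saying the bound ``clearly implies'' the inequalities, whereas you spell out the case analysis). No differences in substance.
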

\begin{proof}
For $t \in \R$, we can write $t = n h  + \mu$ with $\mu \in [0,h )$. In the case of Strang splitting \eqref{prince}, we thus have 
$\ssf(t ) = n h  + h /2 = t + h/2 - \mu$. Hence we have $t - \ssf(t) \in [-h/2,h/2)$ which clearly implies the first inequality. The second is proved using the fact that with similar calculations $t \pm \sigma = \ssf(t) \pm \ssf(\sigma) + \mathcal{O}(h)$. The proof is analogous for Lie splittings \eqref{prince2}. \end{proof}

As in \cite{RF} we introduce the  weighted norms: 
 \begin{equation}
 \label{eq:NetM}  N_{T,s, \nu}(\gs)= \sup_{t \in [0,  T]}  {\| \gs(t) \|_{\Hc^s_{\nu}} \over \langle t \rangle^3}, \quad   M_{T, \gamma}(\zs)= \sup_{t \in [0,T]} \sup_{k \in \{\pm 1\}} \langle t \rangle^\gamma |\zs_{k}(t) |
 \end{equation}
and 
  \begin{equation}
  \label{Qdef}  Q_{T,s, \nu}(\gs)=  N_{T,s,\nu}(\gs) + M_{T, s-1}(\zs) + \sup_{[0, T ]} \|\gs(t) \|_{\Hc^{s-4}_{\nu}}.
  \end{equation}
 
We shall  prove the following result: 
\begin{theorem}
\label{maintheod}
Let us fix $s \geq 7$, $\nu >1/2$ and $R_{0}>0$ such that $Q_{0, s, \nu}(\gs) \leq R_{0},$ and assume that $\eta\in \Hc^{s+4}_{\nu}$ satisfies the assumption ${\bf (H)}$.  Then there exist $R>0$, $h_0 > 0$ and $\eps_{0}>0$ such that for every $\eps \in (0, \eps_{0}]$, $h \in (0,h_0]$ and 
 for every $T\geq 0$, the solution of \eqref{julienclerc} satisfies the estimate
 $$ Q_{T, s,\nu}(\gs) \leq R.
 $$
\end{theorem}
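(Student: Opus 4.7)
\begin{Proof}
The plan is a continuation (bootstrap) argument modelled on the analysis of the continuous equation in \cite{RF}, the new feature being the piecewise-constant time dependence of $\ssf_h$ in the right-hand side of \eqref{julienclerc}. Fix a constant $R > 0$ to be chosen large in terms of $R_{0}$, and set
\[
T^{*} := \sup\{\, T \geq 0 \,:\, Q_{T,s,\nu}(\gs) \leq R \,\}.
\]
Since $t \mapsto \gs(t)$ is continuous in time (the splitting flow is continuous across each breakpoint $nh$), each of the three pieces of $Q_{T,s,\nu}$ is continuous and nondecreasing in $T$, and the hypothesis $Q_{0,s,\nu}(\gs) \leq R_{0}$ ensures $T^{*} > 0$. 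The goal is to establish on $[0,T^{*}]$ an improved bound $Q_{T,s,\nu}(\gs) \leq R/2$, which by continuity forces $T^{*} = +\infty$. Throughout, the constants $\varepsilon_{0}$ and $h_{0}$ are chosen small in terms of $R_{0}$, $R$, $\|\eta\|_{\Hc^{s+4}_{\nu}}$, and the Penrose constant $\kappa$.

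The bound on $N_{T,s,\nu}(\gs)$ follows from a direct energy estimate on \eqref{julienclerc}. Writing explicitly
\[
\Phi_h(t,\gs(t)) = \tfrac12 \sum_{k \in \{\pm 1\}} e^{ikx}\,e^{ik\ssf(t)v}\,\zs_k(t),
\]
each $v$-derivative falling on $\Phi_h$ costs a factor $|\ssf(t)| \lesssim \langle t \rangle$ (by Lemma \ref{approx}), so the linear contribution $\{\Phi_h,\eta\}$ to $\partial_t \|\gs\|_{\Hc^s_\nu}^2$ is controlled by $\langle t \rangle^{s}|\zs_k(t)|\,\|\gs\|_{\Hc^s_\nu}\,\|\eta\|_{\Hc^{s+1}_\nu}$; the bootstrap bound $M_{T,s-1}(\zs) \leq R$ makes this integrable enough to yield $\|\gs(t)\|_{\Hc^s_\nu}\leq C\langle t \rangle^{3}$. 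The nonlinear term $\varepsilon\{\Phi_h,\gs\}$ is absorbed through a tame product estimate using $\varepsilon_{0}$ small. For the uniform $\Hc^{s-4}_{\nu}$ bound, the same calculation carried out four orders of regularity lower produces gains of size $\langle t \rangle^{s-5}|\zs_k(t)| \lesssim \langle t \rangle^{-4}$, which are integrable since $s \geq 7$, delivering a uniform-in-time control.

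The heart of the argument is the bound on $M_{T,s-1}(\zs)$. Taking the Fourier coefficient $\hat\gs_k(t,\xi)$ of \eqref{julienclerc} and evaluating it on the characteristic $\xi = k\ssf(t)$, Duhamel's formula gives an equation of the form
\[
\zs_k(t) = F_k(t) + \int_0^t K_h(k,t,\sigma)\,\zs_k(\sigma)\,\dd\sigma + \varepsilon \int_0^t \Rc_k(t,\sigma)\,\dd\sigma, \qquad k \in \{\pm 1\},
\]
where $F_k(t)$ carries the initial data, $K_h(k,t,\sigma)$ is the kernel $K$ of \eqref{kernel} evaluated at $\ssf(t)-\ssf(\sigma)$ (with the $nt$ prefactor replaced by its $\ssf$-analogue), and $\Rc_k$ is bilinear in $\gs$ and controlled by $N_{T,s,\nu}(\gs)\,M_{T,s-1}(\zs)\leq R^{2}$. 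By Lemma \ref{approx} one has $\langle \ssf(t)-\ssf(\sigma)\rangle \simeq \langle t-\sigma\rangle$, so $K_h$ enjoys the same polynomial time decay as $K_1$; moreover the Fourier--Laplace symbol of $K_h$ is an $\mathcal{O}(h)$ perturbation of $\hat K_1(k,\tau)$ on the closed lower half-plane, so the strict lower bound of hypothesis $\mathbf{(H)}$ is preserved uniformly for $h \leq h_{0}$. Inverting the Volterra operator as in \cite{RF} then yields $\langle t \rangle^{s-1}|\zs_k(t)| \leq C(\|\gs(0)\|_{\Hc^{s}_{\nu}} + \varepsilon R^{2})$, and choosing $R$ large and $\varepsilon_{0}$ small in succession delivers $Q_{T,s,\nu}(\gs) \leq R/2$, closing the bootstrap.

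The main obstacle is precisely the Volterra inversion in the preceding step: the kernel $K_h(k,t,\sigma)$ is only piecewise smooth in $(t,\sigma)$ because of the floor function in $\ssf_h$, so the Paley--Wiener machinery of \cite{RF} does not apply verbatim. The resolution exploits that $\ssf_h(t) = t + \mathcal{O}(h)$ uniformly, so the symbol of $K_h$ differs from that of $K_1$ by an $\mathcal{O}(h)$ perturbation on the closed lower half-plane, and the openness of the Penrose condition delivers a uniform quantitative invertibility for $h$ small. The remaining work is careful bookkeeping of weights and polynomial decay, with the continuous time variable $t$ systematically replaced by $\ssf(t)$ in all oscillatory phases.
\end{Proof}
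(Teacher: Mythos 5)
Your overall structure---a continuation/bootstrap argument closing a priori bounds on $N_{T,s,\nu}(\gs)$, $M_{T,s-1}(\zs)$, and $\sup_{[0,T]}\|\gs\|_{\Hc^{s-4}_\nu}$, followed by the choice $R>CR_0$ and $\eps_0,h_0$ small---matches the paper's, and your sketch of the energy estimates for the $\Hc^s_\nu$ and $\Hc^{s-4}_\nu$ bounds is on the right track (the paper carries them out via the commutator Lemma~\ref{lemcom} and Gronwall). The gap is in the central Volterra step.

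You propose to invert the discretized kernel $K_h(k,t,\sigma)=K(k,\ssf(t)-\ssf(\sigma))$ directly, arguing that its ``Fourier--Laplace symbol'' is an $\mathcal{O}(h)$ perturbation of $\hat K_1(k,\tau)$ on the closed lower half-plane. But $\ssf(t)-\ssf(\sigma)$ is \emph{not} a function of $t-\sigma$: with $\ssf(t)=\lfloor t/h\rfloor h + h/2$ (Strang case) one has $\ssf(t)-\ssf(\sigma)=(\lfloor t/h\rfloor-\lfloor\sigma/h\rfloor)h$, which jumps between $\lfloor(t-\sigma)/h\rfloor h$ and $(\lfloor(t-\sigma)/h\rfloor+1)h$ depending on the fractional parts of $t$ and $\sigma$; for instance $t=0.9h,\,\sigma=0.1h$ gives $0$ while $t=1.1h,\,\sigma=0.3h$ gives $h$, though $t-\sigma=0.8h$ in both cases. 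So $K_h$ is a genuinely non-convolution, time-inhomogeneous operator; the object ``$\hat K_h(\tau)$'' is not defined, and the Paley--Wiener machinery of Lemma~\ref{lemvolterra} cannot be applied to $K_h$, even perturbatively at the symbol level. The real obstruction is loss of translation invariance, not the piecewise smoothness you flag.

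The paper sidesteps this by \emph{not} inverting $K_h$. It rewrites \eqref{volterrad} as \eqref{beethov},
\begin{equation*}
\zs_n(t)=\int_0^t K(n,t-\sigma)\,\zs_n(\sigma)\,\dd\sigma+F_n(t)+G_n(t),\qquad
G_n(t)=\int_0^t\bigl(K(n,\ssf(t)-\ssf(\sigma))-K(n,t-\sigma)\bigr)\zs_n(\sigma)\,\dd\sigma,
\end{equation*}
so the operator to be inverted is the \emph{continuous} convolution kernel $K(n,t-\sigma)$, and the entire discretization error is moved to the source $G_n$. The unchanged Lemma~\ref{lemvolterra} then applies, and a direct bound using $|\partial_t K(n,\theta)|\lesssim\langle\theta\rangle^{-s-2}$ and $|(\ssf(t)-\ssf(\sigma))-(t-\sigma)|\lesssim h$ gives $M_{T,s-1}(G)\leq Ch\,M_{T,s-1}(\zs)$, which is absorbed for $h\leq h_0$. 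This is morally the ``$\mathcal{O}(h)$-perturbation of an invertible operator'' idea you have in mind, but the decomposition is what makes it precise: the perturbation is controlled as an $\mathcal{O}(h)$ bounded map in the weighted sup-norm $M_{T,s-1}$, not as a symbol perturbation. As stated, your central Fourier--Laplace reduction would fail and needs to be replaced by this decomposition.
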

This result is a semi-discrete version of the main Theorem in \cite{RF} where  the same  norms are used  to control the solution $g(t)$ of the equation \eqref{eq:vp3}. Let us also mention that it holds for any of the three formulas \eqref{prince}-\eqref{prince2} defining $\ssf(t),$ the only property being used is the fact that $\ssf(t)$ satisfies Lemma \ref{approx}.
%
%

%
%
%

\section{Estimates}
   We fix now $s \geq 7$ and $R_0$ as in the previous Theorem. 
   In the following a priori estimates, $C$ stands for a number which may change from line to line and which is independent of $R_{0}$, $R$, $h$, $\eps$ and $T$.
   
%

\subsection{Estimate of $M_{T, s-1}(\zs)$ }
Towards the proof of Theorem \ref{maintheod}, we shall first estimate $\zs_{k}(t)$, $k=\pm1$.
 \begin{proposition}
 \label{propzeta}
 Assuming that $\eta \in \Hc^{s+2}_{\nu}$ verifies the assumption {\bf (H)}, then there exist $C>0$ and $h_0 > 0$ such that for every $T>0$ and $ h \in (0,h_0]$, every  solution of \eqref{julienclerc}  such that  
 $ Q_{T, s,\nu} (\gs) \leq R$ enjoys the estimate
 \begin{equation}
 \label{youyou} 
 M_{T, s-1}(\zs) \leq C \big(  R_{0} + \eps R^2 \big).
 \end{equation}
 \end{proposition}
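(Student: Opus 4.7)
The strategy is to derive a Volterra-type integral equation for $\zs_k(t) = \hat{\gs}_k(t, k\ssf(t))$, $k \in \{\pm 1\}$, mirroring the continuous case of \cite{RF}, and to invert it using the Penrose assumption (H). The novelty compared to the continuous case is that the discrete-time kernel involves $\ssf(t) - \ssf(\sigma)$ instead of $t - \sigma$; this is handled using Lemma \ref{approx}.

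\textbf{Step 1 (Volterra equation).} Take the Fourier transform in $(x,v)$ of equation \eqref{julienclerc}. Since $\eta = \eta(v)$, the bracket $\{\Phi_h, \eta\}$ reduces to $\partial_x \Phi_h \cdot \eta'(v)$, whose Fourier coefficient is explicit in terms of $\zs_k(t)$ and $\hat{\eta}_0$. A direct computation yields, at mode $k \in \{\pm 1\}$ and frequency $\xi$,
$$ \partial_t \hat{\gs}_k(t, \xi) = -\tfrac{1}{2} k (\xi - k\ssf(t))\, \hat{\eta}_0(\xi - k\ssf(t))\, \zs_k(t) + \varepsilon \hat{\Nc}_k(t, \xi), $$
where $\hat{\Nc}_k$ comes from $\{\Phi_h, \gs\}$. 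Since $\ssf(t)$ is piecewise constant, integrating from $0$ to $t$ with $\xi = k\ssf(t)$ frozen on each sub-interval and then evaluating gives
$$ \zs_k(t) = \hat{g}_k(0, k\ssf(t)) + \int_0^t K(k, \ssf(t) - \ssf(\sigma))\, \zs_k(\sigma)\, d\sigma + \varepsilon I_k(t), $$
where $K$ is as in \eqref{kernel} (using $p_{\pm 1} = 1/2$) and $I_k$ collects the nonlinear contribution evaluated at $\xi = k\ssf(t)$.

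\textbf{Step 2 (source and nonlinear bounds).} The initial-data term is controlled by the embedding Lemma \ref{lememb} applied to $g(0) \in \Hc^s_\nu$: $|\hat{g}_k(0, k\ssf(t))| \leq C R_0 \langle \ssf(t) \rangle^{-(s-1)} \leq C R_0 \langle t \rangle^{-(s-1)}$ thanks to Lemma \ref{approx}. The nonlinear piece $I_k(t)$ is bilinear in $(\zs, \gs)$; expanding $\{\Phi_h, \gs\}$ in Fourier gives products of $\zs_{k'}(\sigma)$ (with decay coming from $M_{T, s-1}(\zs) \leq R$) and Fourier derivatives of $\hat{\gs}_{k''}(\sigma, \cdot)$ evaluated at shifted frequencies (controlled via $N_{T, s, \nu}(\gs) \leq R$ and Lemma \ref{lememb}, with the regularity $\eta \in \Hc^{s+2}_\nu$ absorbing the extra derivatives lost in the shifts). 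This yields $|I_k(t)| \leq C R^2 \langle t \rangle^{-(s-1)}$, paralleling \cite{RF}.

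\textbf{Step 3 (Volterra inversion and main obstacle).} It remains to solve, in the weighted space with norm $\sup_t \langle t \rangle^{s-1} |\cdot|$,
$$ \zs_k(t) = F_k(t) + \int_0^t K(k, \ssf(t) - \ssf(\sigma))\, \zs_k(\sigma)\, d\sigma, $$
where $F_k(t) = \hat{g}_k(0, k\ssf(t)) + \varepsilon I_k(t)$. The plan is to split $K(k, \ssf(t) - \ssf(\sigma)) = K(k, t-\sigma) + r_h(t, \sigma)$, where Lemma \ref{approx} combined with the smoothness of $\hat{\eta}_0$ (from $\eta \in \Hc^{s+2}_\nu$) gives an $O(h)$ bound on $r_h$ enjoying the same algebraic decay in $\langle t-\sigma \rangle$ as $K$. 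The principal convolution-type operator with kernel $K(k, t-\sigma)$ is inverted on the weighted $L^\infty$ space by the Paley--Wiener type argument of \cite{RF} based on (H), and the perturbation $r_h$ is absorbed for $h \leq h_0$ small enough. This is the main obstacle of the proof: the kernel $K(k, \ssf(t) - \ssf(\sigma))$ is \emph{not} of convolution form due to the jumps of $\ssf$, so (H) cannot be applied directly, and one must carefully track how the $O(h)$ deviation propagates through the Volterra inversion while preserving the decay rate $\langle t \rangle^{-(s-1)}$. Combining with Step 2 then gives $M_{T, s-1}(\zs) \leq C(R_0 + \varepsilon R^2)$, as desired.
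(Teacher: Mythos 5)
Your proposal follows essentially the same route as the paper: set up the Volterra equation for $\zs_k$ with kernel $K(k,\ssf(t)-\ssf(\sigma))$, rewrite it with the continuous-time kernel $K(k,t-\sigma)$ plus a discretization-error term, invert via the Penrose-type Lemma \ref{lemvolterra}, and absorb the $\mathcal{O}(h)$ remainder for $h\leq h_0$ small. The paper packages your $r_h$ as the extra source term $G_n(t)=\int_0^t\big(K(n,\ssf(t)-\ssf(\sigma))-K(n,t-\sigma)\big)\zs_n(\sigma)\,\dd\sigma$ so that Lemma \ref{lemvolterra} applies verbatim, and it carries out your Step~2 by splitting the nonlinear source into the $k=-n$ piece and the $k=n$ piece, the latter further split on $[0,t/2]$ and $[t/2,t]$ (the second sub-interval being controlled by the $\sup_{[0,T]}\|\gs\|_{\Hc^{s-4}_\nu}$ component of $Q_{T,s,\nu}$ rather than $N_{T,s,\nu}$) — details your plan leaves implicit but that are consistent with it.
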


\begin{proof}
 The main ingredient of the proof of the previous result is to write the equation \eqref{julienclerc} in Fourier: 
\begin{multline}
\label{fourier2}
\hat \gs_n(t,\xi) = \hat \gs_n(0,\xi)  +  \int_{0}^t p_n \zs_n(\sigma) \hat \eta_0(\xi - n \ssf(\sigma))( n^2 \ssf( \sigma) - n \xi)Ê\dd \sigma
 \\
+ \varepsilon \sum_{k \in \Z } p_k \int_0^t \zs_k(\sigma)  \hat \gs_{n-k}(\sigma,\xi - k \ssf(\sigma)) (  n k \ssf(\sigma) - k \xi)Ê\dd \sigma, 
\end{multline}
for all $(n,\xi) \in \Z \times \R$, 
with $p_k = \frac12$ for $k \in \{\pm 1\}$ and $p_k = 0$ for $k \neq \pm 1$, and where the $\zs_k(t)$ are defined by \eqref{eq:psik}.
Setting $\xi = n \ssf(t)$ in \eqref{fourier2}, the equation satisfied by $(\zs_n(t))_{n=\pm1}$ can be written under the almost closed form 
\begin{multline}
\label{zetanp1}
\zs_n(t) = \hat \gs_n(0,n \ssf(t)) -  \int_{0}^t p_n \zs_n(\sigma)  \hat \eta_0(n(\ssf(t) - \ssf(\sigma))) n^2 (\ssf(t) - \ssf(\sigma) )Ê\dd \sigma\\
- \varepsilon   \sum_{k \in \{ \pm 1\}} p_k \int_0^t \zs_k(\sigma)  \hat \gs_{n-k}(\sigma, n \ssf(t) - k \ssf(\sigma)) k n(\ssf(t) - \ssf(\sigma)) Ê\dd \sigma. 
\end{multline}
\begin{remark}
\label{constantpm}
Note that, for every $m\in \mathbbm{N},$  we have for  $t\in[mh,(m+1)h]$ the formula
\begin{multline}
\notag
\zs_n(t) - \hat \gs_n(mh,n \ssf(t)) =  - \int_{mh}^{t} p_n \zs_n(\sigma)  \hat \eta_0(n(\ssf(t) - \ssf(\sigma))) n^2 (\ssf(t) - \ssf(\sigma) )Ê\dd \sigma\\
- \varepsilon   \sum_{k \in \{ \pm 1\}} p_k \int_{mh}^{t} \zs_k(\sigma)  \hat \gs_{n-k}(\sigma, n \ssf(t) - k \ssf(\sigma)) k n(\ssf(t) - \ssf(\sigma)) Ê\dd \sigma.
\end{multline}~\\
As $\ssf(t)-\ssf(\sigma)=0$ for almost every $\sigma \in [mh,t],$ we notice that the function $t\mapsto \hat{\gs}_{n}(t,n\ssf(t)) = \zs_{n}(t)$ is constant on the small intervalls $[mh,(m+1)h].$ This is due to the fact that the electric field is constant during the evolution of \eqref{boston}. 
\end{remark}

To study the equation \eqref{zetanp1}, we shall first consider the corresponding  linear equation, that is to say that we  shall first see
 \begin{equation}
 \label{Fdef} F_{n}(t):=   \hat \gs_n(0,n\ssf(t))- \varepsilon   \sum_{k \in \{ \pm 1\}} p_k \int_0^t \zs_k(\sigma)  \hat g_{n-k}(\sigma, n\ssf(t) - k\ssf(\sigma)) k n(\ssf(t)-\ssf(\sigma)) Ê\dd \sigma
 \end{equation}
  as a given source term and we shall study the linear integral equation
 \begin{equation}
 \label{volterrad}
  \zs_{n} (t) =  \int_{0}^t K(n, \ssf(t)-\ssf(\sigma)) \zs_{n}(\sigma) \,\dd\sigma + F_{n}(t) \quad n= \pm 1, 
 \end{equation}
 where the kernel $K(n,t)$ has been introduced in \eqref{kernel}.
 We rewrite this equation as 
  \begin{equation}
 \label{beethov}
  \zs_{n} (t) =  \int_{0}^t K(n, t - \sigma) \zs_{n}(\sigma) \,\dd\sigma + F_{n}(t) + G_n(t)\quad n= \pm 1,
 \end{equation}
 where 
 \begin{equation}
 G_n(t) = \int_{0}^t \Big( K(n, \ssf(t)-\ssf(\sigma)) - K(n, t - \sigma)\Big)    \zs_{n}(\sigma)\,\dd\sigma 
 \end{equation}
 is an error term due to the time discretization. 
To study the linear equation \eqref{volterrad}-\eqref{beethov}, we use the result given by the following Lemma. The proof of this Lemma can be found in \cite{RF}. For completeness, we recall it in Appendix of the present paper. 
  \begin{lemma}
  \label{lemvolterra}
 Let $\gamma \geq 0$, and  assume that $\eta \in \Hc^{\gamma+3}_{\nu}$ satisfies ${\bf (H)}$. Then,
    there exists  $C>0$ such   for every $T \geq 0$, we have
   $$ M_{T, \gamma}( \zs) \leq C M_{T, \gamma}( F + G).$$
     \end{lemma}


 From this Lemma and \eqref{emb1}, we first  get that
 \begin{equation}
 \label{est1:1} M_{T, s-1}(\zs) \leq  C\big( \|\gs(0) \|_{\Hc^{s}_{\nu}} + M_{T,s-1}(G) + \eps  M_{T, s-1}(F^1) + \eps  M_{T, s-1}(F^2) \big) 
 \end{equation}
 where
 \begin{align*}
 &  F^1_{n}(t) =  n^2 p_{-n} \int_{0}^t \zs_{-n}(\sigma) \hat \gs_{2n}( \sigma, n(\ssf(t) + \ssf(\sigma))) (\ssf(t)- \ssf(\sigma)) \, \dd\sigma, \quad n= \pm1, \\
 &  F^2_{n}(t) = -n^2 p_{n} \int_{0}^t \zs_{n}(\sigma) \hat \gs_{0}( \sigma, n(\ssf(t)- \ssf(\sigma))) (\ssf(t)- \ssf(\sigma)) \,\dd\sigma, \quad n= \pm 1.
 \end{align*}
 Let us estimate $F^1_{n}$.  By using  again \eqref{emb1} and the definition \eqref{eq:NetM} of $N_{\sigma,s,\nu}$,  we get using Proposition \ref{klanski} that
 $$ |F^1_{n}(t)| \leq  C \int_{0}^t {  (\ssf(t)- \ssf(\sigma)) \langle \sigma \rangle^3 M_{\sigma, s-1}(\zs) N_{\sigma, s,\nu}(\gs)  \over \langle \sigma \rangle^{s-1}   \langle \ssf(t)+  \ssf(\sigma) \rangle^s} \, \dd \sigma \leq C {R^2 \over\langle  t \rangle^{s-1}} \int_{0}^{+ \infty} { 1 \over \langle \sigma \rangle^{s-4}}\, \dd \sigma \leq   C {R^2 \over\langle  t \rangle^{s-1}}$$
  provided  $s \geq  6$. This yields that for all $T \geq 0$
  $$ M_{T, s-1}(F^1) \leq CR^2.$$
  
  To estimate $F_{n}^2$, we split the integral into two parts: we write
   $$  F^2_{n}(t) =  I^1_{n}(t) + I_{n}^2 (t)$$ 
   with
    \begin{align*}
 &    I^1_{n}(t) = -n^2 p_{n} \int_{0}^{t \over 2} \zs_{n}(\sigma) \hat \gs_{0}( \sigma, n(\ssf(t)- \ssf(\sigma))) (\ssf(t)- \ssf(\sigma)) \, \dd\sigma, \quad n= \pm 1, \\
  &    I^2_{n}(t) = -n^2 p_{n} \int_{t \over 2}^{t } \zs_{n}(\sigma) \hat \gs_{0}( \sigma, n(\ssf(t)- \ssf(\sigma))) (\ssf(t)- \ssf(\sigma)) \, \dd\sigma, \quad n= \pm 1.
     \end{align*}
      For $I_{n}^1$, we proceed as previously, 
      $$ |I_{n}^1 (t) | \leq C R^2  \int_{0}^{t \over 2 } {  \langle \sigma \rangle^3( \ssf(t)- \ssf(\sigma))  \over \langle \sigma \rangle^{s-1}  \langle \ssf(t)-\ssf(\sigma )\rangle^{s}} \, \dd \sigma \leq  { C R^2 \over \langle t \rangle^{s-1}} \int_{0}^{+ \infty} { 1 \over \langle
       \sigma \rangle^{s- 4}} \, \dd \sigma$$
        and hence since $s \geq 6$, we have
   $$ M_{T, s-1}(I^1) \leq C R^2.$$
     To estimate $I_{n}^2$, we shall rather use the last factor in the definition of $Q_{T,s,\nu}$ in \eqref{Qdef}. By using again \eqref{emb1},  we write
     $$ | I_{n}^2(t) | \leq C\int_{t \over 2}^{ t  }  { M_{\sigma, s-1}(\zs) \over \langle \sigma \rangle^{s-1} }   {\| \gs(\sigma) \|_{\Hc^{s-4}_{\nu}} \over  \langle  \ssf(t)- \ssf(\sigma) \rangle^{s-5}} \, \dd \sigma
      \leq  {C R^2 \over \langle t \rangle^{s-1}  }\int_{0}^{+ \infty} { 1 \over \langle \sigma  \rangle^{s-5}}\, \dd \sigma \leq { CR^2 \over \langle t \rangle^{s-1}}$$
      and hence since $s \geq 7,$ we find again 
     $$ M_{T, s-1}(I^2) \leq { C R^2}.$$ 
     
Finally, we have 
\begin{eqnarray*}
\langle t \rangle^{s-1} |G_n(t)|   &\leq& M_{T,s-1}(\zs) \int_{0}^t \frac{ \langle t \rangle^{s-1}}{\langle \sigma\rangle^{s-1}} \left |  \int_{t - \sigma}^{\ssf(t)-\ssf(\sigma)}
| \partial_t K(n, \theta) |\dd \theta \right | \dd\sigma 
\\
&\leq & C
M_{T,s-1}(\zs) \int_{0}^t  \left| \int_{t - \sigma}^{\ssf(t)-\ssf(\sigma)}
\frac{ \langle t \rangle^{s-1}}{\langle \theta \rangle^{s+2}\langle \sigma\rangle^{s-1}}  \dd \theta  \right|\dd\sigma, 
\end{eqnarray*}
where we used the fact that  $\eta \in \Hc^{s+2}_{\nu}$  and Lemma \ref{lememb}. 
Now, since 
$$
\left| \int_{t - \sigma}^{\ssf(t)-\ssf(\sigma)}\frac{ \dd \theta}{\langle \theta \rangle^{s+2}}  \right|\leq  \frac{C h}{\langle t- \sigma  \rangle^{s+2}},
$$
 we get 
$$
M_{T,s-1} (G) \leq C h M_{T,s-1} (\zs)  \int_{0}^t \frac{\langle t \rangle^{s-1}}{\langle t - \sigma \rangle^{s+2}\langle \sigma \rangle^{s-1}} \dd \sigma. 
$$
As the integral 
$$
\int_{0}^t \frac{\langle t \rangle^{s-1}}{\langle t - \sigma \rangle^{s+2}\langle \sigma \rangle^{s-1}} \dd \sigma
\leq C \int_{0}^{t/2} \frac{1}{\langle t \rangle^{3}\langle \sigma \rangle^{s-1}} \dd \sigma + C \int_{t/2}^t \frac{1}{\langle t - \sigma \rangle^{s+2}} \dd \sigma.
$$
is uniformly bounded in time, we conclude that 
\begin{equation}
\label{G+1}M_{T,s-1} (G) \leq C h M_{T,s-1}(\zs).
\end{equation}
      By combining the last estimates and \eqref{est1:1}, we thus obtain \eqref{youyou}.
      $$   M_{T, s-1}(\zs) \leq  C\big(  R_{0} + h M_{T,s-1}(\zs) + \eps  R^2 \big).$$
      By taking $h \leq h_0$ small enough, 
       this ends the proof of Proposition \ref{propzeta}.  
\end{proof}

%
%
     

\subsection{Estimate of $N_{T, s,\nu}(\gs)$}
\begin{proposition}
 \label{propgs}
 Assuming that $\eta \in \Hc^{s+2}_{\nu}$ verifies the assumption {\bf (H)}, then there exists $C>0$ and $h_0 > 0$ such that for every $T>0$ and $h \in (0,h_0]$, every  solution of \eqref{julienclerc}  such that $Q_{T, s,\nu} (\gs) \leq R$ enjoys the estimate
   $$  N_{T, s,\nu}(\gs) \leq  C (R_{0}+  \eps R^2)  ( 1 + \eps R) e^{ C \eps  R}.$$
 \end{proposition}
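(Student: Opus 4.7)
The strategy parallels the continuous-time estimate from \cite{RF}, with the substitution $\sigma \mapsto \ssf(\sigma)$ being innocuous thanks to Lemma \ref{approx}. I would start from the Duhamel-like Fourier representation \eqref{fourier2}, written schematically as
$$
\hat{\gs}_n(t,\xi) = \hat{\gs}_n(0,\xi) + L_n(t,\xi) + \eps \Nc_n(t,\xi),
$$
where $L_n$ is the linear source (involving $\hat{\eta}_0$ and the traces $\zs_n$, nonzero only for $n = \pm 1$) and $\Nc_n$ is the bilinear interaction term. Since $\|f\|^2_{\Hc^s_\nu}$ is equivalent, via Plancherel, to a sum of $L^2$ norms of $k^\alpha \xi^\beta \partial_\xi^\gamma \hat{f}_k(\xi)$ for $\alpha+\beta \leq s$ and $\gamma \leq \nu$, my goal is to estimate each such quantity applied to the three pieces above and to recombine them in the weighted norm.

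First I would handle the linear source $L_n(t,\xi)$: both the factor $n^2\ssf(\sigma) - n\xi$ and the $\partial_\xi$-derivatives fall either on $\hat{\eta}_0(\xi - n\ssf(\sigma))$ (costing regularity of $\eta$) or on the polynomial factor in $\xi$. Using Lemma \ref{lememb} on $\eta \in \Hc^{s+2}_\nu$, the decay $|\zs_n(\sigma)| \leq M_{T,s-1}(\zs) \langle \sigma \rangle^{-(s-1)} \leq CR\langle \sigma \rangle^{-(s-1)}$ (since $Q_{T,s,\nu}(\gs) \leq R$), and Lemma \ref{approx} to pass from $\langle \ssf(\sigma) \rangle$ to $\langle \sigma \rangle$ with constants, the $\sigma$-integral will converge thanks to the margin $s \geq 7$; splitting $\int_0^{t/2}$ and $\int_{t/2}^t$ as in Proposition \ref{propzeta} distributes the weights between $\sigma$ and $t-\sigma$, and bounds this term by $C(R_0 + \eps R^2)\langle t \rangle^3$ once Proposition \ref{propzeta} is invoked to control $M_{T,s-1}(\zs)$.

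Next I would treat the nonlinear term $\eps \Nc_n(t,\xi)$. Here, $\partial_\xi^\gamma$ applied to $\hat{\gs}_{n-k}(\sigma, \xi - k\ssf(\sigma))$ only translates the derivative and produces no $\ssf(\sigma)$ factor, but the explicit prefactor $nk\ssf(\sigma) - k\xi$ costs one extra $\xi$- or $\ssf(\sigma)$-weight, which must be absorbed either into the $\Hc^s_\nu$-norm of $\gs(\sigma)$ or into the decay of $\zs_k(\sigma)$. After shifting $\xi' = \xi - k\ssf(\sigma)$, placing the weights carefully, and again using Lemma \ref{approx}, this contribution is dominated by something of the form
$$
C\eps (1 + \eps R) \langle t \rangle^3 \int_0^t \frac{ \|\gs(\sigma)\|_{\Hc^s_\nu} }{\langle \sigma \rangle^{s-4}} \, \dd \sigma,
$$
which by definition of $N_{\sigma,s,\nu}$ is bounded by $C\eps R \, \langle t \rangle^3 \int_0^t \langle \sigma \rangle^{-(s-7)} N_{\sigma,s,\nu}(\gs) \, \dd \sigma$.

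Finally, dividing by $\langle t \rangle^3$, taking the supremum over $t \in [0,T]$, and collecting all contributions, I obtain an integral inequality
$$
N_{\tau,s,\nu}(\gs) \leq C(R_0 + \eps R^2)(1 + \eps R) + C\eps R \int_0^\tau \frac{N_{\sigma,s,\nu}(\gs)}{\langle \sigma \rangle^{s-7}} \, \dd \sigma,
$$
valid for every $\tau \leq T$. Since $s \geq 7$ makes the right-hand integrand integrable against $N_{\sigma,s,\nu}$, Gronwall's lemma yields $N_{T,s,\nu}(\gs) \leq C(R_0 + \eps R^2)(1 + \eps R) e^{C\eps R}$, which is the claimed estimate. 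The main technical obstacle is the careful distribution of the $s$ Sobolev derivatives and of the $\nu$-weight between the source $\zs$, the kernel $\eta$, and the factor $\gs(\sigma)$ in the nonlinear term, especially accounting for the extra $nk\ssf(\sigma) - k\xi$ factor; this bookkeeping is exactly what forces $s \geq 7$ and mirrors the continuous case of \cite{RF}, while the discrete-time substitution $\sigma \leadsto \ssf(\sigma)$ is absorbed into constants through Lemma \ref{approx}.
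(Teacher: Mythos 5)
Your proposal diverges from the paper's proof in a way that introduces a genuine gap. The paper proves Proposition \ref{propgs} by \emph{energy estimates}: it differentiates $\|D\gs(t)\|_{L^2}^2$ in time with $D = D^{m,p,q}$, uses that $\langle D\gs, \Lc_{\ssf(t)}[\gs]\, D\gs\rangle_{L^2} = 0$ (antisymmetry of the Hamiltonian transport operator), so that only the commutator $[D,\Lc_{\ssf(t)}[\gs]]$ remains, and then invokes Lemma \ref{lemcom} to control that commutator by \emph{lower-order} norms of $\gs$. You instead propose to estimate $\|\gs(t)\|_{\Hc^s_\nu}$ directly from the Duhamel/Fourier formula \eqref{fourier2} and close by Gronwall.

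The problem is a derivative loss in the nonlinear Duhamel term, which your bookkeeping hides. After the shift $\xi' = \xi - k\ssf(\sigma)$, $m = n-k$, the prefactor $nk\ssf(\sigma) - k\xi$ becomes $mk\ssf(\sigma) - k\xi'$ (using $k^2 = 1$), so the quantity you must place in $L^2_{m,\xi'}$ is
$$
(m+k)^p (\xi' + k\ssf(\sigma))^q \bigl(mk\ssf(\sigma) - k\xi'\bigr)\,\partial_{\xi'}^\gamma \hat\gs_m(\sigma,\xi').
$$
Expanding, the worst monomials in $(m,\xi')$ have total degree $p+q+1$; so if $p + q \le s$ you are forced to control $\|\gs(\sigma)\|_{\Hc^{s+1}_\nu}$, not $\|\gs(\sigma)\|_{\Hc^s_\nu}$ as you write. (This is not an artifact of the discretization — the same loss occurs already for the continuous equation, which is precisely why \cite{RF} uses energy estimates.) With the correct exponent your Gronwall inequality does not close; the integrand involves a strictly higher norm than the one you are trying to bound, and the argument is circular. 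The Fourier/Duhamel approach works fine for $M_{T,s-1}(\zs)$ in Proposition \ref{propzeta} because there one only evaluates $\hat\gs_n$ along the curve $\xi = n\ssf(t)$ — no $L^2_\xi$ integration and no weight to commute through — but it cannot produce a self-improving bound on the full $\Hc^s_\nu$ norm. To repair the proof you need the antisymmetry cancellation $\langle D\gs, \Lc_{\ssf(t)}[\gs]\, D\gs\rangle_{L^2} = 0$ together with the commutator estimate \eqref{com1} of Lemma \ref{lemcom}, which reduces the order by one and is what allows Gronwall to close.
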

 
 \begin{proof}
  To prove Proposition \ref{propgs}, we shall use energy estimates.
  We set $\mathcal{L}_t[\gs]$ the operator
 $$\Lc_t[\gs] f = \{\phi(t,\gs),f\}$$ such 
 that $\gs$ solves the equation 
 $$\partial_t \gs = \Lc_{\ssf(t)}[\gs(t)]( \eta + \varepsilon \gs).$$
For any linear operator $D$, we thus have by standard manipulations 
 that 
\begin{eqnarray*}
\frac{\dd}{\dd t} \Norm{D\gs(t)}{L^2}^2 &=& 2 \varepsilon  \langle D \gs(t), D( \Lc_{\ssf(t)} [\gs(t)] \gs(t)) \rangle_{L^2} + 2 \langle D\gs(t), D( \Lc_{\ssf(t)} [\gs(t)](\eta) )\rangle_{L^2}\\
&=& 2\varepsilon \langle D\gs(t),  \Lc_{\ssf(t)} [\gs(t)] D \gs(t) \rangle_{L^2}  + 2\varepsilon \langle D\gs(t), [D, \Lc_{\ssf(t)}[\gs(t)]] \gs(t)\rangle_{L^2}\\
&& + 2 \langle D\gs(t), D( \Lc_{\ssf(t)} [\gs(t)](\eta) )\rangle_{L^2},
\end{eqnarray*}
where $[D,\Lc_{\ssf(t)}]$ denotes the commutator between the two operators $D$ and $\Lc_{\ssf(t)}$.  
The first term in the previous equality vanishes since $\mathcal{L}_{\ssf(t)}[\gs]$ is the transport operator associated with a divergence free Hamiltonian vector field.  
Consequently,  we get that  
\begin{multline}
\label{energie1}
 {\dd \over \dd t}\Norm{D\gs(t)}{L^2}^2 \leqslant    2\varepsilon\ \Norm{D\gs(t)}{L^2} \Norm{[D,\Lc_{\ssf(t)}[\gs(t)]] \gs(t)}{L^2} 
\\+ 2 \Norm{ D\gs(t)}{L^2}\Norm{ D ( \Lc_{\ssf(t)} [\gs(\sigma)] (\eta) ) }{L^2}. 
\end{multline}
 To get the estimates of Proposition \ref{propgs}, we shall use the previous estimates with the operator 
$D = D^{m,p,q}$ defined as the   Fourier  multiplier by  $ k^p \xi^q  \partial^m_\xi$ for  $(m,p,q) \in \mathbb{N}^{3d}$ such that $p + q \leqslant s $, $m \leqslant \nu$ and the definition \eqref{defsob} of the $\Hc^{s}_{\nu}$ norm.  
  To evaluate the right hand-side of \eqref{energie1}, we shall use the following Lemma, whose proof is given in Appendix (see also \cite{RF}). 
  \begin{lemma}
  \label{lemcom}
   For   $p + q \leqslant  \gamma$ and $r \leqslant \nu$, and functions $h(x,v)$ and $g(x,v)$, we have the estimates
   \begin{eqnarray}
   \label{com1}
    & &  \| \big[ D^{r,p,q} ,  \mathcal{L}_{\sigma}[g] \big] h \|_{L^2} \leq C \big( m_{\sigma, \gamma + 1}(\zeta)  \|h\|_{\Hc^1_{\nu}} +  m_{\sigma, 2}(\zeta) \|h\|_{\Hc^\gamma_{\nu}}\big),  \\
  \label{com2}   & & \|D^{r,p,q} \big(   \mathcal{L}_{\sigma}[g] \big)h   \|_{L^2} \leq C \big(  m_{\sigma, \gamma + 1}(\zeta)  \|h\|_{\Hc^{1}_{\nu}}
  + m_{\sigma, 2}(\zeta) \|h\|_{\Hc^{\gamma + 1 }_{\nu}}\big),
      \end{eqnarray}
      for all $\sigma$, 
   where the sequence $\zeta_{k}$ is defined by   $\zeta_{k}= \hat g_{k}( k\sigma)$, $k \in \{\pm 1\}$, and where
   $$  m_{\sigma, \gamma}(\zeta) = \langle \sigma \rangle^{\gamma} \Big(\sup_{k \in \{\pm 1\}}  |\zeta_k|\Big),$$
   with a constant $C$ depending only on $\gamma$, and in particular, not depending on  $\sigma$.
   \end{lemma} 
  Let us finish the proof of  Proposition \ref{propgs}.  By using the previous lemma with $\gamma = s$, $\sigma= \ssf(t)$, $g= \gs(t)$ and $h= \gs(t)$ or 
    $h= \eta$, we obtain from   \eqref{energie1}    that
  \begin{multline*}  { \dd \over \dd t } \|\gs(t) \|_{\Hc^s_{\nu}}^2  \leq  C \langle t \rangle^2 m_{t, s-1}(\zs(t)) \big( \| \eta \|_{\Hc^1_{\nu}} + \eps  \|\gs(t)\|_{\Hc^1_{\nu}} \big)  \|\gs(t)\|_{\Hc^s_{\nu}}  \\+   { C \over \langle t \rangle^{s - 3}} m_{t, s-1}(\zs(t))
   \| \eta\|_{\Hc^{s+1}_{\nu}} \| \gs(t) \|_{\Hc^s_{\nu}} 
   +  { C \eps  \over \langle t \rangle^{s - 3}} m_{t, s-1}(\zs(t)) \|\gs(t) \|_{\Hc^s_{\nu}}^2.
   \end{multline*}
    This yields, using the fact that $M_{t,\gamma}(\zs) = \sup_{\sigma \in [0,t]} m_{\sigma,\gamma}(\zs(\sigma))$,
  $$ \|\gs(t)\|_{\Hc^s_{\nu}} \leq \|\gs(0)\|_{\Hc^s_{\nu}} + C \langle t \rangle^3 M_{t, s-1}(\zs) \big(  \|\eta\|_{\Hc^{s+1}_{\nu}}+ \eps R \big) +  C \eps R \int_{0}^t { 1 \over \langle \sigma \rangle^{s - 3}}
   \| \gs(\sigma) \|_{\Hc^{s}_{\nu}} \, \dd\sigma$$
    for $t \in [0, T]$.
     From the Gronwall inequality, we thus obtain
     $$ \|\gs(t)\|_{\Hc^s_{\nu}} \leq  \Big(\|\gs(0)\|_{\Hc^s_{\nu}} + C \langle t \rangle^3 M_{t, s-1}(\zs) \big(  \|\eta\|_{\Hc^{s+1}_{\nu}}+ \eps R \big)\Big) e^{ C \eps R \int_{0}^{ + \infty}{ \dd \sigma \over \langle \sigma \rangle^{s-3}}}.$$
      By using Proposition \ref{propzeta}, this yields 
      $$  N_{T, s,\nu}(\gs) \leq \Big( R_{0} +  C(R_{0}+  \eps R^2)  ( 1 + \eps R)  \Big) e^{ C\eps  R}.$$
       This ends the proof of Proposition \ref{propgs}.  
 \end{proof}

   \subsection{Estimate of $\| \gs \|_{ \Hc^{s-4}_{\nu}}$}
   To close the argument, it only remains to estimate  $\| \gs \|_{ \Hc^{s-4}_{\nu}}$.

    \begin{proposition}
    \label{propgs-4}
    Assuming that $\eta \in \Hc^{s+2}_{\nu}$ verifies the assumption {\bf (H)}, then there exists $C>0$ and $h_0>0$ such that for every $T>0$ and $h \in (0,h_0]$, every  solution of \eqref{julienclerc}  such that
 $ Q_{T, s,\nu} (\gs) \leq R$ enjoys the estimate
    $$ \| \gs(t) \|_{\Hc^{s-4}_{\nu}}  \leq  C\big( R_{0} +  \eps R^2) e^{ C \eps R}, \quad \forall t \in [0, T].$$
    \end{proposition}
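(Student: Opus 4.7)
The strategy will mirror the proof of Proposition \ref{propgs}, but by working at the lower regularity level $\Hc^{s-4}_\nu$ rather than $\Hc^s_\nu$ I expect to gain enough extra time decay in the source terms of the energy estimate to obtain a \emph{uniform}-in-time bound, instead of a bound that grows polynomially in $t$.

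The plan is to start from the energy estimate \eqref{energie1} applied to $D = D^{m,p,q}$ with $p+q \leq s-4$ and $m \leq \nu$, which, upon summation over such multi-indices, controls the full $\Hc^{s-4}_\nu$ norm through the definition \eqref{defsob}. I then invoke the commutator Lemma \ref{lemcom} with $\gamma = s-4$ and $\sigma = \ssf(t)$, taking $h = \gs(t)$ for the commutator term \eqref{com1} and $h = \eta$ for the forcing term \eqref{com2}. This yields contributions controlled by $m_{\ssf(t),s-3}(\zs)$ and $m_{\ssf(t),2}(\zs)$, multiplied by appropriate norms of $\gs$ and $\eta$.

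The crucial observation is that $M_{T,s-1}(\zs)$ has already been bounded by $C(R_0+\eps R^2)$ in Proposition \ref{propzeta}, which by definition of $M_{T,s-1}$ gives $|\zs_k(t)|\leq C(R_0+\eps R^2)/\langle t\rangle^{s-1}$. Combined with Lemma \ref{approx}, which ensures $\langle\ssf(t)\rangle \sim \langle t\rangle$, this yields
$$
m_{\ssf(t),s-3}(\zs) \leq \frac{C(R_0+\eps R^2)}{\langle t\rangle^{2}}, \qquad m_{\ssf(t),2}(\zs) \leq \frac{C(R_0+\eps R^2)}{\langle t\rangle^{s-3}},
$$
both of which are integrable on $[0,+\infty)$ for $s\geq 5$. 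Using $\|\gs\|_{\Hc^1_\nu}\leq \|\gs\|_{\Hc^{s-4}_\nu}$ (since $s-4\geq 1$) and $\|\eta\|_{\Hc^{s-3}_\nu}<\infty$ from the assumption $\eta\in \Hc^{s+2}_\nu$, I arrive at a differential inequality of the form
$$
\frac{d}{dt}\|\gs(t)\|_{\Hc^{s-4}_\nu} \leq \frac{C(R_0+\eps R^2)}{\langle t\rangle^{2}}\bigl(\eps\,\|\gs(t)\|_{\Hc^{s-4}_\nu} + \|\eta\|_{\Hc^{s-3}_\nu}\bigr).
$$

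Applying Gronwall's inequality, using $\|\gs(0)\|_{\Hc^{s-4}_\nu}\leq R_0$ and absorbing the constant $\int_0^{+\infty}\langle\sigma\rangle^{-2}\,\dd\sigma <\infty$ into $C$, yields the claimed bound $\|\gs(t)\|_{\Hc^{s-4}_\nu}\leq C(R_0+\eps R^2)\exp(C\eps R)$ uniformly for $t\in[0,T]$. I do not anticipate a genuine obstacle here: the argument is structurally identical to the proof of Proposition \ref{propgs}, and the only real point is the bookkeeping that verifies that the four-derivative margin ($s \mapsto s-4$) is precisely what is needed to convert the time-growing source term appearing in Proposition \ref{propgs} into one that is time-integrable, so that Gronwall produces a uniform rather than a polynomially-growing bound.
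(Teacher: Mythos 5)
Your proposal is correct and matches the paper's proof essentially step by step: apply the energy identity \eqref{energie1} with $D = D^{m,p,q}$ for $p+q\le s-4$, invoke the commutator Lemma \ref{lemcom} with $\gamma=s-4$, use Proposition \ref{propzeta} and Lemma \ref{approx} to turn $m_{\ssf(t),s-3}(\zs)$ into an integrable $O(\langle t\rangle^{-2})$ coefficient, and close by Gronwall. The only cosmetic difference is that the paper absorbs $m_{\ssf(t),2}$ into $m_{\ssf(t),s-3}$ (since $\langle\cdot\rangle\ge 1$) and uses $M_{t,s-1}(\zs)\le R$ for the Gronwall coefficient so as to land exactly on $e^{C\eps R}$, but this changes nothing of substance.
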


 \begin{proof}
 We use again \eqref{energie1} with $D= D^{m,p,q}$ but now with $p+q  \leq s-4$.
  By using Lemma \ref{lemcom}, we find
  \begin{equation}
  \label{gs-4scat}
   {\dd \over \dd t} \|\gs(t)\|_{\Hc^{s-4}_{\nu}}^2 \leq  C m_{t, s-3}(\zs(t)) \big(  \| \eta\|_{\Hc^{s-3}_{\nu}} \|\gs(t)\|_{\Hc^{s-4}_{\nu}} +\eps \|\gs(t) \|_{\Hc^{s- 4}_{\nu}}^2  \big).\end{equation}
   This yields
   \begin{multline*}
    \|\gs(t)\|_{\Hc^{s-4}_{\nu}} \leq \|\gs(0)\|_{\Hc^{s-4}_{\nu}}  \\
    +  C \| \eta \|_{\Hc^{s-3}_{\nu}} M_{t, s-1}(\zs) \int_{0}^t { 1 \over \langle \sigma \rangle^2} \, \dd\sigma + C \eps M_{t, s-1}(\zs)
    \int_{0}^t { 1 \over \langle \sigma \rangle^2}  \|\gs(\sigma) \|_{\Hc^{s-4}_{\nu}} \, \dd \sigma.
    \end{multline*}
    By using Proposition \ref{propzeta}, we thus get
    $$ \|\gs(t)\|_{\Hc^{s-4}_{\nu}}    \leq C\big( R_{0} +  \eps R^2) + C \eps R  \int_{0}^t  { 1 \over \langle \sigma \rangle^2}\|\gs(\sigma) \|_{\Hc^{s-4}_{\nu}} \, \dd \sigma.$$
    From the Gronwall inequality, we  finally find
    $$ \| \gs(t) \|_{\Hc^{s-4}_{\nu}}  \leq C\big( R_{0} +  \eps R^2) e^{ C \eps R}.$$
     This ends the proof of Proposition \ref{propgs-4}.
 
 \end{proof}
 
 \subsection{Proof of Theorem \ref{maintheod} and estimate \eqref{scatnum}}
 The proof of Theorem \ref{maintheod} follows from the a priori estimates in Propositions \ref{propzeta}, \ref{propgs} and \ref{propgs-4} and a continuation argument.
  Indeed, by combining the estimates of these three propositions, we get that
  $$ Q_{T, s,\nu}(\gs) \leq  C (R_{0}+  \eps R^2)  ( 1  + \eps R)  e^{ C \eps  R}$$ 
  assuming that $Q_{T, s,\nu}(\gs) \leq R$. Consequently, let us choose $R$ such that
    $R> C R_{0}$, then for $\eps$ sufficiently small we have $ R >C (R_{0}+  \eps R^2)  ( 1  + \eps R)   e^{ C \eps  R}$ and hence by usual continuation argument, we obtain that
    the estimate $Q_{T, s,\nu}(\gs) \leq R$ is valid for all times.
    
To prove \eqref{scatnum}, let us define $g_h^\infty(x,v)$ by
     $$
     g^\infty_h(x,v) = \gs(0,x,v) + \int_0^{+\infty} \mathcal{L}_{\ssf(\sigma)}[\gs(\sigma)](\eta + \epsilon \gs(\sigma)) \dd \sigma, 
     $$
        To justify the convergence of the integral, we note that thanks to  \eqref{com2}, we have for all $\sigma$
     $$
     \left\|\mathcal{L}_{\ssf(\sigma)}[\gs(\sigma)](\eta + \epsilon \gs(\sigma))\right\|_{\Hc^{s-4}_{\nu}} \leq   C\Big( m_{\sigma,s-3}(\zs(\sigma)) \left\| \eta + \epsilon \gs(\sigma) \right\|_{\Hc^{1}_{\nu}} + m_{\sigma,2}(\zs(\sigma)) \left\| \eta + \epsilon\gs(\sigma) \right\|_{\Hc^{s-3}_{\nu}} \Big),
     $$
     where $\zs_{k}(t)=\hat{\gs}_{k}(t,k\ssf(t)),$ $k=\pm1,$ and
     $$
     m_{\sigma,\gamma}(\zs(\sigma)) = \langle \sigma \rangle^{\gamma}\left(\sup_{k=\pm 1}\left|\zs_{k}(\sigma)\right|\right).
     $$
     By interpolation, we have
     $$
     \|\eta + \epsilon\gs(\sigma)\|_{\Hc^{s-3}_{\nu}} \leq C \|\eta + \epsilon \gs(\sigma) \|_{\Hc^{s-4}_{\nu}}^{3 \over 4} \| \eta +\epsilon\gs(\sigma) \|_{\Hc^s_{\nu}}^{1 \over 4},
     $$
     and thus, using the bound provided by Theorem \ref{maintheod}, we have
      $$
      \|\eta + \epsilon \gs(\sigma)\|_{\Hc^{s-3}_{\nu}} \leq  C(R)   \langle \sigma \rangle^{ 3 \over 4}.
      $$
     Using again    Theorem \ref{maintheod}, we thus find that 
       \begin{equation}
       \label{enplus1}
        \left\|\mathcal{L}_{\ssf(\sigma)}[\gs(\sigma)](\eta + \epsilon \gs(\sigma))\right\|_{\Hc^{s-4}_{\nu}} \leq C(R) \left({ 1 \over \langle \sigma \rangle^{  2  } } +  { 1 \over   \langle \sigma \rangle^{s -3 - { 3 \over 4}} }\right),
       \end{equation}
        and $g_{h}^{\infty}(x,v)$ is then well defined, and belongs to $\Hc^{s-4}_{\nu}.$\\
        Since we have for all $t$
        $$
        \gs(t)-g_{h}^{\infty}(x,v) = \int_{t}^{+\infty} \mathcal{L}_{\ssf(\sigma)}[\gs(\sigma)](\eta + \epsilon \gs(\sigma)) \dd \sigma,
        $$
       we find by using again \eqref{enplus1} that 
      $$ \| \gs(t) - g_h^{\infty}\|_{\Hc^{s-4}} \leq C(R) \Big( \int_{t}^{+ \infty} { 1 \over \langle \sigma \rangle^{  2  } } +  { 1 \over   \langle \sigma \rangle^{s -3 - { 3 \over 4}} } \, \dd\sigma \Big)
       \leq {C(R)  \over  \langle  t  \rangle }.
       $$
      In a similar way, by using again \eqref{com2}, we have for $r \leq s-4$ and $r \geq 1$, 
   $$   \| \gs(t) - g_h^{\infty}\|_{\Hc^{r}_{\nu}} \leq C(R) \Big( \int_{t}^{+ \infty} { 1 \over \langle \sigma \rangle^{  s- r - 2  } } +  { 1 \over   \langle \sigma \rangle^{s -3}  }\, \dd\sigma \Big)
       \leq C(R) \big( { 1 \over  \langle  t  \rangle^{ s- r - 3 }  } +  { 1 \over   \langle  t  \rangle^{s - 4} } \big) \leq  {C(R)  \over  \langle  t  \rangle^{ s- r - 3 } }, $$ 
       which gives the result using the fact that $\gs(nh) = g^{n}$ the solution given by the numerical scheme.

\section{Proof of the convergence estimate \eqref{convergence1}}       
      We shall now prove the convergence estimate \eqref{convergence1}. Note that in view of the previous result and of the analysis in \cite{RF}, the functions $\gs(t,x,v)$ and $g(t,x,v)$ satisfy the same estimates. In particular, we can assume that $Q_{T,s, \nu}(\gs),$ $Q_{T,s, \nu-1}(\gs),$ $Q_{T,s, \nu}(g)$ and $Q_{T,s, \nu-1}(g)$ are both uniformly bounded by the same constant $R,$ provided that $\nu >3/2.$\\
      
      By using the equations \eqref{eq:vp3} and \eqref{julienclerc}, we get that $\delta = g- \gs$ solves the equation
      \begin{equation}
      \label{eqdiff}
      \partial_{t} \delta(t) = \{ \phi (t, \delta(t)), \eta \} +  \eps \{\phi(t, \delta(t)), g(t) \}  + \eps \{\phi(t, g(t)), \delta(t)\} -  \eps \{ \phi(t, \delta(t)), \delta(t) \} + \mathcal{R}
      \end{equation}
      with 
    \begin{equation}
    \label{Rhdef} 
      \mathcal{R}(t,x,v)=  \{ \phi(t, \gs (t)) -\phi(\ssf(t), \gs (t)), \eta \} +  \eps \{ \phi(t, \gs(t))- \phi(\ssf(t), \gs(t)), \gs(t) \}
     \end{equation}       
        and with zero initial data.
      It will be useful to use the expression of $\mathcal{R}$ in Fourier which reads
    \begin{multline}
    \label{Rfourier}
   \hat{\mathcal{R}}_{n}(t, \xi)
   =  np_{n}  \Big( \hat\gs_{n}(t, n t) \hat \eta_{0}(\xi- n t) (nt- \xi) - \hat  \gs_{n}(t, n \ssf(t)) \hat \eta_{0}(\xi- n \ssf(t)) (n\ssf(t) - \xi) \Big) \\
     +  \eps \sum_{k=\pm1}  kp_{k} \Big(  \hat\gs_{k}(t, k t) \hat \gs_{n-k} (t, \xi- kt) (nt- \xi)
      -    \hat\gs_{k}(t, k \ssf(t)) \hat \gs_{n-k} (t, \xi- k\ssf(t)) (n \ssf(t)- \xi) \Big).
    \end{multline}

Let $T$ be a positive real number.  By using the  weighted norms defined in  \eqref{eq:NetM}, we  now consider  for $s \geq 8$ and $\nu>3/2,$ the quantity
$$Q_{T,s-2, \nu-1}(\delta)= M_{T,s-3}(d) + N_{T,s-2, \nu-1}(\delta) + \sup_{t\in[0,T]} \left\|\delta(t) \right\|_{\Hc^{s-6}_{\nu-1}} $$
with $N_{T,s,\gamma}$ and $M_{T,\gamma}$ defined in \eqref{eq:NetM}, and where we set
$$ d_{k}(t)= \hat g_{k}(t,kt)- \hat \gs_{k}(t, kt).$$
We shall prove the following result:
\begin{proposition}
\label{ordre1}
For  $s\geq 8$, $\nu>3/2$,  assume that $\eta \in \Hc^{s+4}_{\nu}$ satisfies the assumption {\bf(H)}. Then there exists $R_{1}>0,$ $h_{0}>0$ and $\epsilon_{0}>0$ such that for every $h\in(0,h_{0}],$ every $\eps \in (0,\eps_{0}]$ and every $T\geq0,$ the solution of \eqref{eqdiff} satisfies the estimate
$$Q_{T,s-2,\nu-1}(\delta)\leq R_{1}h.$$
\end{proposition}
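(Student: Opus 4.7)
The plan is to prove Proposition \ref{ordre1} by adapting the three-step a priori scheme of Theorem \ref{maintheod} (Propositions \ref{propzeta}, \ref{propgs} and \ref{propgs-4}) to the difference equation \eqref{eqdiff}, and then to close the estimate by a continuation argument on $Q_{T,s-2,\nu-1}(\delta)$. The new element compared with the existence argument is the need to control the consistency defect $\mathcal{R}$ of order $h$ coming from \eqref{Rhdef}.

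I would first analyse $\mathcal{R}$. Writing the telescoping formula
$$\phi(t,\gs(t))-\phi(\ssf(t),\gs(t))=\frac{1}{2}\sum_{k=\pm 1}e^{ikx}\int_{\ssf(t)}^{t}\Bigl(ikv\,e^{ik\tau v}\hat{\gs}_{k}(t,k\tau)+k\,e^{ik\tau v}\partial_{\xi}\hat{\gs}_{k}(t,k\tau)\Bigr)d\tau,$$
the bound $|t-\ssf(t)|\leq h$ produces a prefactor $h$ at the cost of one extra factor $v$ and one extra Fourier derivative inside the integrand. Inserting this into \eqref{Rhdef} and taking the Poisson bracket with $\eta$ or $\gs(t)$, a computation entirely parallel to the one leading to Lemmas \ref{lemvolterra} and \ref{lemcom} (playing between Lemma \ref{lememb} and the decay of $\zs$ and $\gs$ guaranteed by Theorem \ref{maintheod}) yields pointwise bounds on $\hat{\mathcal{R}}_{n}(t,\xi)$ that are $O(h)$ with one fewer velocity weight and one fewer derivative than those available for the principal terms in \eqref{eqdiff}. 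The assumption $\nu>3/2$ ensures that after losing one weight we still have $\nu-1>1/2$, so that Lemma \ref{lememb} and the weighted space $\Hc^{\cdot}_{\nu-1}$ remain usable; the assumption $s\geq 8$ leaves enough derivative margin to keep the required algebraic decays.

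I would then mimic Proposition \ref{propzeta}. The quantity $d_{n}(t)=\hat{g}_{n}(t,nt)-\hat{\gs}_{n}(t,nt)$ satisfies a Volterra equation obtained by subtracting the equations satisfied by $\zeta_{n}$ and $\zs_{n}$. Its right-hand side splits into: (i) a linear \emph{kernel defect} $\int_{0}^{t}\bigl(K(n,t-\sigma)-K(n,\ssf(t)-\ssf(\sigma))\bigr)\zs_{n}(\sigma)\,d\sigma$, bounded by $Ch$ in the $M_{T,s-3}$ norm exactly as the term $G$ treated in \eqref{G+1}; (ii) nonlinear cross-terms containing $\delta$ or $d$, controlled by $C\eps R\,Q_{T,s-2,\nu-1}(\delta)$ using the same bilinear integral estimates as for $F^{1},F^{2},I^{1},I^{2}$ but one regularity level lower; (iii) the contribution of $\mathcal{R}$, of size $Ch$ by the previous step. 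Lemma \ref{lemvolterra} applied with $\gamma=s-3$, together with absorption of $hM_{T,s-3}(d)$ on the left-hand side for $h$ small, then gives $M_{T,s-3}(d)\leq C\bigl(h+\eps R\,Q_{T,s-2,\nu-1}(\delta)\bigr)$. Energy estimates for $N_{T,s-2,\nu-1}(\delta)$ and $\sup_{[0,T]}\|\delta\|_{\Hc^{s-6}_{\nu-1}}$ follow the lines of Propositions \ref{propgs} and \ref{propgs-4}: apply $D^{m,p,q}$ with the appropriate range of $p+q$ to \eqref{eqdiff}, pair with $D^{m,p,q}\delta$, use Lemma \ref{lemcom} (with $d$ and $\zs,\zeta$ playing the roles of the $\zeta$ sequences), bound the contribution of $\mathcal{R}$ by $Ch$ via Step 1, and invoke Gronwall against the integrable weight $\langle\sigma\rangle^{-2}$ inherited from $m_{\sigma,s-3}(\zs)$.

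Combining the three estimates and performing a standard continuation argument on $Q_{T,s-2,\nu-1}(\delta)$ then yields $Q_{T,s-2,\nu-1}(\delta)\leq R_{1}h$ for $\eps,h$ small. The main technical obstacle lies entirely in the first step: one must verify that $\mathcal{R}$ is genuinely $h$ times a quantity which, in spite of losing one velocity weight and one derivative, still enjoys enough polynomial decay in $t$ to be summable by the Volterra and the energy arguments. Once this careful bookkeeping of weights and derivatives is in place, the subsequent steps are structurally identical to those in the proof of Theorem \ref{maintheod}, with a systematic shift $(s,\nu)\mapsto(s-2,\nu-1)$ justifying the hypotheses $s\geq 8$ and $\nu>3/2$.
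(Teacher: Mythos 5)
Your plan matches the paper's proof: the paper likewise writes a Volterra equation for $d_{n}$, applies Lemma \ref{lemvolterra} with the kernel/consistency defect bounded by $Ch$ through a first-order Taylor expansion in $\xi$ (at the cost of one weight and one derivative, whence the shift to $(s-2,\nu-1)$), then runs the two energy estimates of Propositions \ref{propgs} and \ref{propgs-4} against the Lemma \ref{lemcom} machinery with the $\mathcal{R}$-contribution absorbed via \eqref{LieR}, and closes by continuation. One small imprecision: $d_{n}(t)=\hat g_{n}(t,nt)-\hat\gs_{n}(t,nt)$ is \emph{not} $\zeta_{n}-\zs_{n}$, so the Volterra equation you want comes from the $\delta$-equation \eqref{eqdiff} in Fourier with $\xi=nt$ (as in \eqref{eqdn}), not from subtracting the $\zeta$ and $\zs$ equations — although, as the paper's remark after Proposition \ref{ordre1} notes, working with $\tilde d_{k}=\zeta_{k}-\zs_{k}$ would also give the first-order bound, so this does not affect the conclusion here.
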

 Proposition \ref{ordre1} clearly implies the convergence estimate \eqref{convergence1}. It actually proves that the interpolation $\gs(t)$ of the sequence of functions $g^{n}(x,v)$ given by the splitting methods (Strang or Lie) is always at least an approximation of order one of the exact solution $g(t)$ at all times.  The proof will use the same steps 
 as in the proof of Theorem \ref{maintheod}.\\
 
 \begin{remark}
\notag
 We mentioned that $\zs_{k}(t)$ is expected to be an approximation of $\zeta_{k}(t).$ By Taylor expanding in $\xi,$ it is indeed clear that Proposition \ref{ordre1}, together with the uniform bound on $Q_{T,s,\nu}(\gs)$ and $Q_{T,s,\nu}(g),$ implies that
 $$\sup_{\substack{t\in \mathbbm{R} \\ k=\pm1}} |\zeta_{k}(t)-\zs_{k}(t)| = \mathcal{O}(h).$$
 In fact, we could have proved without any major difference Proposition \ref{ordre1} using the quantity $\tilde{d}_{k}(t)=\zeta_{k}(t)-\zs_{k}(t)$ instead of $d_{k}(t).$ The use of $d_{k}(t)$ will however be crucial to prove the second order estimate, since it is clear that the quantity $\tilde{d}_{k}(0)$ does not scale in $h^{2}.$
 \end{remark}
 
%
Let us now begin the proof of Proposition  \ref{ordre1}. 

\subsection{Estimate of $M_{T,s-3}(d)$}

 We shall first estimate $ d_{k}(t)$,  $k=\pm1.$
\begin{proposition}
\label{propzetaz}
Assuming that $\eta \in \Hc^{s+4}_{\nu}$ satisfies the assumption {\bf{(H)}}, there exists $C>0,$ $\eps_{0}>0$ and $h_{0}>0$ such that for every $h\in(0,h_{0}],$ every $\eps \in(0, \eps_{0}]$ and every $T>0,$ every solution of \eqref{eqdiff} such that $Q_{T,s-2, \nu-1}(\delta) \leq R_{1}h$ enjoys the estimate
$$M_{T,s-3}(d)\leq C(R) h ( 1 + (\eps  + \eps h) R_{1}^{2}),$$
where $C(R)$ is a number that depends only on $R$ (one can take $C(R)=(1+R+R^{2})C$).
\end{proposition}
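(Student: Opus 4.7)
The plan is to derive a Volterra integral equation for $d_n(t)$ paralleling \eqref{zetanp1}, apply Lemma \ref{lemvolterra} to the linearized equation, and then carefully control both the discretization-error sources and the nonlinear perturbation. First I would write \eqref{fourier2} for $\gs$ and the analogous identity for $g$ (obtained by replacing $\ssf(\sigma)$ by $\sigma$ throughout, since $g$ solves \eqref{eq:vp3}). Setting $\xi = nt$ in both, subtracting, and using $\hat g_n(0,\cdot) = \hat\gs_n(0,\cdot)$, I obtain an equation of the form
\begin{equation*}
d_n(t) = \int_0^t K(n,t-\sigma)\, d_n(\sigma)\, \dd\sigma \;+\; F_n(t) + G_n(t) + \eps H_n(t),
\end{equation*}
with
$F_n(t) = \int_0^t K(n,t-\sigma)\, \Delta_n(\sigma)\,\dd\sigma$ and $\Delta_n(\sigma) = \hat\gs_n(\sigma,n\sigma) - \hat\gs_n(\sigma,n\ssf(\sigma))$, and $G_n(t) = \int_0^t [K(n,t-\sigma)-K(n,t-\ssf(\sigma))]\,\zs_n(\sigma)\,\dd\sigma$, while $H_n(t)$ collects the difference of the nonlinear contributions. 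Lemma \ref{lemvolterra} then gives $M_{T,s-3}(d) \leq C\, M_{T,s-3}(F + G + \eps H)$.

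For $F_n$, the mean-value theorem combined with Lemma \ref{approx} yields $|\Delta_n(\sigma)| \leq h\,\sup_{\tilde\xi}|\partial_\xi \hat\gs_n(\sigma,\tilde\xi)|$. Applying Lemma \ref{lememb} with one derivative in $\xi$ (this is where the hypothesis $\nu > 3/2$ is used, since it requires $\nu - 1 > 1/2$) together with the bound $N_{T,s,\nu}(\gs) \leq R$ gives $|\Delta_n(\sigma)| \leq C(R)\,h\,\langle\sigma\rangle^{3-s}$, and a standard convolution estimate in time yields $M_{T,s-3}(F) \leq C(R)\,h$. For $G_n$, I would repeat the computation concluding the proof of Proposition \ref{propzeta}: bound $K(n,t-\sigma) - K(n,t-\ssf(\sigma))$ by $h\,\sup|\partial_\tau K|$, exploit the extra regularity $\eta \in \Hc^{s+4}_\nu$ (needed exactly to afford the decay of $\partial_\tau K$), and combine with the a priori bound $M_{T,s-1}(\zs) \leq C R$ from Proposition \ref{propzeta} to conclude $M_{T,s-3}(G) \leq C(R)\,h$.

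For the nonlinear remainder $\eps H_n$, I would use a three-term telescoping of each integrand
\[
\zeta_k(\sigma)\hat g_{n-k}(\sigma,nt-k\sigma)(nk\sigma-knt) - \zs_k(\sigma)\hat\gs_{n-k}(\sigma,nt-k\ssf(\sigma))(nk\ssf(\sigma)-knt),
\]
splitting $\zeta_k-\zs_k = d_k + \Delta_k$, writing $\hat g_{n-k}(\sigma,nt-k\sigma) - \hat\gs_{n-k}(\sigma,nt-k\ssf(\sigma)) = \hat\delta_{n-k}(\sigma,nt-k\sigma) + O(h)\cdot\partial_\xi \hat\gs_{n-k}$, and treating the prefactor difference as an $O(h)$ perturbation. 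Each piece is estimated via Lemma \ref{lememb}, the uniform $Q_{T,s,\nu}$-bounds on $g$ and $\gs$, and the standing hypothesis $Q_{T,s-2,\nu-1}(\delta) \leq R_1 h$; as in Proposition \ref{propzeta} I would split integrals on $[0,t/2]\cup[t/2,t]$ and use the $\Hc^{s-4}_\nu$-type bound on the inner half to secure the $\langle t\rangle^{-(s-3)}$ decay. Contributions with one $\delta$-factor give pieces of size $\eps\,C(R)(R_1 h)\cdot h$ after the convolution, while the quadratic $\delta\cdot\delta$ term (originating from $-\eps\{\phi(t,\delta),\delta\}$ in \eqref{eqdiff}) produces the $\eps C(R) R_1^2 h^2$ piece. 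Summing, $M_{T,s-3}(\eps H) \leq \eps C(R)\,h\,(R_1^2 + hR_1^2)$, which combined with the $F,G$ estimates yields the stated bound. The main obstacle is the bookkeeping: each of the telescoped pieces must be controlled while preserving the sharp time decay $\langle t\rangle^{-(s-3)}$, and this is precisely what forces the loss of two derivatives relative to Proposition \ref{propzeta}, the drop $\nu\to\nu-1$ in the weight, and the enhanced regularity requirement $\eta \in \Hc^{s+4}_\nu$.
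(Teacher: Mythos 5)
Your proposal follows essentially the same strategy as the paper's proof: set up a Volterra integral equation for $d_n(t)$, reduce via Lemma~\ref{lemvolterra} to estimating source terms, extract an $O(h)$ factor from each discretization error by mean-value/Taylor arguments (this is indeed where $\nu>3/2$ enters, through Lemma~\ref{lememb} applied with one $\xi$-derivative), and handle the $k=n$ contributions with the $[0,t/2]\cup[t/2,t]$ splitting together with the $\Hc^{s-4}$-type bounds. The only differences are organizational rather than substantive: you split the linear discretization error (the paper's $H_n^L$) into two pieces $F_n+G_n$ corresponding to which argument carries $\sigma$ versus $\ssf(\sigma)$, and you merge the nonlinear $\delta$-terms $G_n$ with the nonlinear discretization error $\eps H_n^P$ into a single $\eps H_n$ estimated by a finer telescoping; the resulting estimates and final bound are identical, modulo some sloppiness in your last summation (the pure-discretization contribution from the nonlinear part, of size $\eps C(R)h$, is omitted from your $\eps H$ count, but it is already subsumed by the $C(R)h$ coming from $F,G$).
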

\begin{proof}
From \eqref{eqdiff}, we obtain by taking the Fourier transform, integrating in time  and setting $\xi= nt$  that
\begin{equation}
\label{eqdn} d_{n}(t)=   \int_{0}^{t}K(n,t-\sigma) d_{n}(\sigma) \, \dd \sigma +   G_{n}(t) + H_{n}(t)
\end{equation}
where
$$ H_{n}(t) =  \int_{0}^t \hat{ \mathcal{R}}_{n}(\sigma, nt)\, \dd \sigma $$
 and
\begin{multline*}
 G_{n}(t) = 
  \eps \sum_{k=\pm1} \int_{0}^t  kp_{k} \Big( d_{k}(\sigma) \hat g_{n-k} (\sigma, nt- k\sigma) + \zeta_{k}(\sigma) \hat \delta_{n-k}(\sigma, 
   nt-k \sigma) \\
    -  d_{k}(\sigma) \hat \delta_{n-k}(\sigma, 
   nt-k \sigma) \Big)
   n(\sigma- t ) \, \dd\sigma. 
  \end{multline*}
The  kernel $K(k,t)$ is  still defined by \eqref{kernel}.\\
By using Lemma \ref{lemvolterra}, we find the estimate
\begin{equation}
\label{bordeaux}
M_{T, s-3}(d) \leq  C\left(
   M_{T, s-3}(G) + M_{T, s-3}(H)\right).
 \end{equation}
  To estimate $M_{T, s-3}(G)$, we proceed as in the proof of Theorem \ref{maintheod}.
   We first split $ G = G^1 + G^2$ where $G^1$ corresponds to the term $k= - n$ in the sum  and $G^2$ corresponds to $k= n$.
   For $G^1$, we obtain
\begin{multline*}
  |G^1_{n}| \leq  C\varepsilon \int_{0}^t  (t-\sigma) \Big( { 1 \over \langle \sigma \rangle^{s-3}} { \langle \sigma \rangle^3 \over \langle t + \sigma \rangle^s}
   M_{\sigma, s-3}(d) N_{\sigma, s, \nu}(g)  \\+
    {  1   \over \langle \sigma \rangle^{s- 1}} { \langle \sigma \rangle^3 \over \langle t + \sigma \rangle^{s-2}  } M_{\sigma, s- 1}(\zeta) N_{\sigma, s-2, \nu-1}(\delta)
     \\ + { 1 \over \langle \sigma \rangle^{s-3}} {\langle \sigma \rangle^{3} \over \langle  t + \sigma \rangle^{s-2}}  M_{\sigma, s-3}(d)N_{\sigma, s-2, \nu-1}(\delta)
      \Big) \dd\sigma,
 \end{multline*}
 and hence that
 $$  |G^1_{n}| \leq  C\eps  (R R_{1}h +h^2 R_{1}^2) { 1 \over \langle t \rangle^{s-2}}$$
  since  $s \geq 8$.
 To estimate $G^2_{n}$, we can again write
 $$ G^2_{n}= J^1_{n}+ J^2_{n}$$ with
 $$ J^1_{n}= \int_{0}^{t\over 2}j_{n}\, \dd \sigma, \quad J^2_{n}= \int_{t\over 2}^t j_{n}\, \dd \sigma$$
 with 
 $$
  j_{n}= 
  \eps  np_{n} \Big( d_{n}(\sigma) \hat g_{0} (\sigma, nt- n\sigma) + \zeta_{n}(\sigma) \hat \delta_{0}(\sigma, 
   nt-n \sigma) 
    -  d_{n}(\sigma) \hat \delta_{0}(\sigma, 
   nt-n \sigma) \Big)
   n(\sigma- t ).
 $$  
 As in the proof of Proposition \ref{propzeta}, we can prove  by using the same estimates as above that
 $$  |J_{n}^1| \leq  C \eps  (R R_{1}h + R_{1}^2 h^2) { 1 \over \langle t \rangle^{s-2}}.$$
  To estimate $J_{n}^2$, we also proceed as in the proof of Proposition \ref{propzeta} and  write
 \begin{multline*}
  |J^2_{n}| \leq C \eps  \int_{{t \over 2}}^t (t- \sigma) \Big( { 1 \over \langle \sigma \rangle^{s-3}}   { 1 \over \langle t-\sigma \rangle^{s-4}}  M_{\sigma, s-3}(d)
   \|g\|_{\mathcal{H}^{s-4}_{\nu}} +  { 1 \over \langle \sigma \rangle^{s-1}}   { 1 \over \langle t-\sigma \rangle^{s-6}}  M_{\sigma, s-1}(\zeta)
   \|\delta\|_{\mathcal{H}^{s-6}_{\nu-1}}  \\+  { 1 \over \langle \sigma \rangle^{s-3}}   { 1 \over \langle t-\sigma \rangle^{s-6}}  M_{\sigma, s-3}(d)
   \|\delta\|_{\mathcal{H}^{s-6}_{\nu-1}} \Big) \, \dd\sigma. 
   \end{multline*} 
   This yields, since $s \geq 8$, 
  $$ | J_{n}^2 |   \leq C \eps  (R R_{1}h +  R_{1}^2 h^2) { 1 \over \langle t \rangle^{s-3}}.$$
  We have thus proven that
  $$ M_{T, s-3}(G) \leq   C \eps (R R_{1}h + R_{1}^2 h^2).$$
  
  It remains to estimate $M_{T, s-3}(H)$. We shall prove that
  $$  M_{T, s-3}(H) \leq C(R) h.$$
  At first, we can write
  \begin{equation}
  \label{Hdec} H_{n}= H_{n}^L + \eps H_{n}^P
  \end{equation}
   with
$$ H_{n}^L =  \int_{0}^t
  np_{n}  \Big( \hat\gs_{n}(\sigma, n \sigma) \hat \eta_{0}(nt- n \sigma) (n\sigma- n t  ) - \hat  \gs_{n}(\sigma, n \ssf(\sigma)) \hat \eta_{0}(nt- n \ssf(\sigma)) (n\ssf(\sigma) -  nt ) \Big) \, \dd\sigma $$
  and
  \begin{multline*}
  H_{n}^P = \int_{0}^t  \sum_{k=\pm1}  kp_{k} \Big(  \hat\gs_{k}(\sigma, k \sigma) \hat \gs_{n-k} (\sigma, nt - k\sigma) (n\sigma- n t)
      \\ -    \hat\gs_{k}(\sigma, k \ssf(\sigma)) \hat \gs_{n-k} (\sigma, nt - k\ssf(\sigma)) (n \ssf(\sigma)- nt ) \Big) \dd\sigma.
  \end{multline*}
  We shall focus on the estimate of $H_{n}^P,$ the estimate of $H_{n}^L$  is easier to obtain since $\eta$ can be assumed as smooth as we need (here $\eta \in \Hc^{s+4}_{\nu}$ suffices).\\
  By Taylor expanding up to the first order, we have
  $$\left| H_{n}^{P} \right|\leq h \sum_{k=\pm1}p_{k}\int_{0}^{t}\sup_{\alpha+\beta \leq 1} \sup_{\xi\in [k\ssf(\sigma) ; k\sigma]}\left| \partial_{\xi}^{\alpha}\hat{\gs}_{k}(\sigma,\xi) \partial_{\xi}^{\beta}\hat{\gs}_{n-k}(\sigma,nt-\xi) \right|\left|n\sigma-nt\right|\dd \sigma.$$
As previously, we distinguish the cases $k=n$ and $k=-n.$\\
When $k=-n,$ we have by Lemma \ref{lememb}
\begin{multline*}
\int_{0}^{t}\sup_{\alpha+\beta \leq 1} \sup_{\xi\in [-n\ssf(\sigma); -n\sigma]}\left| \partial_{\xi}^{\alpha}\hat{\gs}_{-n}(\sigma,\xi) \partial_{\xi}^{\beta}\hat{\gs}_{2n}(\sigma,nt-\xi) \right|\left|\sigma-t\right|\dd \sigma 
\\ \leq C \int_{0}^{t}  \frac{\langle \sigma \rangle^{6}N_{T,s,\nu}(\gs)N_{T,s,\nu}(\gs)}{\langle \sigma \rangle^{s}\langle t+s \rangle^{s-1}}\dd \sigma \leq \frac{CR^{2}}{\langle t \rangle^{s-1}},
\end{multline*}
since $s\geq 8.$\\
When $k=n,$ we split as previously the integral into
\begin{multline*}
\int_{0}^{t}\sup_{\alpha+\beta \leq 1} \sup_{\xi\in [n\ssf(\sigma) ; n\sigma]}\left| \partial_{\xi}^{\alpha}\hat{\gs}_{n}(\sigma,\xi) \partial_{\xi}^{\beta}\hat{\gs}_{0}(\sigma,nt-\xi) \right|\left|n\sigma-nt\right|\dd \sigma 
\\= \int_{0}^{t/2}\sup_{\alpha+\beta \leq 1} \sup_{\xi\in [n\ssf(\sigma) ; n\sigma]}\left| \partial_{\xi}^{\alpha}\hat{\gs}_{n}(\sigma,\xi) \partial_{\xi}^{\beta}\hat{\gs}_{0}(\sigma,nt-\xi) \right|\left|n\sigma-nt\right|\dd \sigma
\\+ \int_{t/2}^{t}\sup_{\alpha+\beta \leq 1} \sup_{\xi\in [n\ssf(\sigma) ; n\sigma]}\left| \partial_{\xi}^{\alpha}\hat{\gs}_{n}(\sigma,\xi) \partial_{\xi}^{\beta}\hat{\gs}_{0}(\sigma,nt-\xi) \right|\left|n\sigma-nt\right|\dd \sigma.
\end{multline*}
Using the same estimates as above, we have for the first term
$$\left| \int_{0}^{t/2}\sup_{\alpha+\beta \leq 1} \sup_{\xi\in [n\ssf(\sigma) ; n\sigma]}\left| \partial_{\xi}^{\alpha}\hat{\gs}_{n}(\sigma,\xi) \partial_{\xi}^{\beta}\hat{\gs}_{0}(\sigma,nt-\xi) \right|\left|n\sigma-nt\right|\dd \sigma \right| \leq \frac{CR^{2}}{\langle t \rangle^{s-1}}.$$
For the second term, we rather use the quantity $\left\| \gs \right\|_{\Hc^{s-4}_{\nu}}$ instead of $N_{T,s,\nu}(\gs).$ We obtain the estimate
\begin{multline*}
\left| \int_{t/2}^{t}\sup_{\alpha+\beta \leq 1} \sup_{\xi\in [n\ssf(\sigma) ; n\sigma]}\left| \partial_{\xi}^{\alpha}\hat{\gs}_{n}(\sigma,\xi) \partial_{\xi}^{\beta}\hat{\gs}_{0}(\sigma,nt-\xi) \right|\left|n\sigma-nt\right|\dd \sigma \right| 
\\ \leq C \int_{t/2}^{t} \frac{\langle \sigma \rangle^{3}N_{T,s,\nu}(\gs) \left\| \gs(\sigma) \right\|_{\Hc^{s-4}_{\nu}}}{\langle \sigma \rangle^{s}\langle t-s \rangle^{s-5}}\dd \sigma \leq \frac{CR^{2}}{\langle t \rangle^{s-3}},
\end{multline*}
since $s\geq 8.$\\
Hence
$$M_{T,s-3}(H^{P})\leq ChR^{2},$$
and by the same arguments
$$M_{T,s-3}(H^{L})\leq ChR^{2}.$$
Therefore
$$M_{T,s-3}(H)\leq ChR^{2},$$
which concludes the proof of proposition \ref{propzetaz}.

\end{proof}
\subsection{Estimate of $\left\| \delta  \right\|_{\Hc^{s-6}_{\nu-1}}$}
\begin{proposition}
\label{propgsg-5}
Assuming that $\eta \in \Hc^{s+4}$ satisfies the assumption {\bf{(H)}}, there exists $C>0,$ $\eps_{0}>0$ and $h_{0}>0$ such that for every $h\in(0,h_{0}],$ every $\eps \in (0,\eps_{0}]$ and every $T>0,$ every solution of \eqref{eqdiff} such that $Q_{T,s-2, \nu- 1}(\delta)\leq R_{1}h$  enjoys the estimate
$$ \sup_{t\in[0, T]}\left\|\delta(t) \right\|_{\Hc^{s-6}_{\nu-1}} \leq C(R) h ( 1 + (\eps  + \eps h) R_{1}^{2})( 1 + h R_{1})$$
where $C(R)$ is a number that depends only on $R$.
\end{proposition}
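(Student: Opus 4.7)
The strategy mirrors the proof of Proposition~\ref{propgs-4}, applied to the equation \eqref{eqdiff} for $\delta = g - \gs$, with the new ingredient being that the remainder $\mathcal{R}$ must be treated as a forcing term. I apply the differential operators $D = D^{m,p,q}$ (Fourier multiplier by $k^p \xi^q \partial_\xi^m$) with $p+q \leq s-6$ and $m \leq \nu-1$, so that summing the resulting $L^2$ estimates reconstructs the $\Hc^{s-6}_{\nu-1}$ norm. The basic identity
$$
\frac{1}{2}\frac{\dd}{\dd t}\|D\delta\|_{L^2}^2 = \langle D\delta, D(\mathcal{L}_t[\delta]\eta)\rangle + \eps\langle D\delta, D(\mathcal{L}_t[\delta]g)\rangle + \eps\langle D\delta, D(\mathcal{L}_t[g]\delta)\rangle - \eps\langle D\delta, D(\mathcal{L}_t[\delta]\delta)\rangle + \langle D\delta, D\mathcal{R}\rangle
$$
follows from \eqref{eqdiff}. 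The key observation is that the Fourier coefficient governing the bracket $\{\phi(t,\delta),\cdot\}$ is precisely $\hat{\delta}_k(t,kt) = d_k(t)$, so Proposition~\ref{propzetaz} provides control of exactly the right object.

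For the four bracket terms I apply Lemma~\ref{lemcom} with $\gamma = s-6$. The third term simplifies to a commutator $\eps\langle D\delta, [D,\mathcal{L}_t[g]]\delta\rangle$, since the diagonal contribution $\eps\langle D\delta, \mathcal{L}_t[g] D\delta\rangle$ vanishes by the divergence-free character of the Hamiltonian transport field. Each of the four bounds then splits into two pieces of the form $m_{t,s-5}(\cdot)\|\cdot\|_{\Hc^1_{\nu-1}}$ and $m_{t,2}(\cdot)\|\cdot\|_{\Hc^{s-5}_{\nu-1}}$; using $m_{t,s-5}(d) \leq M_{T,s-3}(d)/\langle t\rangle^{2}$ and $m_{t,2}(d) \leq M_{T,s-3}(d)/\langle t\rangle^{s-5}$, combined with $M_{T,s-3}(d) \leq C(R)h(1+(\eps+\eps h)R_1^2)$ from Proposition~\ref{propzetaz}, and using the a priori bounds on $\|\eta\|_{\Hc^{s-5}_{\nu-1}}$, $\|g\|_{\Hc^{s-5}_{\nu-1}}$, and $M_{T,s-1}(\zeta)\leq R$ (where $\zeta_k = \hat{g}_k(t,kt)$), these terms produce either the desired $h$-small forcing or a factor $\langle t\rangle^{-2}$ in front of $\|\delta\|_{\Hc^{s-6}_{\nu-1}}$ suitable for a Gronwall argument.

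The genuinely new work is the estimate of $\|D\mathcal{R}(t)\|_{L^2}$. Starting from the Fourier expression \eqref{Rfourier}, each elementary increment of the type $e^{iktv}\hat{\gs}_k(t,kt) - e^{ik\ssf(t)v}\hat{\gs}_k(t,k\ssf(t))$ is $\mathcal{O}(h)$ by first-order Taylor expansion in $t - \ssf(t)$; the expansion costs one $\xi$-derivative on $\hat{\gs}_k$ (equivalently one $v$-weight from the factor $kv$ coming out of $\partial_t e^{iktv}$), which is precisely what forces the weight drop from $\nu$ to $\nu-1$. Bookkeeping then proceeds in exact analogy with the estimate of $M_{T,s-3}(H)$ in Proposition~\ref{propzetaz}: I separate the linear contribution $H^L$ (involving $\eta$) from the nonlinear one $H^P$, and within each I split the Fourier sum into the mode $k=-n$ (which uses $N_{T,s,\nu}(\gs)$ and the large denominator $\langle t+\sigma\rangle$) and the mode $k=n$, the latter being further split into $[0,t/2]$ (again $N_{T,s,\nu}(\gs)$) and $[t/2,t]$ (using the stronger uniform bound on $\|\gs\|_{\Hc^{s-4}_{\nu}}$). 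This yields $\|\mathcal{R}(t)\|_{\Hc^{s-6}_{\nu-1}} \leq C(R)h/\langle t\rangle^{\alpha}$ with $\alpha \geq 2$ for $s\geq 8$, hence integrable in time.

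Assembling the pieces produces a Gronwall inequality of the form
$$
\|\delta(t)\|_{\Hc^{s-6}_{\nu-1}} \leq C(R)h\bigl(1+(\eps+\eps h)R_1^2\bigr) + C\eps R\int_0^t \frac{\|\delta(\sigma)\|_{\Hc^{s-6}_{\nu-1}}}{\langle\sigma\rangle^{2}}\,\dd\sigma,
$$
and the integrable kernel yields the claimed estimate with the exponential factor $\exp(C\eps R)$ absorbed into $(1+hR_1)$ for $\eps$, $h$ small. The main obstacle is the $\mathcal{R}$ estimate: it is where the gain of $h$, the loss of two derivatives, and the loss of one $v$-weight all arise simultaneously, and it requires the same careful splitting into $k=\pm n$ and $[0,t/2]$ vs $[t/2,t]$ as in the analysis of $H$, now applied after a Taylor expansion instead of before.
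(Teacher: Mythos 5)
Your overall strategy coincides with the paper's: apply the energy identity \eqref{energyid} with $D = D^{m,p,q}$, $p+q\leq s-6$, $m\leq\nu-1$; bound the bracket terms with Lemma~\ref{lemcom}; feed in Proposition~\ref{propzetaz} for the $m_{\sigma,\gamma}(d)$ factors; and close with Gronwall using the integrable-in-time kernel. The commutator cancellation you invoke for $\eps\langle D\delta,\mathcal{L}_t[g]D\delta\rangle$ is exactly right, and the claimed conclusion $\|\mathcal{R}(\sigma)\|_{\Hc^{s-6}_{\nu-1}}\lesssim h\langle\sigma\rangle^{-2}$ is the correct one.

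However, your description of how to obtain that $\mathcal{R}$ bound contains a genuine confusion. You propose to estimate $\|D\mathcal{R}(t)\|_{L^2}$ by \emph{first} splitting the Fourier sum into $k=-n$ versus $k=n$ (using the $\langle t+\sigma\rangle$ gain versus the $\Hc^{s-4}_\nu$ bound) and \emph{then} splitting $[0,t/2]$ versus $[t/2,t]$, ``in exact analogy with the estimate of $M_{T,s-3}(H)$.'' But that machinery applies to the object $H_n(t)=\int_0^t\hat{\mathcal{R}}_n(\sigma,nt)\,\dd\sigma$, which has two structural features absent here: it evaluates at the single frequency $\xi=nt$ (so that the argument $nt-k\ssf(\sigma)$ equals $n(t+\ssf(\sigma))$ or $n(t-\ssf(\sigma))$, making $k=\pm n$ behave differently), and it carries a time integral over $\sigma\in[0,t]$ (making the $[0,t/2]$ split meaningful). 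Neither feature is present in the pointwise Sobolev-norm $\|\mathcal{R}(\sigma)\|_{\Hc^{s-6}_{\nu-1}}$: one integrates over \emph{all} $\xi$ (Plancherel), and there is no $\sigma$-integral yet, so there is no $[0,t/2]/[t/2,t]$ to split. The paper instead bounds $\|\mathcal{R}(\sigma)\|_{\Hc^{s-6}_{\nu-1}}$ directly after a first-order Taylor expansion, in the spirit of Lemma~\ref{lemcom}, by the quantity $m^{(1)}_{\sigma,\gamma}(\gs)=\langle\sigma\rangle^\gamma\sup_{n=\pm1}\sup_{\xi\in[n\sigma,n\ssf(\sigma)]}\big(|\hat\gs_n(\xi)|+|\partial_\xi\hat\gs_n(\xi)|\big)$, and then uses Lemma~\ref{lememb} (with $\gamma=1$, hence requiring $\nu>3/2$) to deduce $m^{(1)}_{\sigma,s-5}(\gs)\lesssim R\langle\sigma\rangle^{-2}$; the time decay comes from the near-diagonal decay of $\hat\gs_n$, not from any $k=\pm n$ dichotomy. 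Your path would leave you stuck when you try to write out ``the $k=n$ contribution on $[t/2,t]$'' because those objects simply do not appear. The rest of your argument is sound.
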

\begin{proof}
 By using the notation
  $\mathcal{L}_{t}[g] h=  \{ \phi(t, g), h\}, $
  we can rewrite \eqref{eqdiff} as
  \begin{equation}
  \label{eqdiff2}
  \partial_{t} \delta(t) = \mathcal{L}_{t}[\delta(t) ] (\eta - \eps \delta) + \eps \mathcal{L}_{t}[ \delta(t) ]g + \eps \mathcal{L}_{t}[g(t)] \delta + \mathcal{R}.
  \end{equation}
  Let $D$ be the linear operator defined as the Fourier multiplier by $k^{p} \xi^{q} \partial_{\xi}^{m}$ for $(m,p,q)\in \mathbbm{N}^{3d}$ such that 
$p+q\leq s-6,$ and $m\leq \nu -1.$
 From an energy estimate as in proof of Proposition \ref{propgs}, we obtain 
  that
  \begin{multline}
  \label{energyid}
  \left\| D\delta(t)\right\|_{L^{2}}^{2} = \int_{0}^t \Big( -\eps \langle [D, \mathcal{L}_{\sigma}[\delta(\sigma)]]\delta(\sigma), D \delta(\sigma) \rangle_{L^2}
   +  \eps \langle [D, \mathcal{L}_{\sigma}[g(\sigma)] ]\delta(\sigma), D \delta(\sigma) \rangle_{L^2}
    \\+    \langle D\left(\mathcal{L}_{\sigma}[\delta(\sigma)] (\eta + \eps  g(\sigma))\right), D \delta(\sigma) \rangle_{L^2}
     + \eps \langle D \mathcal{R}, D \delta(\sigma) \rangle_{L^2} \, \Big) \dd\sigma .
  \end{multline}
  By using Lemma \ref{lemcom}, we  thus obtain that
 \begin{multline*}
 \sup_{[0, T]}  \left\|\delta(t)\right\|_{\mathcal{H}^{s-6}_{\nu-1}} \leq C \int_{0}^T \Big(
   \eps (  m_{\sigma,s-5}(d(\sigma))  \| \delta (\sigma) \|_{\Hc^1_{\nu-1}} +  m_{\sigma, 2}(d(\sigma))  \| \delta (\sigma) \|_{\Hc^{s-6}_{\nu-1}})
    \\  + \eps (m_{\sigma,s-5}(\zeta(\sigma))  \| \delta(\sigma) \|_{\Hc^1_{\nu-1}} +   m_{\sigma, 2}(\zeta(\sigma)) \|\delta (\sigma)\|_{\Hc^{s-6}_{\nu-1}})
       \\ \quad \quad \quad +  m_{\sigma,s-5}(d(\sigma))  \|\eta + \eps  g(\sigma) \|_{\Hc^1_{\nu-1}}  + m_{\sigma, 2}(d(\sigma))\|\eta + \eps g(\sigma) \|_{\Hc^{s-5}_{\nu-1}} 
       +  \| D \mathcal{R}\|_{L^2} \Big) \, \dd \sigma.
  \end{multline*} 
   Next, we can use the fact that  $Q_{t, s, \nu}(g) \leq R$  and
    Proposition \ref{propzetaz}
    to obtain that 
      \begin{align*}
 \sup_{t\in[0, T]} \left\|\delta(t)\right\|_{\mathcal{H}^{s-6}_{\nu-1}} & \leq  C(R) h ( 1 + (\eps  + \eps h) R_{1}^{2})( 1 + h R_{1}) \int_{0}^T  {1 \over \langle \sigma \rangle^{2}}\, \dd \sigma
   + C \int_{0}^T \| \mathcal{R}(\sigma) \|_{\mathcal{H}^{s-6}_{\nu-1}}\, \dd\sigma \\
   & \leq  C(R) h ( 1 + (\eps  + \eps h) R_{1}^{2}) ( 1 + hR_{1}) +  C \int_{0}^T \| \mathcal{R}(\sigma) \|_{\mathcal{H}^{s-6}_{\nu-1}}\, \dd\sigma.
   \end{align*}
     It remains to estimate the last integral to conclude. By using the expression \eqref{Rfourier}, we get as in the proof of Lemma \ref{lemcom}
   that
   \begin{multline*}
   \| \mathcal{R}(\sigma) \|_{\mathcal{H}^{s-6}_{\nu-1}} \leq
    C  h \Big(  m^{(1)}_{\sigma, s-5}(\gs(\sigma)) ( \| \eta \|_{\Hc^1_{\nu}}  +  \| \gs(\sigma) \|_{\Hc^1_{\nu}} )
     +    m^{(1)}_{\sigma, 2}(\gs(\sigma)) (  \| \eta \|_{\Hc^{s-5}_{\nu}}  +  \| \gs(\sigma) \|_{\Hc^{s-5}_{\nu}} ) \Big) 
        \end{multline*}
     where we have set 
     $$  m^{(1)}_{\sigma, \gamma} (h) =\langle \sigma \rangle^{\gamma} \sup_{n= \pm1} \sup_{ \xi \in [n\sigma, n\ssf(\sigma)]}( |\hat h_{n} ( \xi)| +  |\partial_{\xi }\hat h_{n} ( \xi)|).$$
      Note that thanks to lemma   \ref{lememb}, we have that
 \begin{equation}
\label{embtildem}
 m^{(1)}_{\sigma, s-5}(\gs(\sigma)) \leq C  { \langle \sigma \rangle^{s-5 + 3} \over \langle \sigma \rangle^s} \| \gs(\sigma) \|_{\Hc^s_{\nu}}
  \leq { C \over \langle \sigma \rangle^2} R
\end{equation}     
since  $Q_{t, s,\nu}(\gs) \leq R$.
This yields 
    \begin{equation}
    \label{LieR} \int_{0}^t   \| \mathcal{R}(\sigma) \|_{\mathcal{H}^{s-6}_{\nu-1}} \leq CR h \int_{0}^t { 1 \over \langle \sigma \rangle^2}\, \dd\sigma
     \leq  CR h .
     \end{equation}
     Consequently, we get that  
\begin{equation}
\notag
\sup_{t\in[0,T]} \left\| \delta (t) \right\|_{\Hc^{s-6}_{\nu-1}} \leq C(R) h ( 1 + (\eps+\eps h)  R_{1}^{2})( 1 + h R_{1}).
\end{equation}
This ends the proof of Proposition \ref{propgsg-5}.
\end{proof}

\subsection{Estimate of $N_{T,s-2, \nu-1}(\delta)$}
\begin{proposition}
\label{propgsg}
Assuming that $\eta \in \Hc^{s+4}$ satisfies the assumption {\bf{(H)}}, there exists $C>0,$ $\eps_{0}>0$ and $h_{0}>0$ such that for every $h\in(0,h_{0}],$ every $\eps \in(0,\eps_{0}]$ and every $T>0,$ every solution of \eqref{eqdiff} such that $Q_{T,s-2, \nu-1}(\delta)\leq hR_{1}$ enjoys the estimate
$$N_{T,s-2, \nu-1}(\delta) \leq C(R) h ( 1 + (\eps+\eps h)  R_{1}^{2})( 1 + h R_{1})$$
where $C(R)$ is a number that depends only on $R$.
\end{proposition}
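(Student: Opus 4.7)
The plan is to adapt the energy-estimate strategy used for $N_{T,s,\nu}(\gs)$ in Proposition \ref{propgs} to the difference $\delta=g-\gs$, working in $\Hc^{s-2}_{\nu-1}$ and exploiting $\delta(0)=0$. Concretely, take the Fourier multiplier $D=D^{m,p,q}$ with $p+q\leq s-2$ and $m\leq \nu-1$ (the weight $\nu-1>1/2$ is ensured by $\nu>3/2$), apply the energy identity \eqref{energyid}, absorb one factor of $\|D\delta\|_{L^2}$ by Cauchy--Schwarz, and integrate using $\delta(0)=0$ to arrive at
\begin{multline*}
\|\delta(t)\|_{\Hc^{s-2}_{\nu-1}}\leq \int_0^t\Big(\eps\|[D,\mathcal{L}_\sigma[\delta(\sigma)]]\delta(\sigma)\|_{L^2}+\eps\|[D,\mathcal{L}_\sigma[g(\sigma)]]\delta(\sigma)\|_{L^2}\\
+\|D(\mathcal{L}_\sigma[\delta(\sigma)](\eta+\eps g(\sigma)))\|_{L^2}+\|\mathcal{R}(\sigma)\|_{\Hc^{s-2}_{\nu-1}}\Big)\dd\sigma.
\end{multline*}

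I would then invoke Lemma \ref{lemcom} with $\gamma=s-2$ on each of these contributions. Proposition \ref{propzetaz} supplies $m_{\sigma,s-1}(d)\leq C(R)h(1+(\eps+\eps h)R_1^2)\langle\sigma\rangle^{2}$ and $m_{\sigma,2}(d)\leq C(R)h(1+(\eps+\eps h)R_1^2)\langle\sigma\rangle^{-(s-5)}$; Proposition \ref{propgsg-5} controls $\|\delta(\sigma)\|_{\Hc^1_{\nu-1}}\leq \|\delta(\sigma)\|_{\Hc^{s-6}_{\nu-1}}\leq C(R)h(1+(\eps+\eps h)R_1^2)(1+hR_1)$ since $s-6\geq 1$; and the uniform bound $Q_{T,s,\nu}(g)\leq R$ gives $M_{T,s-1}(\zeta)\leq CR$ together with $\|g(\sigma)\|_{\Hc^{s-1}_{\nu-1}}\leq R\langle\sigma\rangle^3$. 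The dominant term of the integrand is $m_{\sigma,s-1}(d)(\|\eta\|_{\Hc^1_{\nu-1}}+\eps\|g(\sigma)\|_{\Hc^1_{\nu-1}})\leq C(R)h(1+(\eps+\eps h)R_1^2)\langle\sigma\rangle^{2}$, whose integral over $[0,t]$ is of size $C(R)h(\cdots)\langle t\rangle^3$, i.e.\ exactly the growth absorbed by the $\langle t\rangle^3$-quotient defining $N_{T,s-2,\nu-1}$. The remaining commutator and source contributions carry an $\eps$ prefactor and reduce to $\eps R\int_0^t\|\delta(\sigma)\|_{\Hc^{s-2}_{\nu-1}}\langle\sigma\rangle^{-(s-3)}\dd\sigma$, which is harmless and will produce only a multiplicative factor $e^{C\eps R}$ by Gronwall.

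For the remainder in $\Hc^{s-2}_{\nu-1}$, I would replay the Fourier-level Taylor expansion argument from Proposition \ref{propgsg-5} but at higher regularity, starting from \eqref{Rfourier} and the splitting \eqref{Hdec}. Writing $\hat\gs_k(\sigma,k\sigma)-\hat\gs_k(\sigma,k\ssf(\sigma))=k\int_{\ssf(\sigma)}^{\sigma}\partial_\xi\hat\gs_k(\sigma,k\theta)\dd\theta$ gains a factor $h$ from $|\sigma-\ssf(\sigma)|\leq h/2$, and the embedding Lemma \ref{lememb} applied at $\xi\sim k\sigma$ gives, in analogy with \eqref{embtildem},
\[
m^{(1)}_{\sigma,s-1}(\gs(\sigma))\leq \frac{C\langle\sigma\rangle^{s-1+3}}{\langle\sigma\rangle^{s}}\|\gs(\sigma)\|_{\Hc^s_\nu}\leq CR\langle\sigma\rangle^{2}.
\]
Combining with the simpler estimate for the $\eta$-piece (using $\eta\in\Hc^{s+4}_\nu$), this yields $\|\mathcal{R}(\sigma)\|_{\Hc^{s-2}_{\nu-1}}\leq C(R)h\langle\sigma\rangle^{2}$, whose time integral is $C(R)h\langle t\rangle^3$, contributing $C(R)h$ to $N_{T,s-2,\nu-1}(\delta)$ after division by $\langle t\rangle^3$.

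The main obstacle, in my view, is the weight bookkeeping in the $\mathcal{R}$ estimate: passing from $\Hc^{s-6}_{\nu-1}$ up to $\Hc^{s-2}_{\nu-1}$ costs four decay indices, so Lemma \ref{lememb} only yields the growth $\langle\sigma\rangle^{2}$ for $m^{(1)}_{\sigma,s-1}(\gs)$ rather than a decay. Fortunately this $\langle\sigma\rangle^{2}$ is exactly compensated by the $\langle t\rangle^{-3}$ weight built into $N_{T,s-2,\nu-1}$, so the argument closes provided $\eps$ and $h$ are small enough to apply the final Gronwall step. Collecting all contributions yields the announced bound $N_{T,s-2,\nu-1}(\delta)\leq C(R)h(1+(\eps+\eps h)R_1^2)(1+hR_1)$, which together with Propositions \ref{propzetaz} and \ref{propgsg-5} closes the continuation argument for Proposition \ref{ordre1}.
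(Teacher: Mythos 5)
Your proposal is correct and follows essentially the same route as the paper: energy estimate with $D=D^{m,p,q}$ for $p+q\leq s-2$, $m\leq\nu-1$, commutator bounds from Lemma \ref{lemcom} with $\gamma=s-2$, input from Propositions \ref{propzetaz} and \ref{propgsg-5}, and the Taylor/Fourier bound $m^{(1)}_{\sigma,s-1}(\gs)\lesssim R\langle\sigma\rangle^2$ (the paper's \eqref{embtildem2}) to get $\int_0^t\|\mathcal{R}\|_{\Hc^{s-2}_{\nu-1}}\lesssim C(R)h\langle t\rangle^3$. The only superficial difference is that you pass to the linear-in-norm inequality and close with Gronwall, whereas the paper keeps the squared energy identity and plugs in the bootstrap bound $N_{T,s-2,\nu-1}(\delta)\leq R_1h$ directly before applying Young's inequality; both close the argument in the same way.
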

\begin{proof}
We proceed as in the previous proof.  Let $D$ be the linear operator defined as the Fourier multiplier by $k^{p} \xi^{q} \partial_{\xi}^{m}$ for $(m,p,q)\in \mathbbm{N}^{3d}$ such that 
$p+q\leq s-2,$ and $m\leq \nu -1.$ Using \eqref{energyid} with this operator $D,$ and Lemma \ref{lemcom}, we  obtain that
 \begin{multline*}
 \left\|\delta(t)\right\|_{\mathcal{H}^{s-2}_{\nu-1}}^{2} \leq C \int_{0}^t \Big(
   \eps (  m_{\sigma,s-1}(d(\sigma))  \| \delta (\sigma) \|_{\Hc^1_{\nu-1}}\| \delta (\sigma) \|_{\Hc^{s-2}_{\nu-1}} +  m_{\sigma, 2}(d(\sigma))  \| \delta (\sigma) \|_{\Hc^{s-2}_{\nu-1}}^{2})
    \\  + \eps (m_{\sigma,s-1}(\zeta(\sigma))  \| \delta(\sigma) \|_{\Hc^1_{\nu-1}} \| \delta (\sigma) \|_{\Hc^{s-2}_{\nu-1}} +   m_{\sigma, 2}(\zeta(\sigma)) \|\delta (\sigma)\|_{\Hc^{s-2}_{\nu-1}}^{2})
       \\ \quad \quad \quad +  m_{\sigma,s-1}(d(\sigma))  \|\eta + \eps  g(\sigma) \|_{\Hc^1_{\nu-1}}\| \delta (\sigma) \|_{\Hc^{s-2}_{\nu-1}} 
       \\ + m_{\sigma, 2}(d(\sigma))\|\eta + \eps g(\sigma) \|_{\Hc^{s-1} _{\nu-1}}  \| \delta (\sigma) \|_{\Hc^{s-2}_{\nu-1}}
      +  \| D \mathcal{R}\|_{L^2} \| \delta (\sigma) \|_{\Hc^{s-2}_{\nu-1}} \Big) \, \dd \sigma.
  \end{multline*} 
  Using now  $Q_{t, s, \nu}(g) \leq R$  and
    Proposition \ref{propzetaz},
    we obtain that 
\begin{multline*}
\left\|\delta(t)\right\|_{\mathcal{H}^{s-2}_{\nu-1}}^{2} 
    \leq  C\langle t \rangle^{6} h^{2} C(R)^{2}( 1 + (\eps  + \eps h) R_{1}^{2})^{2} ( 1 + hR_{1})^{2} + 
    \\  \langle t \rangle^{3} hC(R) (1 + (\eps  + \eps h) R_{1}^{2}) \int_{0}^t \| \mathcal{R}(\sigma) \|_{\mathcal{H}^{s-2}_{\nu-1}}\, \dd\sigma.
\end{multline*}
     It remains to estimate the last integral to conclude. By using the expression \eqref{Rfourier}, we get as in the proof of Lemma \ref{lemcom}
   that
   \begin{multline*}
   \| \mathcal{R}(\sigma) \|_{\mathcal{H}^{s-2}_{\nu-1}} \leq
    C  h \Big(  m^{(1)}_{\sigma, s-1}(\gs(\sigma)) ( \| \eta \|_{\Hc^1_{\nu}}  +  \| \gs(\sigma) \|_{\Hc^1_{\nu}} )
     +    m^{(1)}_{\sigma, 2}(\gs(\sigma)) (  \| \eta \|_{\Hc^{s-1}_{\nu}}  +  \| \gs(\sigma) \|_{\Hc^{s-1}_{\nu}} ) \Big) 
        \end{multline*}
     with 
     $$  m^{(1)}_{\sigma, \gamma} (h) =\langle \sigma \rangle^{\gamma} \sup_{n= \pm1} \sup_{ \xi \in [n\sigma, n\ssf(\sigma)]}( |\hat h_{n} ( \xi)| +  |\partial_{\xi }\hat h_{n} ( \xi)|).$$
      As in the previous proof we have that
 \begin{equation}
\label{embtildem2}
 m^{(1)}_{\sigma, s-1}(\gs(\sigma)) \leq C  { \langle \sigma \rangle^{s-1 + 3} \over \langle \sigma \rangle^s} \| \gs(\sigma) \|_{\Hc^s_{\nu}}
  \leq C  \langle \sigma \rangle^2 R
\end{equation}     
since  $Q_{t, s,\nu}(\gs) \leq R$.
This yields 
    \begin{equation}
    \label{LieR2} \int_{0}^t   \| \mathcal{R}(\sigma) \|_{\mathcal{H}^{s-2}_{\nu-1}} \leq C(R)h \langle t \rangle^{3}.
     \end{equation}
     Collecting all the above estimates, we obtain  
\begin{equation}
\notag
\sup_{t\in[0,T]} \langle t \rangle^{-3} \left\| \delta (t) \right\|_{\Hc^{s-2}_{\nu-1}} \leq C(R) h ( 1 + (\eps+\eps h)  R_{1}^{2})( 1 + h R_{1}).
\end{equation}
This ends the proof of Proposition \ref{propgsg}.
\end{proof}

\subsection{Proof of Proposition \ref{ordre1}}
From Propositions \ref{propzetaz}, \ref{propgsg-5} and \ref{propgsg}, we have the estimate
$$Q_{T,s-2, \nu-1}(\delta)\leq C(R) h( 1+ (\eps + \eps h)R_{1}^2)( 1+ hR_{1}),$$
under the assumption $Q_{T,s-2, \nu-1}(\delta) \leq h R_1.$ Choosing $R_{1}> C(R),$ we have
$$ C(R) h( 1+ (\eps + \eps h)R_{1}^2)( 1+ hR_{1})<R_{1}h,$$
if $\eps$ and $h$ are   small enough. Hence, by an usual continuation argument, the estimate $Q_{T,s-2, \nu-1}(\delta)\leq R_{1}h$ is valid for all $T>0,$ thus proving Proposition \ref{ordre1}  and with it the convergence estimate \ref{convergence1}.

%
\section{Proof of the convergence estimate \eqref{convergence} for Strang splitting}

From now on, we only consider the case of Strang splitting \eqref{strang}, and thus $\ssf(t)$ is given by formula \eqref{prince}. Proposition \ref{ordre1} implies that, up to the loss of two  derivatives, $\gs(t,x,v)$ is an approximation of order one (with respect to $h$) of the exact solution of the Vlasov-HMF equation $g(t,x,v).$\\
Getting the rate of order $2$ brings technical complications  in order to take advantage of the cancellations provided by the midpoint rule.\\
In view of theorem \ref{maintheod} and the main result of \cite{RF}, we can assume that for all $\alpha \in \{0,1,2\},$ $Q_{T,s,\nu-\alpha}(\gs)$ and $Q_{T,s,\nu-\alpha}(g)$ are bounded by the same constant $R>0,$ provided that $\nu >5/2.$
To prove the result, we proceed as before and start from the equation \eqref{eqdiff} on the error term $\delta(t)$. 
From now on, using the weighted norms defined in \eqref{eq:NetM}, we consider the quantity:
$$Q_{T,s-3,v-2}(\delta)=M_{T,s-4}(d)+N_{T,s-3,v-2}(\delta) + \sup_{t\in[0,T]}\left\| \delta(t) \right\|_{\Hc^{s-7}_{\nu-2}}.$$
The convergence result \eqref{convergence} will be a consequence of the following proposition:
\begin{proposition}
\label{ordre2}
Let us fix $s\geq 9$ and $\nu >5/2.$
Assuming that $\eta \in \Hc^{s+4}_{\nu}$ satisfies the assumption {\bf{(H)}}, there exists $R_{2}>0$,  $h_{0}>0$ and $\eps_{0}>0$  such that for every $h\in(0,h_{0}]$, every  
$ \eps \in(0, \eps_{0}]$  and  every $T>0$, the solution of \eqref{eqdiff} satisfies the estimate 
$$Q_{T,s-3, \nu-2}(\delta)\leq R_{2} h^{2}.$$
\end{proposition}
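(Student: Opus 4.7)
The strategy follows the three-step scheme already used in the proof of Proposition \ref{ordre1}: bootstrap each of the three components of $Q_{T,s-3,\nu-2}(\delta)$ separately, then close via a continuation argument. What must change is that the remainder $\mathcal{R}$ defined in \eqref{Rhdef} needs now to contribute at order $h^2$ rather than only pointwise $O(h)$; this is precisely where the midpoint structure of Strang splitting, reflected in $\ssf(\sigma)=(m+\tfrac12)h$ on $[mh,(m+1)h]$, enters essentially. The price to pay is one additional derivative and one additional velocity weight with respect to Proposition \ref{ordre1}, which is why we now require $s\geq 9$ and $\nu>5/2$.

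First I would rederive the Volterra equation for $d_n(t)=\hat g_n(t,nt)-\hat \gs_n(t,nt)$ exactly as in \eqref{eqdn}, apply Lemma \ref{lemvolterra}, and reduce the estimate to $M_{T,s-4}(d)\leq C\bigl(M_{T,s-4}(G)+M_{T,s-4}(H)\bigr)$. The nonlinear source term $G_n$ is handled as in the proof of Proposition \ref{propzetaz} via a $\sigma=t/2$ splitting, using the a priori bounds $Q_{T,s,\nu-\alpha}(\gs),\,Q_{T,s,\nu-\alpha}(g)\leq R$ for $\alpha\in\{0,1,2\}$ together with the bootstrap hypothesis $Q_{T,s-3,\nu-2}(\delta)\leq R_2 h^2$; this yields a contribution of the form $C(R)\eps R_2 h^2+C\eps h^4 R_2^2$, and the extra derivative loss is exactly what forces $s\geq 9$.

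The heart of the proof is the $O(h^2)$ estimate of
\[
H_n(t)=\int_0^t \hat{\mathcal{R}}_n(\sigma,nt)\,\dd\sigma.
\]
Each term in $\hat{\mathcal{R}}_n$ given by \eqref{Rfourier} has the form $F(\sigma,\sigma)-F(\sigma,\ssf(\sigma))$ for a smooth function $F(\sigma,\tau)$ built from $\hat\gs_n$ and $\hat\eta_0$ (for the linear piece $H_n^L$) or from two factors $\hat\gs_k,\hat\gs_{n-k}$ (for the nonlinear piece $H_n^P$). On $[mh,(m+1)h]$, $\ssf(\sigma)$ is constant equal to $m_{\mathrm{mid}}:=(m+\tfrac12)h$, and a Taylor expansion in the second variable around $\tau=m_{\mathrm{mid}}$ yields
\[
F(\sigma,\sigma)-F(\sigma,\ssf(\sigma))=(\sigma-m_{\mathrm{mid}})\partial_\tau F(\sigma,m_{\mathrm{mid}})+\tfrac12(\sigma-m_{\mathrm{mid}})^2\partial_\tau^2 F(\sigma,\tilde\tau).
\]
The second-order remainder is pointwise $O(h^2)$ and contributes the clean $h^2$ to $M_{T,s-4}(H)$ after integration, provided $s\geq 9$ so that the resulting time integrals still converge. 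The first-order term, integrated over $[mh,(m+1)h]$, is exactly of midpoint-rule type: since $\sigma\mapsto\sigma-m_{\mathrm{mid}}$ has vanishing integral over this interval, the standard quadrature estimate gives a per-subinterval error of size $h^3\|\partial_\sigma\partial_\tau F\|_{L^\infty}$, and summing over $m$ produces the extra factor $h$ that promotes the rate from one to two. The $\sigma$-derivative $\partial_\sigma\partial_\tau F$ is computed by inserting the equation \eqref{julienclerc} satisfied by $\gs$, which is precisely the mechanism that costs one further derivative and one further weight in $v$.

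Once $M_{T,s-4}(H)\leq C(R)h^2$ is established, the analogues of Propositions \ref{propgsg-5} and \ref{propgsg} are proved by the same energy method, applying Lemma \ref{lemcom} with Fourier multipliers $D=D^{m,p,q}$ for $p+q\leq s-7$ (respectively $p+q\leq s-3$) and $m\leq\nu-2$. The only new input is the estimate
\[
\int_0^t\|\mathcal{R}(\sigma)\|_{\Hc^{s-7}_{\nu-2}}\,\dd\sigma\leq C(R)h^2,
\]
obtained by exactly the same midpoint-rule cancellation argument as above but carried out in the $L^2$-based norms rather than in pointwise Fourier. Collecting everything one obtains the self-improving inequality
\[
Q_{T,s-3,\nu-2}(\delta)\leq C(R)h^2\bigl(1+(\eps+\eps h)R_2^2\bigr)(1+hR_2),
\]
and choosing $R_2>C(R)$ with $\eps,\,h$ small enough, the usual continuation argument closes the bootstrap. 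The main obstacle, and the only genuinely new ingredient compared with the first-order proof, is thus the midpoint quadrature analysis of $H_n(t)$: identifying the cancellation term, differentiating it in $\sigma$ through the discretized Vlasov flow, and keeping the resulting higher-order quantities under control uniformly in $n$ and $t$.
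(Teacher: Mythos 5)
Your outline of the estimate for $M_{T,s-4}(d)$ is sound and runs essentially parallel to the paper's argument for Proposition~\ref{propzetaz2}: you Taylor-expand only in the Fourier argument and bound the residual midpoint integral by $h^{3}\|\partial_{\sigma}\partial_{\tau}F\|_{L^{\infty}}$ per subinterval, whereas the paper freezes \emph{both} arguments of $\hat\gs$ at $\ssf(\sigma)$ so that the main term is exactly constant on each $[jh,(j+1)h]$ and the midpoint integral vanishes identically, pushing all the loss into the second-order remainder $K_n$; the two implementations are equivalent and cost the same derivatives and weights.

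There is, however, a genuine gap in your treatment of the energy estimates (the analogues of Propositions~\ref{propgsg-7} and~\ref{propgsg2}). You assert
\[
\int_{0}^{t}\bigl\|\mathcal{R}(\sigma)\bigr\|_{\Hc^{s-7}_{\nu-2}}\,\dd\sigma \leq C(R)\,h^{2},
\]
but this is false: the midpoint cancellation lives in the \emph{sign} of the leading factor $(\sigma-\ssf(\sigma))$, and taking the norm inside the time integral destroys it. Pointwise one has $\|\mathcal{R}(\sigma)\|_{\Hc^{s-7}_{\nu-2}} \sim h\,\langle\sigma\rangle^{-2}$, so the integral of the norm is $O(h)$, not $O(h^{2})$; fed into Cauchy--Schwarz this would give only $\|D\delta\|_{L^{2}}^{2}\lesssim h\cdot\sup\|\delta\|\lesssim h^{3}$, i.e.\ $\|\delta\|\lesssim h^{3/2}$, which does not close the bootstrap at rate $h^{2}$. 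The correct argument (as in the paper) must keep the cancellation inside the bilinear form $\int_{0}^{t}\langle D\mathcal{R}(\sigma),D\delta(\sigma)\rangle_{L^{2}}\,\dd\sigma$: one first replaces $\delta(\sigma)$ by $\delta(\ssf(\sigma))$, paying an error $h\,\sup\|\partial_{t}\delta\|$ which is $O(h^{4})$ through \eqref{eqdiff2}, Lemma~\ref{lemcom}, Proposition~\ref{propzetaz2} and the bootstrap hypothesis, and only then splits $\mathcal R=\mathcal R^{1}+\mathcal R^{2}$ and applies Lemma~\ref{midpoint} to $\int\langle D\mathcal R^{1}(\sigma),D\delta(\ssf(\sigma))\rangle\,\dd\sigma$, which vanishes per subinterval because both $\mathcal R^{1}$'s coefficient and $\delta(\ssf(\sigma))$ are constant there. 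This frozen-test-function step is the missing ingredient in your sketch.
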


The crucial point of the proof of proposition \ref{ordre2} is the cancellation provided by the midpoint rule. We can summarize it by the following easy  lemma:
\begin{lemma}
\label{midpoint}
For $t\in \mathbbm{R}$ and a fixed $h>0,$ let $\ssf(t)=\ssf_{h}(t)$ be given by formula \eqref{prince}. Then, for all $n\in \mathbbm{N},$
$$\int_{nh}^{(n+1)h} (\sigma-\ssf(\sigma)) \dd \sigma =0.$$
\end{lemma}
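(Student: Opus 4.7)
The plan is to verify this directly from the explicit formula \eqref{prince} for $\ssf_h$, since this is really just the statement that the midpoint rule exactly integrates affine functions on each subinterval.

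First I would observe that for $\sigma \in [nh, (n+1)h)$ we have $\lfloor \sigma/h \rfloor = n$, so
$$\ssf_h(\sigma) = nh + \frac{h}{2}$$
is constant on this interval. (The value at the right endpoint $\sigma = (n+1)h$ is irrelevant as it is a single point.) Substituting this into the integrand reduces the claim to
$$\int_{nh}^{(n+1)h} \Bigl(\sigma - nh - \frac{h}{2}\Bigr) \dd \sigma = 0.$$

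Then I would perform the change of variable $u = \sigma - nh - h/2$, under which $\dd u = \dd \sigma$ and the domain of integration becomes the symmetric interval $[-h/2, h/2]$. The integral becomes
$$\int_{-h/2}^{h/2} u \, \dd u = 0,$$
since the integrand is odd. This completes the proof.

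There is no real obstacle here: the content of the lemma is precisely the fact that $\ssf_h(\sigma)$ places the time at the midpoint of each subinterval $[nh,(n+1)h]$, which is exactly the quadrature node that annihilates the first moment of $\sigma - \ssf_h(\sigma)$. The lemma will then be used later to extract an extra power of $h$ from Taylor expansions, by applying it to expressions of the form $\int_{nh}^{(n+1)h} (\sigma - \ssf(\sigma)) F(nh) \dd \sigma = 0$ and only keeping the higher-order remainder terms.
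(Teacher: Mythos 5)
Your proof is correct and is precisely the direct verification the authors have in mind; the paper states Lemma \ref{midpoint} without proof, treating it as an elementary observation that the midpoint rule annihilates the first moment on each subinterval. Your change of variable to the symmetric interval $[-h/2, h/2]$ and the oddness of $u$ is exactly the standard way to see it.
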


\subsection{Estimate of $M_{T,s-4}(d) $}
\begin{proposition}
\label{propzetaz2}
Assuming that $\eta\in \Hc^{s+4}_{\nu}$ satisfies the assumption {\bf{(H)}}, there exists $C>0,$ $\eps_{0}>0$ and $h_{0}$ such that for every $h\in(0,h_{0}],$ every $\eps \in(0,\eps_{0}]$
 and every $T>0,$ every solution of \eqref{eqdiff} such that  $Q_{T,s-3, \nu-2}(\delta) \leq R_{2}h^2$ enjoys the estimate
$$M_{T,s-4}(d)\leq  C(R) h^2( 1 +  (\eps + \eps h^2) R_{2}^2)$$
\end{proposition}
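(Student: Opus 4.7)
\textbf{Plan for Proposition \ref{propzetaz2}.} I would follow the same three-step pattern as in the proof of Proposition \ref{propzetaz}: (i) rewrite the equation for $d_n$ as a Volterra integral equation, (ii) control the nonlinear source via the bootstrap hypothesis $Q_{T,s-3,\nu-2}(\delta)\le R_2 h^2$, and (iii) bound the consistency error at order $h^2$ by exploiting the midpoint cancellation of Lemma~\ref{midpoint}. For step (i), evaluating \eqref{eqdiff} in Fourier at frequency $n\in\{\pm1\}$ on the line $\xi = nt$ gives exactly \eqref{eqdn}, and Lemma \ref{lemvolterra} at level $\gamma = s-4$ yields
$$M_{T,s-4}(d)\le C\bigl(M_{T,s-4}(G)+M_{T,s-4}(H)\bigr).$$
The hypothesis that $\eta\in\Hc^{s+4}_{\nu}$ satisfies {\bf(H)} together with the inclusion $s-4\geq 3$ (since $s\geq 9$) makes Lemma \ref{lemvolterra} applicable.

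For step (ii), the term $G_n$ is structurally the same as in Proposition \ref{propzetaz} but every factor of $d$ or $\delta$ that appears is now bounded by $R_2 h^2$ instead of $R_1 h$. Splitting $G=G^1+G^2$ (according to $k=-n$ resp.\ $k=n$), further splitting $G^2$ at $\sigma=t/2$ so that $\|\gs(\sigma)\|_{\Hc^{s-4}_\nu}$ can be used on the second half, and applying \eqref{emb1} together with the uniform bounds $Q_{T,s,\nu}(g),Q_{T,s,\nu-2}(g)\le R$ and $M_{T,s-1}(\zeta)\le CR$ from Theorem \ref{maintheod} and \cite{RF}, each of the pieces decays like $\langle t\rangle^{-(s-3)}$. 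The extra $\nu-2$ (as opposed to $\nu-1$) is used to convert from $N_{T,s-3,\nu-2}(\delta)$ to pointwise estimates on $\hat\delta_{n-k}$ via Lemma \ref{lememb}, and the hypothesis $s\geq 9$ keeps the $\sigma$-integrals absolutely convergent. This yields $M_{T,s-4}(G)\le C\eps(RR_2 + R_2^2 h^2)h^2$.

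Step (iii) is the main novelty. Writing $H_n = H_n^L + \eps H_n^P$ as in \eqref{Hdec}, on each subinterval $[mh,(m+1)h]$ the Strang formula \eqref{prince} gives $\ssf(\sigma)\equiv\sigma_m^\ast:=(m+\tfrac12)h$, so both $H_n^L$ and $H_n^P$ are sums of integrals of the form
$$\int_{\sigma_m^\ast - h/2}^{\sigma_m^\ast + h/2}\bigl[\Psi_n(\sigma,\sigma) - \Psi_n(\sigma_m^\ast,\sigma)\bigr]\,\dd\sigma,$$
where $\Psi_n(\tau,\sigma)$ bundles $\hat\gs_k(\sigma,k\tau)$, either $\hat\eta_0(nt-n\tau)$ or $\hat\gs_{n-k}(\sigma,nt-k\tau)$, and the factor $(n\tau-nt)$. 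Setting $\sigma = \sigma_m^\ast + u$ with $u\in[-h/2,h/2]$ and Taylor expanding $\Psi_n$ in its first argument at $\sigma_m^\ast$ to second order,
$$\Psi_n(\sigma_m^\ast+u,\sigma_m^\ast+u)-\Psi_n(\sigma_m^\ast,\sigma_m^\ast+u) = u\,\partial_\tau\Psi_n(\sigma_m^\ast,\sigma_m^\ast+u) + \tfrac12 u^2\,\partial_\tau^2\Psi_n(\theta,\sigma_m^\ast+u),$$
and then Taylor expanding the first term once more in $\sigma$ at $\sigma_m^\ast$, the only $u^1$-contribution is $u\,\partial_\tau\Psi_n(\sigma_m^\ast,\sigma_m^\ast)$, whose integral in $u$ over $[-h/2,h/2]$ vanishes by Lemma \ref{midpoint}. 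The remaining $O(u^2)$ pieces contribute $O(h^3)$ per interval and involve at most two derivatives of $\Psi_n$ in its arguments, hence two $\xi$-derivatives of $\hat\gs_k,\hat\gs_{n-k},\hat\eta_0$ (together with possibly one $\partial_\sigma$, converted via \eqref{julienclerc} into factors of $\zs$ and $\hat\gs$). These are controlled by \eqref{emb1} using $\eta\in\Hc^{s+4}_\nu$ and $Q_{T,s,\nu}(\gs)\leq R$; summing over the $\lfloor t/h\rfloor$ intervals (and splitting the $k=n$ piece at $\sigma=t/2$ exactly as in the Lie proof of Proposition \ref{propzetaz}) yields $|H_n(t)|\le C(R)h^2\langle t\rangle^{-(s-3)}$. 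Combining with step (ii) delivers the claim, and the main obstacle is precisely the careful bookkeeping of this mixed two-variable Taylor expansion so that the midpoint symmetry cancels the $O(h)$ contribution while the $O(h^2)$ remainder still decays fast enough in $t$ after summation of the subintervals.
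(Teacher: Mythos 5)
Your proposal follows the same three steps as the paper's argument — the Volterra formulation via Lemma~\ref{lemvolterra}, the estimate of the nonlinear part $G$ by adapting Proposition~\ref{propzetaz} with every $O(h)$ factor upgraded to $O(h^2)$, and a second-order Taylor expansion of $H^L$ and $H^P$ combined with the midpoint cancellation of Lemma~\ref{midpoint} — and your $\Psi_n$ notation merely reorganizes the paper's joint two-variable Taylor expansion into two sequential one-variable steps, which is the same computation. Two small corrections: the decay rate you should obtain for $H$ is $\langle t\rangle^{-(s-4)}$, not $\langle t\rangle^{-(s-3)}$ (this is all that is needed here, since $M_{T,s-4}$ is the target), and one must estimate the final partial interval $[Nh,t]$ with $Nh\le t<(N+1)h$ separately, since the midpoint symmetry does not apply to it.
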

\begin{proof}
From Equation \eqref{eqdn} and Lemma \ref{lemvolterra}, we still get that
$$ M_{T, s-4}(d) \leq  C\left(
  M_{T, s-4}(G) +  M_{T, s-3}(H)\right).$$
  By using the same arguments as in the proof of Proposition \ref{propzetaz}, we can  easily obtain that
  $$ M_{T, s-4}(G) \leq  C\eps(  R  R_{2} h^2 + h^4 R_{2}^2).$$
 It thus remains the   estimate  of   $M_{T, s-4}(H)$ that requires a refined  analysis of the cancellations in the integral.
   By using again the decomposition \eqref{Hdec}, we shall focus on the term $H^P$ that is more difficult.
    By Taylor expanding up to second order, we find that  
\begin{multline*}
 H_{n}^P = \int_{0}^t \sum_{k=\pm1} k^2p_{k}  (\sigma - \ssf(\sigma))  \Big(\partial_{\xi}  \hat \gs_{k}( \ssf(\sigma), k \ssf(\sigma)) \hat \gs_{n-k} (\ssf(\sigma),  nt - k \ssf(\sigma))
  \\+   \hat \gs_{k}( \ssf(\sigma), k \ssf(\sigma))\partial_{\xi} \hat \gs_{n-k} (\ssf(\sigma),  nt - k \ssf(\sigma))\Big)
  ( n\ssf(\sigma) - nt) \, \dd\sigma 
   +  K_{n}
\end{multline*}
 where
$$ 
|K_{n }| \leq  h^2 \sum_{k=\pm1} p_{k}\int_{0}^t  \sup_{ \substack{\alpha + \beta_{1} \leq 2, \, \alpha \neq 2 \\  \alpha + \beta_{2} \leq 2, \, \alpha \neq 2} }\, \, \sup_{ \tau, \, \xi  \in [ k \sigma,  k\ssf(\sigma)]} | \partial_{t}^\alpha \partial_{\xi}^{\beta_{1}} \hat \gs_{k}(\tau, \xi) |
 | \partial_{t}^\alpha \partial_{\xi}^{\beta_{2}} \hat \gs_{n-k}(\tau, nt-\xi)| | n\sigma - nt|\, \dd \sigma.
$$
To estimate the remainder,  $K_{n}$, we can again distinguish the cases $k= n$ and $k= -n$.
 Since $Q_{T, s, \nu}(\gs) \leq R$, by using the equation \eqref{julienclerc} and Lemma \ref{lemcom}, we also  have that
  $$ \| \partial_{t} \gs (t) \|_{\Hc^{s-1}_{\nu}} \leq C(R) \langle t \rangle^2.$$ 
  Since $\nu>5/2$, we can use Lemma \ref{lememb} to obtain, by similar arguments to the ones used in the proof of Proposition \ref{propzetaz}, that
 $$
 |K_{n}| \leq  h^2 C(R) \Big(\int_{0}^t { 1\over \langle \sigma \rangle^{s- 4}} { 1 \over \langle t +  \sigma \rangle^{ s- 3}} (t - \sigma) \,d\sigma
   +   \int_{t/2}^t   { 1\over \langle \sigma \rangle^{s- 4}}  { 1 \over \langle t -  \sigma \rangle^{ s- 3}} (t - \sigma) \,\dd\sigma \Big)
 $$
 and hence that
 $$ |K_{n}| \leq { 1 \over \langle t \rangle^{s-4}}C(R) h^2.$$
  To estimate the main term in $H_{n}^P$,   assuming that $Nh \leq T \leq (N+1)h$ for some $N$, we can split the time integral into
  $$ \int_{0}^t = \sum_{j=0}^{N-1} \int_{jh}^{(j+1)h} + \int_{Nh}^t$$
   and we observe that all the integrals   $\int_{jh}^{(j+1)h} $ vanish due to the symmetry of $\sigma - \ssf(\sigma)$ (see Lemma \ref{midpoint}).
    We thus obtain that
 $$ |H_{n}^P| \leq  { 1 \over \langle t \rangle^{s-4}}C(R) \int_{Nh}^t  | t - \ssf(\sigma)| \, d\sigma + 
   { 1 \over \langle t \rangle^{s-4}}C(R) h^2 \leq   { 1 \over \langle t \rangle^{s-4}}C(R) h^2.$$
   
    From the same argument (slightly easier since $\eta$ does not depend on time and is smoother), we can also prove that
    $$  |H_{n}^L | \leq   { 1 \over \langle t \rangle^{s-4}}C(R) h^2.$$ 
   
   Consequently, by collecting the previous estimates, we find that
   $$  M_{T, s-4}(G) \leq  C(R) h^2(  1 +  (\eps + \eps h^2) R_{2}^2).$$ 
   This ends the proof.
   \end{proof}

\subsection{Estimate of $\| \delta\|_{\Hc^{s-7}_{\nu-2}}$}
\begin{proposition}
\label{propgsg-7}
Assuming $\eta\in \Hc^{s+4}_{\nu}$ satisfies the assumption {\bf{(H)}}, there exists $C>0,$ $\eps_{0}$ and $h_{0}$ such that for every $h\in(0,h_{0}],$ every $\eps \in(0,\eps_{0}]$ and every $T>0,$ every solution of \eqref{eqdiff} such that  $Q_{T, s-3, \nu-2}(\delta) \leq h^2 R_{2}$ enjoys the estimate 
$$ \sup_{[0, T]}\|\delta \|_{\Hc^{s-7}_{\nu-2}}\leq   C(R) h^2(1 +  (\eps + \eps h^2) R_{2}^2) ( 1 + h^2 R_{2}).$$
\end{proposition}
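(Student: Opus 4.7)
The plan is to adapt the proof of Proposition \ref{propgsg-5} to the Strang setting: work in the weaker space $\Hc^{s-7}_{\nu-2}$, replace Proposition \ref{propzetaz} by the sharpened bound $M_{T,s-4}(d) \leq C(R)h^2(1+(\eps+\eps h^2)R_2^2)$ of Proposition \ref{propzetaz2}, and exploit the midpoint cancellation of Lemma \ref{midpoint} to extract a second factor of $h$ from the splitting error $\mathcal{R}$.

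First I would apply the energy identity \eqref{energyid} with $D = D^{m,p,q}$ for $p+q \leq s-7$ and $m \leq \nu-2$, and invoke Lemma \ref{lemcom} with $\gamma = s-7$ to bound the commutator and nonlinear source terms other than $\mathcal{R}$. These yield expressions in $m_{\sigma,s-6}(d(\sigma))$, $m_{\sigma,2}(d(\sigma))$, $m_{\sigma,s-6}(\zeta(\sigma))$, $m_{\sigma,2}(\zeta(\sigma))$ multiplied by Sobolev norms of $\delta$, $\eta$, $g$. Combining the uniform bounds $Q_{T,s,\nu-\alpha}(g), Q_{T,s,\nu-\alpha}(\gs) \leq R$ for $\alpha \in \{0,1,2\}$ (valid since $\nu > 5/2$) with Proposition \ref{propzetaz2}, these contribute $C(R)h^2(1+(\eps+\eps h^2)R_2^2)(1+h^2 R_2)$ once the $1/\langle\sigma\rangle^2$ weight is integrated in $\sigma$, mirroring the treatment in Proposition \ref{propgsg-5}.

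The main obstacle is bounding the source contribution $\int_0^T \langle D\mathcal{R}(\sigma), D\delta(\sigma)\rangle_{L^2}\dd\sigma$: a pointwise estimate yields only $\|\mathcal{R}(\sigma)\|_{\Hc^{s-7}_{\nu-2}} = O(h)$, which is insufficient. Using the Fourier representation \eqref{Rfourier}, I Taylor expand in $\sigma - \ssf(\sigma)$ and decompose $\mathcal{R} = (\sigma - \ssf(\sigma))\mathcal{S}(\sigma) + \mathcal{R}_{\rm rem}(\sigma)$. The quadratic remainder satisfies $\|\mathcal{R}_{\rm rem}(\sigma)\|_{\Hc^{s-7}_{\nu-2}} \leq C(R)h^2/\langle\sigma\rangle^2$---the analogue of the $K_n$ estimate in Proposition \ref{propzetaz2}, which relies on the bound $\|\partial_t \gs(t)\|_{\Hc^{s-1}_\nu} \leq C(R)\langle t\rangle^2$ obtained from \eqref{julienclerc} and Lemma \ref{lemcom}, and explains the loss of one extra derivative in $s$ and one in $\nu$---so integrating against $D\delta$ yields an $O(h^2)$ contribution. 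For the linear piece $(\sigma-\ssf(\sigma))\mathcal{S}(\sigma)$, I introduce on each interval $[jh,(j+1)h]$ the antiderivative $\Psi(\sigma)=\int_{jh}^\sigma (\tau-\ssf(\tau))\dd\tau$; by Lemma \ref{midpoint}, $\Psi$ vanishes at every breakpoint and satisfies $|\Psi|\leq h^2/8$ uniformly. Integration by parts in time then converts $\int_0^T (\sigma-\ssf(\sigma))\langle D\mathcal{S}, D\delta\rangle\dd\sigma$ into boundary contributions of size $O(h^2)$ (the interior boundary terms cancel pairwise) plus $-\int_0^T \Psi(\sigma)\partial_\sigma\langle D\mathcal{S}, D\delta\rangle\dd\sigma$, which is $O(h^2)$ thanks to bounds on $\partial_\sigma\mathcal{S}$ and $\partial_\sigma\delta$ drawn from \eqref{julienclerc} and \eqref{eqdiff} combined with Lemma \ref{lemcom} and the integrable time decay provided by $Q_{T,s,\nu-2}\leq R$ and Proposition \ref{propzetaz2}.

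Assembling all contributions yields the asserted bound $\sup_{[0,T]}\|\delta(t)\|_{\Hc^{s-7}_{\nu-2}}\leq C(R)h^2(1+(\eps+\eps h^2)R_2^2)(1+h^2 R_2)$. The hardest step is the integration-by-parts argument on the first-order Taylor piece: it requires one additional time derivative of both $\gs$ and $\delta$ and is precisely what forces the degradation from $(s-6,\nu-1)$ in Proposition \ref{propgsg-5} to $(s-7,\nu-2)$ here; the remaining bookkeeping follows the Lie case essentially unchanged.
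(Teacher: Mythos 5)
Your proposal is correct and takes essentially the same approach as the paper: the same energy estimate, the same reliance on Proposition \ref{propzetaz2} and Lemma \ref{lemcom}, the same first-order Taylor decomposition of $\mathcal{R}$, and the same midpoint cancellation of Lemma \ref{midpoint}. The only cosmetic difference is that the paper first freezes $\delta(\sigma)$ at $\delta(\ssf(\sigma))$ (via a bound on $\partial_t\delta$) so that $\langle D\mathcal{R}^1(\sigma),D\delta(\ssf(\sigma))\rangle$ becomes $(\sigma-\ssf(\sigma))$ times a piecewise-constant quantity whose full-interval integrals vanish outright, whereas you realize the identical cancellation through an integration by parts against the vanishing antiderivative $\Psi$; both routes need the same derivative bounds on $\delta$ and $\gs$ and yield the same Sobolev and weight index losses.
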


\begin{proof}
We can start as in the proof of Proposition \ref{propgsg-5}.  By using the energy identity \eqref{energyid}  with $D$ the Fourier multiplier
 by $k^{p} \xi^{q} \partial_{\xi}^{m}$ for $(m,p,q)\in \mathbbm{N}^{3d}$ such that 
$p+q\leq s-7,$ and $m\leq \nu -2$
and Lemma \ref{lemcom}, we first obtain that
\begin{multline*}
  \left\| D \delta(t)\right\|_{L^2}^2 \leq C \int_{0}^t  \|D \delta (t)\|_{L^2} \Big(
   \eps (  m_{\sigma,s-6}(d(\sigma))  \| \delta (\sigma) \|_{\Hc^1_{\nu-2}} +  m_{\sigma, 2}(d(\sigma))  \| \delta (\sigma) \|_{\Hc^{s-7}_{\nu-2}})
    \\  + \eps (m_{\sigma,s-6}(\zeta(\sigma))  \| \delta(\sigma) \|_{\Hc^1_{\nu-2}} +   m_{\sigma, 2}(\zeta(\sigma)) \|\delta (\sigma)\|_{\Hc^{s-7}_{\nu-2}})
       \\ \quad \quad \quad +  m_{\sigma,s-6}(d(\sigma))  \|\eta + \eps  g(\sigma) \|_{\Hc^1_{\nu-2}}  + m_{\sigma, 2}(d(\sigma))\|\eta + \eps g(\sigma) \|_{\Hc^{s-6}_{\nu-2}}\Big) \, \dd\sigma  \\+  \left | \int_{0}^t 
        \langle D \mathcal{R}, D \delta(\sigma) \rangle_{L^2} \, \Big) \dd\sigma \right|.
  \end{multline*} 
  This yields 
   \begin{align}
   \nonumber
  \left\| D\delta(t)\right\|_{L^2}^2 & \leq  C(R) h^2 ( 1 +  ( \eps + \eps h^2)  R_{2}^2)( 1 + h^2 R_{2}) \int_{0}^t  {\| D \delta(\sigma) \|_{L^2} \over \langle \sigma \rangle^{2}}\, \dd \sigma
   +  \left | \int_{0}^t 
        \langle D \mathcal{R}, D \delta(\sigma) \rangle_{L^2} \, \dd\sigma \right| \\
 \label{strangfin1}  & \leq  C(R) h^2( 1  +  (\eps + \eps h^2)  R_{2}^2) ( 1 + h^2R_{2}) \sup_{\sigma\in[0, t]} \| \delta(\sigma) \|_{\Hc^{s-7}_{\nu-2}} + \left | \int_{0}^t 
        \langle D \mathcal{R}(\sigma), D \delta(\sigma) \rangle_{L^2} \, \dd\sigma \right| .
   \end{align}
   The main difficulty is now  to use the cancellation in the midpoint quadrature rule in order to estimate the last integral.
    Let us define
     $$ I (t)  =  \left | \int_{0}^t  \langle D \mathcal{R}(\sigma), D \delta(\sigma) \rangle_{L^2} \, \dd\sigma \right |. $$
      By Taylor expanding, integrating by parts once  and using the estimate \eqref{LieR}, we  first write that
    \begin{align*}
      I (t)  \leq   & \left | \int_{0}^t  \langle D \mathcal{R}(\sigma), D \delta(\ssf(\sigma)) \rangle_{L^2} \, \dd\sigma \right |
     + h \sup_{\sigma\in [0, T]} \| \partial_{t} \delta(\sigma) \|_{\Hc^{s-8}_{\nu-2}} \sup \int_{0}^t \| \mathcal R \|_{\Hc^{s-6}_{\nu- 2}}  \, \dd\sigma \\
     \leq   & \left | \int_{0}^t  \langle D \mathcal{R}(\sigma), D \delta(\ssf(\sigma)) \rangle_{L^2} \, \dd\sigma \right |
     + h^2 C(R) \sup_{\sigma\in [0, T]} \| \partial_{t} \delta (\sigma)\|_{\Hc^{s-8}_{\nu-2}}.
     \end{align*}
     By using the equation \eqref{eqdiff2} and Lemma \ref{lemcom}, we have that
    $$  \| \partial_{t} \delta (t) \|_{\Hc^{s-8}_{\nu-2}} 
     \leq  C m_{s-7}(d(t)) \| \eta\|_{\Hc^{s-7}_{\nu-2}} + \eps \big(  m_{s-7}(d(t)) ( \| \delta \|_{\Hc^{s-7}_{\nu-2}}  +   \| g \|_{\Hc^{s-7}_{\nu-2}})
      + m_{s-7}(\zeta) \| \delta \|_{\Hc^{s-7}_{\nu-2}} \big)$$
       and thus by using Proposition \ref{propzetaz2} we find 
       $$ \| \partial_{t} \delta (t) \|_{\Hc^{s-8}_{\nu-2}}  \leq
        C(R)h^2(1 + (\eps + \eps h^2) R_{2}^2) +  \eps  h^4 R_{2}^2.$$ 
      This yields
 $$ I (t)  \leq   \left | \int_{0}^t  \langle D \mathcal{R}(\sigma), D \delta(\ssf(\sigma)) \rangle_{L^2} \, \dd\sigma \right |
     +   C(R)h^4( 1 +  (\eps + \eps h^2) R_{2}^2).
  $$
     Next, by using the definition of $\mathcal{R}$ provided in \eqref{Rfourier}, we  observe that  by Taylor expanding in time and
      in the $\xi$ variable, we can  write
\begin{equation}
\label{Rdec}
\mathcal{R}(t,x,v)= \mathcal{R}^{1}(t,x,v) + \mathcal{R}^2(t,x,v) 
\end{equation}
      where $\mathcal{R}^1$ is given in Fourier by 
    \begin{multline*}
    \hat{\mathcal{R}}^1_{n}(t,\xi)= 
        (t- \ssf(t)) \Big( np_{n}\big( \partial_{\xi}\hat \gs_{n}(\ssf(t), n \ssf(t)) \hat \eta(\xi - n\ssf(t)) + \hat  \gs_{n}(\ssf(t), n \ssf(t))  \partial_{\xi} \hat \eta(\xi - n\ssf(t))
\big)(n \ssf  (t) - \xi )
 \\ +
 \eps \sum_{k=\pm1} k p_{k} ( \partial_{\xi} \hat \gs_{k}(\ssf(t), n \ssf(t))  \hat \gs_{n-k}(t, \xi- k \ssf(t)) +   \hat \gs_{k}(\ssf(t), n \ssf(t))  \partial_{\xi}\hat \gs_{n-k}(\ssf(t), \xi- k \ssf(t))) (n \ssf (t) - \xi) \Big)
     \end{multline*}
  and the remainder $\mathcal{R}^2$ is such that 
 $$
  \| \mathcal {R}^2 (t) \|_{\Hc^{s-7}_{\nu-2}}
   \leq C  h^2\Big( m^{(2)}_{t,s-6}(\gs) \big( \| \eta \|_{\Hc^1_{\nu}} +  \eps \|\gs \|_{\Hc^1_{\nu}} \big)+  m^{(2)}_{t,2}(\gs)  \big( \| \eta \|_{\Hc^{s-6}_{\nu}}
    +   \eps  \| \gs \|_{\Hc^{s-6}_{\nu}}\big) \Big)
 $$
     with 
     $$  m^{(2)}_{t, \gamma} (h) =\langle t \rangle^{\gamma} \sup_{n= \pm1} \sup_{ \xi \in [nt, n\ssf(t)]} \sum_{\alpha + \beta \leq 2, \, \alpha \neq 2} |\partial_{t}^\alpha \partial_{\xi}^\beta\hat h_{n} ( \xi)| .$$
     Since $\nu>5/2$, we have that 
     $$  m^{(2)}_{t,s-6}(\gs) \leq C { 1 \over \langle t \rangle^2} \| \gs \|_{\Hc^{s-4}_{\nu}} +  { 1 \over \langle t \rangle^{2}}  \| \partial_{ t}\gs \|_{\Hc^{s-4}_{\nu}} $$
      and hence thanks to the equation \eqref{julienclerc} and  Theorem \ref{maintheod}, we obtain that
      $$ m^{(2)}_{t,s-6}(\gs) \leq { C(R) \over \langle t \rangle^2}.$$
      This yields
     $$  \| \mathcal {R}^2 (t) \|_{\Hc^{s-7}_{\nu-2}}  \leq { C(R) \over \langle t \rangle^2},$$
     and hence 
    \begin{equation}
    \label{strangI}
    I (t)  \leq   \left | \int_{0}^t  \langle D \mathcal{R}^1, D \delta(\ssf(\sigma)) \rangle_{L^2} \, \dd\sigma \right |
     +  C(R) h^4( 1 +  (\eps + \eps h^2) R_{2}^2) + C(R) h^2  \sup_{\sigma \in[0, t]} \| \delta(\sigma) \|_{\Hc^{s-7}_{\nu-2}}.
     \end{equation}
      Thanks to the definition of $\mathcal{R}^1$,   assuming that $Nh \leq t \leq (N+1)h$, we obtain that
      $$  \left | \int_{0}^t  \langle D \mathcal{R}^1(\sigma), D \delta(\ssf(\sigma)) \rangle_{L^2} \, d\sigma \right | 
      =   \left | \int_{Nh}^t  \langle D \mathcal{R}^1(\sigma), D \delta(\ssf(\sigma)) \rangle_{L^2} \, d\sigma \right |
       \leq C(R) h^2  \sup_{\sigma \in[0, t]} \| \delta(\sigma) \|_{\Hc^{s-7}_{\nu-2}}.$$
       Here we have also used the cancellation argument of Lemma \ref{midpoint}.  
       Consequently, from \eqref{strangfin1}, \eqref{strangI} and the above estimate, we obtain that
      \begin{multline*} \sup_{\sigma\in[0, T]} \| \delta(\sigma) \|_{\Hc^{s-7}_{\nu-2}}^2 \leq 
        C(R)h^4( 1 +  (\eps + \eps h^2) R_{2}^2) ( 1 + h^2R_{2}) \\
        + C(R) h^2( 1  +  (\eps + \eps h^2)  R_{2}^2) ( 1 + h^2R_{2}) \sup_{\sigma\in[0, T]} \| \delta(\sigma) \|_{\Hc^{s-7}_{\nu-2}}.
        \end{multline*}
         By using the Young inequality this yields
         $$   \sup_{\sigma\in[0, T]} \| \delta(\sigma) \|_{\Hc^{s-7}_{\nu-2}}
          \leq    C(R)h^2( 1 +  (\eps + \eps h^2) R_{2}^2) ( 1 + h^2 R_{2}).
         $$
           This ends the  proof.

\end{proof}

\subsection{Estimate of $N_{T, s-3, \nu-2}(\delta)$}
\begin{proposition}
\label{propgsg2}
Assuming $\eta\in \Hc^{s+4}$ satisfies the assumption {\bf{(H)}}, there exists $C>0,$ $\eps_{0} >0,$ and $h_{0}$ such that for every $h\in(0,h_{0}],$ every $\eps \in (0,\eps_{0}]$ and every $T>0,$ every solution of \eqref{eqdiff} such that $Q_{T, s-3, \nu-2}(\delta) \leq h^2 R_{2}$ enjoys the estimate  
$$ N_{T, s-3, \nu-2}(\delta) \leq  C(R)h^2(1 +  (\eps + \eps h^2) R_{2}^2) ( 1 + h^2 R_{2}).$$
\end{proposition}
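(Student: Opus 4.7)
The plan is to adapt the argument of Proposition \ref{propgsg-7} to the higher regularity norm, keeping track of the additional $\langle t \rangle^3$ time weight in the definition of $N_{T,s-3,\nu-2}$. First, I apply the energy identity \eqref{energyid} with $D=D^{m,p,q}$ being the Fourier multiplier $k^p\xi^q\partial_\xi^m$ for $p+q\leq s-3$ and $m\leq \nu-2$, which by summation corresponds exactly to controlling $\|\delta(t)\|_{\Hc^{s-3}_{\nu-2}}$. Using Lemma \ref{lemcom} with $\gamma=s-3$, and the uniform bound $Q_{T,s,\nu-\alpha}(g), Q_{T,s,\nu-\alpha}(\gs)\leq R$ for $\alpha\leq 2$, together with the estimate $M_{T,s-4}(d)\leq C(R)h^2(1+(\eps+\eps h^2)R_2^2)$ from Proposition \ref{propzetaz2}, I control all the commutator and source terms (except the one involving $\mathcal{R}$) by quantities of the form $C(R)h^2(1+(\eps+\eps h^2)R_2^2)(1+h^2 R_2)\langle t\rangle^3/\langle \sigma\rangle^2$, which integrate in time to produce a bound in $\langle t\rangle^3$.

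The heart of the proof, as in Proposition \ref{propgsg-7}, is the integral
\[
I(t)=\left|\int_0^t \langle D\mathcal{R}(\sigma), D\delta(\sigma)\rangle_{L^2}\,\dd\sigma\right|,
\]
where the cancellation provided by the midpoint rule (Lemma \ref{midpoint}) must be extracted. I first Taylor expand to replace $D\delta(\sigma)$ by $D\delta(\ssf(\sigma))$, paying a remainder of order $h\sup_{[0,T]}\|\partial_t\delta\|_{\Hc^{s-4}_{\nu-2}}$ times $\int_0^t \|\mathcal{R}\|_{\Hc^{s-2}_{\nu-2}}$. Using \eqref{eqdiff2}, Lemma \ref{lemcom}, and Proposition \ref{propzetaz2}, I bound $\|\partial_t\delta(t)\|_{\Hc^{s-4}_{\nu-2}}\leq C(R)h^2(1+(\eps+\eps h^2)R_2^2)\langle t\rangle^3$, and the integral of $\|\mathcal{R}\|$ analogous to \eqref{LieR2} gives $C(R)h\langle t\rangle^3$, so this remainder contributes $C(R)h^4(1+(\eps+\eps h^2)R_2^2)\langle t\rangle^6$, compatible with the target estimate.

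Next I use the decomposition \eqref{Rdec}: the remainder $\mathcal{R}^2$ is controlled pointwise by $C(R)\langle t\rangle^{-2}$ (following the argument below \eqref{Rdec} but now yielding growth $\langle t\rangle^{s-3-s+4}$ in the Sobolev norm), contributing $O(h^4\langle t\rangle^6)$ after pairing with $D\delta$ and integrating. The principal part $\mathcal{R}^1$ carries the factor $\sigma-\ssf(\sigma)$, and on each subinterval $[jh,(j+1)h]$ the function $D\delta(\ssf(\sigma))$ is constant (equal to $D\delta((j+\tfrac12)h)$); by Lemma \ref{midpoint} the integral $\int_{jh}^{(j+1)h}(\sigma-\ssf(\sigma))\,\dd\sigma=0$, so all the remaining coefficients of $\mathcal{R}^1$ must themselves be Taylor expanded to first order in $\sigma$ around $(j+\tfrac12)h$ to generate an extra $h$, after which the cancellation leaves only the incomplete interval $[Nh,t]$ (where $Nh\leq t<(N+1)h$), which is $O(h^2)$ in length and contributes $C(R)h^2\langle t\rangle^3 \sup\|\delta\|_{\Hc^{s-3}_{\nu-2}}$.

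Collecting everything, I obtain
\[
\sup_{t\in[0,T]}\langle t\rangle^{-6}\|\delta(t)\|_{\Hc^{s-3}_{\nu-2}}^2
\leq C(R)h^4\bigl(1+(\eps+\eps h^2)R_2^2\bigr)^2(1+h^2R_2)^2
+C(R)h^2\bigl(\cdots\bigr)\sup_{[0,T]}\langle t\rangle^{-3}\|\delta\|_{\Hc^{s-3}_{\nu-2}},
\]
and Young's inequality closes the estimate to yield the claimed bound on $N_{T,s-3,\nu-2}(\delta)$. The main obstacle I anticipate is bookkeeping the $\langle t\rangle^3$ growth uniformly in $T$ through all the commutator estimates and, in particular, controlling $\partial_t \delta$ in $\Hc^{s-4}_{\nu-2}$ at the correct order in $h$, since it feeds back into the Taylor remainder and is what forces the shift from the regularity level $s-7$ used in Proposition \ref{propgsg-7} to $s-3$ here.
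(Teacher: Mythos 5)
Your proof is correct and takes essentially the same route as the paper: the energy identity with the level-$(s-3,\nu-2)$ Fourier multiplier, Lemma \ref{lemcom} together with Proposition \ref{propzetaz2}, the Taylor replacement $D\delta(\sigma)\mapsto D\delta(\ssf(\sigma))$, the decomposition $\mathcal R=\mathcal R^1+\mathcal R^2$ with the midpoint cancellation (Lemma \ref{midpoint}) on $\mathcal R^1$, and Young's inequality to close. Two minor corrections that do not affect the conclusion: in the paper's decomposition the coefficients of $\mathcal R^1$ are already evaluated at $\ssf(\sigma)$ (in both the time and Fourier arguments) and are therefore constant on each cell $[jh,(j+1)h]$, so the cancellation from Lemma \ref{midpoint} is exact and the extra Taylor expansion of those coefficients that you propose is unnecessary (only the incomplete interval $[Nh,t]$ of length $O(h)$ survives, giving an $O(h^2)\sup\|\delta\|_{\Hc^{s-3}_{\nu-2}}$ contribution); and $\|\mathcal R^2(t)\|_{\Hc^{s-3}_{\nu-2}}$ is bounded by $C(R)h^2\langle t\rangle^{2}$, not $C(R)\langle t\rangle^{-2}$, though your resulting $O(h^4\langle t\rangle^6)$ estimate is the right one.
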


\begin{proof}

We can start once more as in the proof of Proposition \ref{propgsg-5}.  By using the energy identity \eqref{energyid}  with $D$ the Fourier multiplier
 by $k^{p} \xi^{q} \partial_{\xi}^{m}$ for $(m,p,q)\in \mathbbm{N}^{3d}$ such that 
$p+q\leq s-3,$ and $m\leq \nu -2$
and Lemma \ref{lemcom}, we first obtain that
\begin{multline*}
  \left\| D \delta(t)\right\|_{L^2}^2 \leq C \int_{0}^t  \|D \delta (t)\|_{L^2} \Big(
   \eps (  m_{\sigma,s-2}(d(\sigma))  \| \delta (\sigma) \|_{\Hc^1_{\nu-2}} +  m_{\sigma, 2}(d(\sigma))  \| \delta (\sigma) \|_{\Hc^{s-3}_{\nu-2}})
    \\  + \eps (m_{\sigma,s-2}(\zeta(\sigma))  \| \delta(\sigma) \|_{\Hc^1_{\nu-2}} +   m_{\sigma, 2}(\zeta(\sigma)) \|\delta (\sigma)\|_{\Hc^{s-3}_{\nu-2}})
       \\ \quad \quad \quad +  m_{\sigma,s-2}(d(\sigma))  \|\eta + \eps  g(\sigma) \|_{\Hc^1_{\nu-2}}  + m_{\sigma, 2}(d(\sigma))\|\eta + \eps g(\sigma) \|_{\Hc^{s-3}_{\nu-2}}\Big) \, \dd\sigma  \\+  \left | \int_{0}^t 
        \langle D \mathcal{R}, D \delta(\sigma) \rangle_{L^2} \, \Big) \dd\sigma \right|.
  \end{multline*} 
  This yields 
   \begin{align}
   \nonumber
  \left\| D\delta(t)\right\|_{L^2}^2 & \leq  C(R)^{2} h^2 ( 1+  ( \eps + \eps h^2)  R_{2}^2)( 1 + h^2 R_{2}) \int_{0}^t \langle \sigma \rangle^{2}  \| D \delta(\sigma) \|_{L^2} \, \dd \sigma
   +  \left | \int_{0}^t 
        \langle D \mathcal{R}, D \delta(\sigma) \rangle_{L^2} \, \dd\sigma \right| \\
 \label{strangfin2}  & \leq  C(R)^{2} \langle t \rangle^{6} h^4( 1  +  (\eps + \eps h^2)  R_{2}^2)^{2} ( 1 + h^2R_{2})^{2}+ \left | \int_{0}^t 
        \langle D \mathcal{R}(\sigma), D \delta(\sigma) \rangle_{L^2} \, \dd\sigma \right| .
   \end{align}
  As in the previous proof we consider
     $$ I (t)  =  \left | \int_{0}^t  \langle D \mathcal{R}(\sigma), D \delta(\sigma) \rangle_{L^2} \, \dd\sigma \right |, $$
      and obtain as previously
    \begin{align*}
      I (t)  \leq   & \left | \int_{0}^t  \langle D \mathcal{R}(\sigma), D \delta(\ssf(\sigma)) \rangle_{L^2} \, \dd\sigma \right |
     + h \sup_{\sigma\in [0, T]} \| \partial_{t} \delta(\sigma) \|_{\Hc^{s-4}_{\nu-2}} \sup \int_{0}^t \| \mathcal R \|_{\Hc^{s-2}_{\nu- 2}}  \, \dd\sigma \\
     \leq   & \left | \int_{0}^t  \langle D \mathcal{R}(\sigma), D \delta(\ssf(\sigma)) \rangle_{L^2} \, \dd\sigma \right |
     + h^2 C(R)\langle t \rangle^{3} \sup_{\sigma\in [0, T]} \| \partial_{t} \delta (\sigma)\|_{\Hc^{s-4}_{\nu-2}}.
     \end{align*}
     Here we have used estimate \eqref{LieR2}. 
     By using the equation \eqref{eqdiff2} and Lemma \ref{lemcom}, we have that
    $$  \| \partial_{t} \delta (t) \|_{\Hc^{s-4}_{\nu-2}} 
     \leq  C m_{s-3}(d(t)) \| \eta\|_{\Hc^{s-3}_{\nu-2}} + \eps \big(  m_{s-3}(d(t)) ( \| \delta \|_{\Hc^{s-3}_{\nu-2}}  +   \| g \|_{\Hc^{s-3}_{\nu-2}})
      + m_{s-3}(\zeta) \| \delta \|_{\Hc^{s-3}_{\nu-2}} \big)$$
       and thus by using Proposition \ref{propzetaz2} we find 
       $$ \| \partial_{t} \delta (t) \|_{\Hc^{s-4}_{\nu-2}}  \leq
        C(R)\langle t \rangle ^{3}h^2(1 + (\eps + \eps h^2) R_{2}^2) +   \langle t \rangle ^{3} \eps  h^4 R_{2}^2.$$ 
      This yields
 $$ I (t)  \leq   \left | \int_{0}^t  \langle D \mathcal{R}(\sigma), D \delta(\ssf(\sigma)) \rangle_{L^2} \, \dd\sigma \right |
     +  \langle t \rangle^{6} C(R)^{2}h^4( 1 +  (\eps + \eps h^2) R_{2}^2)^{2}.
  $$
    As in the previous proof, we write
     $$ \mathcal{R}(t,x,v)= \mathcal{R}^{1}(t,x,v) + \mathcal{R}^2(t,x,v) $$
      where $\mathcal{R}^1$ is given in Fourier by 
    \begin{multline*}
    \hat{R}^1_{n}(t,\xi)= 
        (t- \ssf(t)) \Big( np_{n}\big( \partial_{\xi}\hat \gs_{n}(\ssf(t), n \ssf(t)) \hat \eta(\xi - n\ssf(t)) + \hat  \gs_{n}(\ssf(t), n \ssf(t))  \partial_{\xi} \hat \eta(\xi - n\ssf(t))
\big)(n \ssf  (t) - \xi )
 \\ +
 \eps \sum_{k} k p_{k} ( \partial_{\xi} \hat \gs_{k}(\ssf(t), n \ssf(t))  \hat \gs_{n-k}(t, \xi- k \ssf(t)) +   \hat \gs_{k}(\ssf(t), n \ssf(t))  \partial_{\xi}\hat \gs_{n-k}(\ssf(t), \xi- k \ssf(t))) (n \ssf (t) - \xi) \Big)
     \end{multline*}
  and the remainder $\mathcal{R}^2$ is such that 
 $$
  \| \mathcal {R}^2 (t) \|_{\Hc^{s-3}_{\nu-2}}
   \leq C  h^2\Big( m^{(2)}_{t,s-2}(\gs) \big( \| \eta \|_{\Hc^1_{\nu}} +  \eps \|\gs \|_{\Hc^1_{\nu}} \big)+  m^{(2)}_{t,2}(\gs)  \big( \| \eta \|_{\Hc^{s-2}_{\nu}}
    +   \eps  \| \gs \|_{\Hc^{s-2}_{\nu}}\big) \Big)
 $$
     with 
     $$  m^{(2)}_{t, \gamma} (h) =\langle t \rangle^{\gamma} \sup_{n= \pm1} \sup_{ \xi \in [nt, n\ssf(t)]} \sum_{\alpha + \beta \leq 2, \, \alpha \neq 2} |\partial_{t}^\alpha \partial_{\xi}^\beta\hat h_{n} ( \xi)| .$$
     Since $\nu>5/2$, we have that 
     $$  m^{(2)}_{t,s-2}(\gs) \leq C \langle t \rangle^2 \| \gs \|_{\Hc^{s-4}_{\nu}} +  \langle t \rangle^{2}  \| \partial_{ t}\gs \|_{\Hc^{s-4}_{\nu}}.$$
      and hence thanks to the equation \eqref{julienclerc} and  Theorem \ref{maintheod}, we obtain that
      $$ m^{(2)}_{t,s-2}(\gs)  C(R)  \langle t \rangle^2$$
      This yields
     $$  \| \mathcal {R}^2 (t) \|_{\Hc^{s-3}_{\nu-2}}  \leq C(R)  \langle t \rangle^2$$
     and hence 
    \begin{equation}
    \label{strangII}
    I (t)  \leq   \left | \int_{0}^t  \langle D \mathcal{R}^1, D \delta(\ssf(\sigma)) \rangle_{L^2} \, \dd\sigma \right |
     +  C(R)^{2} \langle t \rangle^{6} h^4( 1 +  (\eps + \eps h^2) R_{2}^2)^{2} + C(R)\langle t \rangle^{3} h^2  \sup_{[0, t]} \| \delta \|_{\Hc^{s-3}_{\nu-2}}.
     \end{equation}
      Thanks to the definition of $\mathcal{R}^1$,   assuming that $Nh \leq t \leq (N+1)h$, we obtain that
      $$  \left | \int_{0}^t  \langle D \mathcal{R}^1(\sigma), D \delta(\ssf(\sigma)) \rangle_{L^2} \, \dd\sigma \right | 
      =   \left | \int_{Nh}^t  \langle D \mathcal{R}^1(\sigma), D \delta(\ssf(\sigma)) \rangle_{L^2} \, \dd\sigma \right |
       \leq C(R) \langle t \rangle^{3} h^2  \sup_{[0, t]} \| \delta \|_{\Hc^{s-3}_{\nu-2}}.$$
     Once more we have used the cancellation argument (see lemma \ref{midpoint}).
       Consequently, from \eqref{strangfin2}, \eqref{strangII} and the above estimate, we obtain that
      \begin{multline*} \sup_{[0, T]} \| \delta \|_{\Hc^{s-3}_{\nu-2}}^2 \leq 
        C(R)^{2} \langle t \rangle ^{6} h^4(  1 +  (\eps + \eps h^2) R_{2}^2)^{2} \\
        + C(R) \langle t \rangle^{3} h^2( 1  +  (\eps + \eps h^2)  R_{2}^2) ( 1 + h^2R_{2}) \sup_{[0, T]} \| \delta \|_{\Hc^{s-3}_{\nu-2}}.
        \end{multline*}
         By using the Young inequality this yields
         $$   \sup_{t\in [0, T]} \langle t \rangle^{-3} \| \delta \|_{\Hc^{s-3}_{\nu-2}}
          \leq    C(R)h^2( 1 +  (\eps + \eps h^2) R_{2}^2) ( 1 + h^2 R_{2}).
         $$
           This ends the  proof.

\end{proof}

\subsection{Proof of proposition \ref{ordre2}}

Using Propositions \ref{propzetaz2}, \ref{propgsg-7} and \ref{propgsg2}, we have proven that 
$$ Q_{T, s-3, \nu-2}(\delta) \leq   C(R)h^2(  1 +  (\eps + \eps h^2) R_{2}^2) ( 1 + h^2 R_{2})$$
 if  $Q_{T, s-3, \nu-2}(\delta) \leq h^2 R_{2}$.
 We can thus obtain 
 Proposition \ref{ordre2} with the same bootstrap argument as before by choosing $R_{2}>C(R)$ and
  then  by taking $\eps$ and $h$ sufficiently small.

\section*{appendix}

 \subsection*{Proof of Lemma \ref{lemvolterra}} Let us first note that the equation \eqref{beethov} only involves $K(n,t)$ for positive $t$, hence $K(n,t)$ can be replaced here by $K_1(n,t)$ (see \eqref{kernel}). Let us  take $T>0$, and let us set for the purpose of the proof $K(t)= K_1(n,t)$, 
  $ F(t) = ( F_{n}(t) + G_n(t))  \mathds{1}_{ 0 \leq t  \leq T}. $ Since we  only consider the cases $n= \pm 1$, we do not write down anymore explicitly the dependence in $n$.  We consider the equation
 \begin{equation}
 \label{eqvolterra2}
   y(t)= K *  y (t)  +  F(t), \quad t \in \mathbb{R}
 \end{equation}
setting $y(t) = 0$ for $t \leq 0$. 
 Note that the solution of this equation coincides with $ \zeta_n(t)$  on $[0, T]$ since the modification of the source term for $t \geq T$ does not affect the past. 
     By taking the Fourier transform in $t$ (that we still denote by $\hat \cdot$\, ), we obtain
 \begin{equation}
 \label{voltfourier}  \hat y (\tau)=  \hat{K}(\tau) \hat y (\tau) + \hat F(\tau), \quad \tau \in \R, 
 \end{equation}
with  $ \hat{K}(\tau)= \hat{K}(n, \tau)$. Under the assumption ${\bf (H)}$, the solution of \eqref{voltfourier} is given explicitely by the formula
   \begin{equation}
   \label{vol2} \hat{y}(\tau)= { \hat{F}(\tau) \over 1 - \hat{K}(\tau)}.
   \end{equation} 
  Let us observe that  since $( 1 + v^2) \eta_{0} \in \Hc^5$,   we have by \eqref{emb1} that for $\alpha \leq 2$ and  for $t > 0$
    \begin{equation}
    \label{decvol2}| \partial_{t}^\alpha  K(t)   | \leq {C \over \langle t \rangle^4}    \in L^1(\mathbb{R}_{+}).
    \end{equation}
    Note that by definition of $K(t)$, the function $K(t)$ is continuous in $t = 0$, but not $C^1$. 
     Using an integration by parts on the definition of the Fourier transform, we  then get 
      that
     \begin{equation}
    \label{vol1}
      |  \partial_{\tau}^\alpha \hat{K}(\tau) | \leq { C \over \langle \tau \rangle^2}, \quad  \alpha  \leq 2.
      \end{equation}
 To get this, we  have used that the function $t\, \hat \eta_{0}(t)$ vanishes at zero. 
 
  By using this estimate on $\hat{K}$,  {\bf (H)}  and that  $\hat F(\tau) \in H_\tau^{1}$ (the Sobolev space in $\tau$) since $F$ is compactly supported in time, we easily
   get that $ y$ defined via its Fourier transform  by  \eqref{vol2}  belongs to $H_{\tau}^1$. This  implies that $\langle t \rangle y \in L^2$ and thus that $y \in L^1_{t}$.
  These remarks justify the  use  of the Fourier transform and that the function $y$ defined through its Fourier transform via \eqref{vol2}
   is a solution of  \eqref{eqvolterra2}. Moreover, thanks to \eqref{vol2} and {\bf (H)}, we  get that $\hat y$ can be continued 
    as an holomorphic function  in $\mathrm{Im}\, \tau \leq 0$ and  thanks to a Paley Wiener type  argument, that $y$ vanishes for $t \leq 0$.  We have thus obtained  an $L^1$ solution
     of \eqref{eqvolterra2} that vanishes  for $t \leq 0$. By a Gronwall type argument, we easily get  that there is a unique solution in this class  of \eqref{eqvolterra2} and thus
     we have obtained the expression of the unique solution.    
 
  We can thus now focus on the proof of the estimate stated in Lemma \ref{lemvolterra}.
      Note that  a $L^2$-based version of this estimate would be very easily obtained.   
The difficulty here is  to get the uniform   $L^\infty$ in  time  estimate we want to prove. 

\medskip 

   We shall first prove the estimate for $\gamma =0$.
Let us  take $\chi(\tau) \in [0, 1]$ a smooth compactly supported function  that  vanishes for $| \tau | \geq 1$ and which is equal to one for $|\tau| \leq 1/2$.   We  define $\chi_{R}(\tau)= \chi(\tau/R)$ and $\chi_R(\partial_t)$ the corresponding operator in $t$ variable corresponding to the convolution with the inverse Fourier transform of $\chi_R(\tau)$. 
  Thanks to \eqref{vol1}, we have that for $R$ large
 $$ \langle t \rangle^2 | (1- \chi_{R}(\partial_{t})) K (t) |  \leq C  \sum_{\alpha \leq 2}\|  \partial_{\tau}^\alpha ( ( 1 - \chi_{R}(\tau)) \hat{K}(\tau)) \|_{L^1(\R)}
  \leq C \int_{|\tau| \geq R/2} { 1 \over \langle \tau \rangle^2} \leq  { C \over R} $$
  and hence
  \begin{equation}
  \label{vol2'}
   \|  (1- \chi_{R}(\partial_{t} ) ) K(t) \|_{L^1(\R)} \leq {C \over R} \leq { 1 \over 2}
  \end{equation}
   for $R$ sufficiently large. This choice fixes $R$.
   
   To estimate the solution $y$ of \eqref{eqvolterra2}, we shall write that
    $$y=  \chi_{2R} (\partial_{t}) y + ( 1 - \chi_{2R}(\partial_{t}))y   = : y^l + y^h.$$
    By applying $ ( 1- \chi_{2R}(\partial_t)) $ to \eqref{eqvolterra2}, we get that
    $$
     y^h = K * y^h + ( 1 - \chi_{2R}(\partial_{t}) )  F =\big( ( 1 - \chi_{R}(\partial_{t}) K \big) * y^h +   ( 1 - \chi_{2R}(\partial_{t})  )F
    $$
     since $(1- \chi_{R})= 1$ on  the support of $1- \chi_{2R}$. Therefore, we obtain thanks to \eqref{vol2'} and the fact that $\chi_{2R}(\partial_t)$ is a convolution operator with a $L^1$ function,  that
     $$ \|y^h \|_{L^\infty} \leq  { 1 \over 2} \| y^h \|_{L^\infty} +  C \| F \|_{L^\infty}$$
      and hence
      $$ \|y^h \|_{L^\infty} \leq   2 C \|F\|_{L^\infty}.$$
       For the low frequencies, we can use directly the form \eqref{eqvolterra2} of the equation: We can write that
       $$  \hat y^l(\tau)= { \chi_{2R}(\tau)   \over 1 - \hat{K}(\tau)} \chi_{R}(\tau) \hat F(\tau).$$
        Since the denominator does not vanish thanks to  ${\bf(H) }$, we obtain again  that $y^l$ can be written as the convolution of an $L^1$ function - which is the inverse Fourier transform of $ \chi_{2R}(\tau)/( 1-  \hat{K}(\tau))$ -  by the function
         $\chi_{R}(\partial_{t}) F$ which is a convolution of $F$ by a smooth function. Thus we obtain by using again the Young inequality  that
         $$  \| y^l \|_{L^\infty} \leq  C \|F\|_{L^\infty}.$$
          Since 
          $ \|y \|_{L^\infty} \leq \| y^l \|_{L^\infty} + \|y^h\|_{L^\infty},$
           we get the  desired estimate  for $\gamma=0$.    
    To get the estimate for arbitrary $\gamma$, we can proceed by induction.  We observe that
    $$ t y(t) =  K * (t y) +  F^1$$
     with $F^1= (tK) * y + t  F$. 
     Using the result $\gamma = 0$, we obtain that $\| t  y \|_{L^\infty} \leq C \|F^1 \|_{L^\infty}$. Now 
      since $\eta_{0} \in \Hc^{\gamma + 3}$,  for $\gamma =1$, we obtain that $ tK  \in L^1$ and thus 
      $$  \|F^1 \|_{L^\infty} \leq C\big( \| t F\|_{L^\infty} +  \|y\|_{L^\infty}) \leq C \| ( 1 + t ) F\|_{L^\infty}.$$
       The higher order estimates follow easily in the same way.     

\subsection*{Proof of Lemma \ref{lemcom}}
 We give the proof of \eqref{com1}, the proof of the second estimate  being  slightly easier. In the Fourier side, we have   for $\mathcal{L}_{\sigma}[\gs] (h)$ the expression
$$
 (  \mathcal{F}{\Lc_\sigma[\gs] h})_n(\xi) = \sum_{k \in \{ \pm 1\} }  kp_k  \zs_k(\sigma)  \hat h_{n-k}(\sigma,\xi - k\ssf(\sigma)) (  n  \ssf(\sigma) -  \xi).
$$
  Consequently, we obtain that
  \begin{multline*} \big(  \mathcal{F}  ( [D^{r,p,q},  {\Lc_\sigma[\gs]  }]h ) \big))_n(\xi) = \\
    \sum_{k \in \{ \pm 1\} }
      kp_k  \zs_k(\sigma) \Big( n^p \xi^q  \partial^r_\xi \big( \hat h_{n-k}(\sigma,\xi - k\ssf(\sigma)) (  n \ssf(\sigma)-  \xi) \big) -  \\
        \big( (n-k)^p  (\xi-  \ssf(\sigma))^q   \partial_{\xi}^r\hat h_{n-k} ( \sigma,\xi - k\ssf(\sigma)) (  n \ssf(\sigma) -  \xi) \big)  \Big).
       \end{multline*}
For $k =\pm 1$, we can thus expand the above expression into a finite sum of terms under the form
$$
 I_{n}^k(\sigma, \xi)=  k p_{k} \zs_{k} (\ssf(\sigma))  k^{p_{1}}  (n-k)^{ p - p_{1} + \alpha }  \big(k \ssf(\sigma) \big)^{q_{1}+ \alpha} (\xi - k\ssf(\sigma))^{q - q_{1} + \beta}
 \partial_{\xi}^{r_{1}} \hat h_{n-k}(\sigma,\xi - k\ssf(\sigma))
$$
where 
$$0 \leq   p_{1} \leq p, \,    0  \leq  q_{1} \leq q ,  \quad  m-1 \leq r_{1} \leq m, \quad   \alpha + \beta =  r_{1} - m  + 1, \, \alpha, \, \beta \geq 0.$$
Moreover, if $r_{1}=r$, then we have $p_{1}+ q_{1}>0$.

 We have to estimate
 $ \sum_{n} \int_{\xi}  |\sum_{k \in \pm 1}I_{n}^k (\sigma, \xi) |^2 \, d\xi$ by isometry of the Fourier transform.

 We note that for a fixed $k \in \{\pm 1\}$ then for $|n - k| + | \xi - k \ssf(\sigma)| \leq |k| \ssf(\sigma)$, we have 
 $$ | I_{n}^k (\sigma, \xi) | \leq  C \ssf(\sigma)^{ p + q + 1 } |\zs_{k}(\ssf(\sigma))| |n-k|| \partial_{\xi}^{r_1} \hat h_{n-k} (\sigma, \xi - k \ssf(\sigma)) |$$
  whereas for $|n-k| + | \xi- k \ssf(\sigma) | \geq  |k| \ssf(\sigma)$, we have
  $$| I_{n}^k (\sigma, \xi) |  \leq  C  \langle \sigma \rangle^2 |\zs_{k}(\ssf(\sigma))| (|n-k| + | \xi- k \ssf(\sigma) |)^\gamma | \partial_{\xi}^{r_1} \hat h_{n-k} (\sigma, \xi - k \ssf(\sigma)) |. $$ 
 
  Consequently  by taking the $L^2$ norm, we find that
 $$ \| \sum_{k \in \pm 1}I_{n}^k(\sigma,\xi) \|_{L^2} \leq  C \big( m_{\sigma, \gamma + 1}(\zs)  \|h(\sigma) \|_{\Hc^1} +  m_{\sigma, 2}(\zs) \|h(\sigma)\|_{\Hc^m}\big).$$
 This ends the proof of the  Lemma.

\end{document}